\begin{document}
\numberwithin{equation}{section}

\def\1#1{\overline{#1}}
\def\2#1{\widetilde{#1}}
\def\3#1{\widehat{#1}}
\def\4#1{\mathbb{#1}}
\def\5#1{\frak{#1}}
\def\6#1{{\mathcal{#1}}}

\newcommand{\UH}{\mathbb{H}}
\newcommand{\de}{\partial}
\newcommand{\R}{\mathbb R}
\newcommand{\Ha}{\mathbb H}
\newcommand{\al}{\alpha}
\newcommand{\tr}{\widetilde{\rho}}
\newcommand{\tz}{\widetilde{\zeta}}
\newcommand{\tk}{\widetilde{C}}
\newcommand{\tv}{\widetilde{\varphi}}
\newcommand{\hv}{\hat{\varphi}}
\newcommand{\tu}{\tilde{u}}
\newcommand{\tF}{\tilde{F}}
\newcommand{\debar}{\overline{\de}}
\newcommand{\Z}{\mathbb Z}
\newcommand{\C}{\mathbb C}
\newcommand{\Po}{\mathbb P}
\newcommand{\zbar}{\overline{z}}
\newcommand{\G}{\mathcal{G}}
\newcommand{\So}{\mathcal{S}}
\newcommand{\Ko}{\mathcal{K}}
\newcommand{\U}{\mathcal{U}}
\newcommand{\B}{\mathbb B}
\newcommand{\oB}{\overline{\mathbb B}}
\newcommand{\Cur}{\mathcal D}
\newcommand{\Dis}{\mathcal Dis}
\newcommand{\Levi}{\mathcal L}
\newcommand{\SP}{\mathcal SP}
\newcommand{\Sp}{\mathcal Q}
\newcommand{\A}{\mathcal O^{k+\alpha}(\overline{\mathbb D},\C^n)}
\newcommand{\CA}{\mathcal C^{k+\alpha}(\de{\mathbb D},\C^n)}
\newcommand{\Ma}{\mathcal M}
\newcommand{\Ac}{\mathcal O^{k+\alpha}(\overline{\mathbb D},\C^{n}\times\C^{n-1})}
\newcommand{\Acc}{\mathcal O^{k-1+\alpha}(\overline{\mathbb D},\C)}
\newcommand{\Acr}{\mathcal O^{k+\alpha}(\overline{\mathbb D},\R^{n})}
\newcommand{\Co}{\mathcal C}
\newcommand{\Hol}{{\sf Hol}}
\newcommand{\Aut}{{\sf Aut}(\mathbb D)}
\newcommand{\D}{\mathbb D}
\newcommand{\oD}{\overline{\mathbb D}}
\newcommand{\oX}{\overline{X}}
\newcommand{\loc}{L^1_{\rm{loc}}}
\newcommand{\la}{\langle}
\newcommand{\ra}{\rangle}
\newcommand{\thh}{\tilde{h}}
\newcommand{\N}{\mathbb N}
\newcommand{\kd}{\kappa_D}
\newcommand{\Hr}{\mathbb H}
\newcommand{\ps}{{\sf Psh}}
\newcommand{\Hess}{{\sf Hess}}
\newcommand{\subh}{{\sf subh}}
\newcommand{\harm}{{\sf harm}}
\newcommand{\ph}{{\sf Ph}}
\newcommand{\tl}{\tilde{\lambda}}
\newcommand{\gdot}{\stackrel{\cdot}{g}}
\newcommand{\gddot}{\stackrel{\cdot\cdot}{g}}
\newcommand{\fdot}{\stackrel{\cdot}{f}}
\newcommand{\fddot}{\stackrel{\cdot\cdot}{f}}

\def\Re{{\sf Re}\,}
\def\Im{{\sf Im}\,}

\newcommand{\Real}{\mathbb{R}}
\newcommand{\Natural}{\mathbb{N}}
\newcommand{\Complex}{\mathbb{C}}
\newcommand{\ComplexE}{\overline{\mathbb{C}}}
\newcommand{\Int}{\mathbb{Z}}
\newcommand{\UD}{\mathbb{D}}
\newcommand{\clS}{\mathcal{S}}
\newcommand{\gtz}{\ge0}
\newcommand{\gt}{\ge}
\newcommand{\lt}{\le}
\newcommand{\fami}[1]{(#1_{s,t})}
\newcommand{\famc}[1]{(#1_t)}
\newcommand{\ts}{t\gt s\gtz}
\newcommand{\classCC}{\tilde{\mathcal C}}
\newcommand{\classS}{\mathcal S}

\newcommand{\Unit}[1]{\begin{trivlist}\item\noindent{#1}}
\newcommand{\Step}[1]{\Unit{\bf Step~#1.}}
\newcommand{\step}[1]{\Unit{\underline{Step~#1}.}}
\def\endunit{\end{trivlist}}
\let\endstep=\endunit
\newcommand{\proofbox}{\hfill$\Box$}
\newcommand{\Case}[1]{\Unit{\it Case #1.}}

\newcommand{\mcite}[1]{\csname b@#1\endcsname}
\newcommand{\UC}{\mathbb{T}}

\newcommand{\Moeb}{\mathrm{M\ddot ob}}

\newcommand{\dAlg}{{\mathcal A}(\UD)}
\newcommand{\diam}{\mathrm{diam}}

\theoremstyle{theorem}
\newtheorem {result} {Theorem}
\setcounter {result} {64}
 \renewcommand{\theresult}{\char\arabic{result}}



\newcommand{\Spec}{\Lambda^d}
\newcommand{\SpecR}{\Lambda^d_R}
\newcommand{\Prend}{\mathrm P}
\newcommand{\classM}{\mathbb M}
\newcommand{\Log}{\mathop{\mathrm{Log}}}

\def\ch{\mathop{\mathrm{ch}}}
\def\th{\mathop{\mathrm{th}}}
\def\sh{\mathop{\mathrm{sh}}}
\def\tg{\mathop{\mathrm{tg}}}
\newcommand{\CarClass}{{\mathcal C}}
\newcommand{\ParClass}{{\mathcal V}}



\def\cn{{\C^n}}
\def\cnn{{\C^{n'}}}
\def\ocn{\2{\C^n}}
\def\ocnn{\2{\C^{n'}}}
\def\je{{\6J}}
\def\jep{{\6J}_{p,p'}}
\def\th{\tilde{h}}


\def\dist{{\rm dist}}
\def\const{{\rm const}}
\def\rk{{\rm rank\,}}
\def\id{{\sf id}}
\def\aut{{\sf aut}}
\def\Aut{{\sf Aut}}
\def\CR{{\rm CR}}
\def\GL{{\sf GL}}
\def\Re{{\sf Re}\,}
\def\Im{{\sf Im}\,}
\def\U{{\sf U}}

\def\la{\langle}
\def\ra{\rangle}

\emergencystretch15pt \frenchspacing

\newtheorem{theorem}{Theorem}[section]
\newtheorem{lemma}[theorem]{Lemma}
\newtheorem{proposition}[theorem]{Proposition}
\newtheorem{corollary}[theorem]{Corollary}

\theoremstyle{definition}
\newtheorem{definition}[theorem]{Definition}
\newtheorem{example}[theorem]{Example}

\theoremstyle{remark}
\newtheorem{remark}[theorem]{Remark}
\numberwithin{equation}{section}

\long\def\REM#1{\relax}

\newcommand{\func}[1]{\mathop{\mathrm{#1}}}
\renewcommand{\Re}{\func{Re}}
\renewcommand{\Im}{\func{Im}}

\newenvironment{mylist}{\begin{list}{}%
{\labelwidth=2em\leftmargin=\labelwidth\itemsep=.4ex plus.1ex minus.1ex\topsep=.7ex plus.3ex
minus.2ex}%
\let\itm=\item\def\item[##1]{\itm[{\rm ##1}]}}{\end{list}}

\title[Loewner Theory in annulus I]{Loewner Theory in annulus I: evolution families and
differential equations}

\author[M. D. Contreras]{Manuel D. Contreras $^\dag$}

\author[S. D\'{\i}az-Madrigal]{Santiago D\'{\i}az-Madrigal $^\dag$}
\address{Camino de los Descubrimientos, s/n\\
Departamento de Matem\'{a}tica Aplicada II\\
Escuela T\'{e}cnica Superior de Ingenier\'\i a\\
Universidad de Sevilla\\
Sevilla, 41092\\
Spain.}\email{contreras@us.es} \email{madrigal@us.es}

\author[P. Gumenyuk]{Pavel Gumenyuk $^\ddag$}
\address{Department of
Mathematics\\ University of Bergen, Johannes Brunsgate 12\\ Bergen 5008, Norway. }
\email{Pavel.Gumenyuk@math.uib.no}


\date{\today }
\subjclass[2010]{Primary 30C35, 30C20, 30D05; Secondary 30C80, 34M15}

\keywords{Univalent functions, annulus, Loewner chains, Loewner evolution,  evolution family,
parametric representation}

\thanks{$^\dag\,^\ddag$ Partially supported by the ESF Networking Programme ``Harmonic and Complex Analysis
and its Applications'' and by \textit{La Consejer\'{\i}a de Econom\'{\i}a, Innovaci\'{o}n y Ciencia
de la Junta de Andaluc\'{\i}a} (research group FQM-133)}

\thanks{$^\dag$ Partially supported by the \textit{Ministerio
de Ciencia e Innovaci\'on} and the European Union (FEDER), project MTM2009-14694-C02-02}

\thanks{$^\ddag$ Supported by a grant from Iceland, Liechtenstein and Norway through the EEA Financial Mechanism.
Supported and coordinated by Universidad Complutense de Madrid and by Instituto de Matem\'{a}ticas
de la Universidad de Sevilla. Partially supported by the {\it Scandinavian Network ``Analysis
and Applications''} (NordForsk), project \#080151,%
\ and the {\it Research Council of Norway}, project \#177355/V30}

\begin{abstract}
Loewner Theory, based on dynamical viewpoint, is a powerful tool in Complex Analysis, which plays a
crucial role  in such important achievements as the proof of famous Bieberbach's conjecture and
well-celebrated Schramm's Stochastic Loewner Evolution~(SLE). Recently Bracci et al
\cite{BCM1,BCM2,SMP} have proposed a new approach bringing together all the variants of the
(deterministic) Loewner Evolution in a simply connected reference domain. We construct an analogue
of this theory for the annulus. In this paper, the first of two articles, we introduce a general
notion of an evolution family over a system of annuli and prove that there is a 1-to-1
correspondence between such families and semicomplete weak holomorphic vector fields. Moreover, in
the non-degenerate case, we establish a constructive characterization of these vector fields
analogous to the non-autonomous Berkson\,--\,Porta representation of Herglotz vector fields in the
unit disk~\cite{BCM1}.

\end{abstract}

\maketitle

\tableofcontents

\section{Introduction}

\subsection{Loewner Theory}
In recent years, there has been a drastic growth of interest in the dynamical aspects in Complex
Analysis. Most of all, this applies to the so-called {\it Loewner Theory}, dealing with the
semigroup $\Hol(\UD,\UD)$ of all holomorphic self-maps of the unit disk~$\UD:=\{z:|z|<1\}$.

The core of modern Loewner Theory resides in the connection and interplay of the following three
basic notions:

\begin{mylist}
\item[-] {\it Herglotz vector fields $G:\UD\times[0,+\infty)\to\Complex$}, which are semicomplete time-dependent
holomorphic vector fields in the unit disk~$\UD$ and can be described as integrable families
$(G_t)_{t\ge0}$ of infinitesimal generators of one-parametric semigroups in~$\Hol(\UD,\UD)$;

\item[-] {\it Evolution families $(\varphi_{s,t})_{t\ge s\ge0}$}, which can be characterized as non-autonomous
holomorphic semiflows generated by Herglotz vector fields;

\item[-] {\it Loewner chains $(f_t)_{t\ge0}$}, which are one-parametric families of univalent functions
${f_t:\UD\to\Complex}$ with expanding systems of image domains~$f_t(\UD)$. Any Loewner chain
satisfies a linear PDE, known as {\it (generalized) Loewner\,--\,Kufarev PDE}, driven by a Herglotz
vector field. The corresponding evolution family $\varphi_{s,t}:=f_t^{-1}\circ f_s$ can be obtained
by solving the characteristic equation for this PDE.
\end{mylist}
There is an {\it essentially one-to-one correspondence} between Herglotz vector fields, evolution
families, and Loewner chains.

Classical Loewner Theory originated from the so-called {\it Parametric Representation} of univalent
functions proposed in~1923 by C.\,Loewner~\cite{Loewner} and developed further by a number of
specialists in Geometric Function Theory, among which we would like to mention the fundamental
contributions of Kufarev~\cite{Kufarev} and
Pommerenke~\cite{Pommerenke-65},\,\cite[Ch.\,6]{Pommerenke}. In the case which they considered, and
which is usually referred in modern literature to as the {\it(classical) radial case}, a Loewner
chain is a family of univalent functions $f_t(z)=e^tz+a_2(t) z^2+\ldots$, $z\in\UD$, such that
$f_s(\UD)\subset f_t(\UD)$ whenever $t\ge s$. A (classical radial) Herglotz vector field is of the
form $G(w,t):=-wp(w,t)$, where $p$ is holomorphic in $z\in\UD$, measurable in $t\ge0$, and
satisfies conditions $p(0,t)=1$ and $\Re p(w,t)>0$ for all $z\in \UD$ and a.e. $t\ge0$. For any
holomorphic vector field of this form there exists a unique classical radial Loewner chain $(f_t)$
such that $[s,+\infty)\ni t\mapsto\varphi_{s,t}(z):=f_t^{-1}\circ f_s(z)$ solves, for any $z\in\UD$
and $s\ge0$, the Loewner\,--\,Kufarev ODE $\dot w=G(w,t)$ with the initial condition~$w|_{t=s}=z$.
The converse is also true: any classical radial Loewner chain is a solution to the
Loewner\,--\,Kufarev PDE $\partial f_t(z)/\partial t=-f'(z)G(z,t)$ driven by some classical radial
Herglotz vector field~$G$, while the evolution family~$(\varphi_{s,t})$ corresponding to~$(f_t)$
solves the Loewner\,--\,Kufarev ODE. Moreover, the Loewner chain~$(f_t)$ can be reconstructed via
its evolution family~$(\varphi_{s,t})$ by means of the formula~$f_s=\lim_{t\to+\infty}
e^{t}\varphi_{s,t}$.

This relation between $G$, $(f_t)$, and $(\varphi_{s,t})$ provides a representation of the
class~$\mathcal S$ of all normalized holomorphic univalent functions in~$\UD$, since any
$f_0\in\mathcal S$ can be embedded as an initial element into a classical radial Loewner chain
\cite[Th.\,6.5 on p.\,159]{Pommerenke},~\cite{Gutljanski}. This representation was one of the main
tools in proof of the famous Bieberbach conjecture, given by de Branges~\cite{deBranges}.

A similar representation, designated in modern terminology by the attribute {\it chordal}, was
proposed by Kufarev and his students  for holomorphic univalent self-maps of the upper half-plane
with the hydrodynamic normalization at the point of infinity \cite{Kufarev_etal}; see also
references cited in~\cite[p.\,543--544]{SMP2}.

Komatu~\cite{Komatu} was the first who was able to apply Loewner's ideas for parametric
representation of univalent functions in the annulus. His approach was developed by
Goluzin~\cite{GoluzinM}, Li En Pir~\cite{LiEnPir}, Lebedev~\cite{Lebedev}, and
Gutljanski\u\i~\cite{Gutljanski_diss}.  More general cases of Loewner Evolution in multiply
connected context were studied in~\cite{Komatu1950} and by an essentially different method
in~\cite{KufarevKuvaev, Tsai}. The monograph~\cite{Aleksandrov} contains a self-contained detailed
account on the Parametric Representation both in simply and multiply connected cases.

Until the last decade the attention of specialists in Loewner Theory was mainly paid to the radial
case in the unit disk, first of all because of its applications in the study of the class~$\mathcal
S$ and its subclasses. The significance of the chordal Loewner Evolution as well as the Loewner
Evolution in multiply connected domains was apparently underestimated. However, nowadays the
Parametric Method, invented by Loewner to study the Bieberbach conjecture, has gone far beyond the
scope of the original problem, and the distribution of active interest in various aspects of the
theory has been changed. The most spectacular evidence of this fact is the well-celebrated
Stochastic Loewner Evolution (SLE) invented in 2000 by Schramm~\cite{Schramm}. It appears that the
chordal Loewner\,--\,Kufarev equation driven by the Brownian motion is intrinsically related to
several important lattice models in Statistical Physics, such as critical Ising model. We refer
interested readers to~\cite{LawlerBulAMS}. For a wider discussion on connection of deterministic
and stochastic Loewner Evolutions to Conformal Field Theory and Integrable Systems
see~\cite{VasMark-Sevilla} and references cited there.

To conclude, we would like to mention the survey paper~\cite{letters} that covers the basics of the
classical and modern Loewner Theory, its history, recent development and applications.

\subsection{Problem definition and main results}
Recently Bracci and the first two authors of this paper~\cite{BCM1,BCM2} introduced a new general
approach in Loewner Theory in the unit disk, which unifies, and contains as very special cases,
both chordal and radial Loewner Evolutions. This approach is based on a general intrinsic notion of
evolution family in~$\UD$ (see Definition~\ref{D_EV-simply}), which can be viewed as non-autonomous
generalization of continuous one-parametric semigroups in~$\Hol(\UD,\UD)$. Further developments in
this direction can be found in~\cite{EFversusSG,conTiz,SMP,SMP2}. In this paper we address the
following
\Unit{\bf Problem:} {\it to construct a general Loewner Theory in the annulus.}
\endunit
Our motivation is based on two reasons. Firstly, although this abstract approach was generalized to
arbitrary finite-dimensional complete hyperbolic complex manifolds~\cite{LA-FB,BCM2}, applying it
directly to any {\it multiply connected} hyperbolic Riemann surface~$D$ which is not conformally
equivalent to the punctured disk~$\UD^*:=\UD\setminus\{0\}$ would lead to a quite trivial theory,
because the connected component of~$\Hol(D,D)$ containing~$\id_D$ coincides with the group of
rotations  if $D$ is doubly connected, or consists of $\id_D$ only otherwise (see, e.g., \cite[\S
1.2.2]{Abate}). It follows that in order to develop an interesting substantial theory for multiply
connected case, instead of a static reference domain or Riemann surface~$D$ one has to consider a
one-parametric family~$(D_t)_{t\ge0}$ of reference domains admitting existence of injective
holomorphic mappings $\varphi_{s,t}:D_s\to D_t$ homotopically equivalent to the identity for any
$s\ge0$ and any $t\ge s$. In doubly connected case it leads to a family of expanding annuli
$D_t=\mathbb A_{r(t)}:=\{z:r(t)<|z|<1\}$, where ${r:[0,+\infty)\to[0,1)}$ is non-increasing and
continuous. The first who implemented this idea was already Komatu~\cite{Komatu}, but up to our
best knowledge all the previous studies of Loewner Evolution in doubly and, more generally,
multiply connected domains deal only with some special cases.

The other reason is the recent boost of interest in Loewner Theory as a whole as well as to its
variants for multiply connected domains. As an illustration, we cite papers
[\mcite{mSLE1}\,--\,\mcite{mSLE3},\,\mcite{Zhan}] extending the notion of SLE to multiply connected
case.

The study we present here is intended to be the first of two papers on the problem stated above; it
contains the theory of evolution families and semicomplete weak holomorphic vector
fields\footnote{The class of semicomplete weak holomorphic vector fields in~$\UD$ conincides with
that of Herglotz vector fields. However, we prefer to avoid using the term  Herglotz vector field
in doubly connected context.}, while Loewner chains will be considered in another paper.

In Section~\ref{S_evolDoubly} we introduce a notion of {\it evolution family over a canonical
domain system} of annuli (see Definitions~\ref{def-cansys} and~\ref{def-ev}), analogous in a
certain sense to that proposed in~\cite{BCM1} for the unit disk,  and establish some basic
properties of these evolution families.

In Section~\ref{S_EF-sWHVF} we discuss relationship between evolution families and Carath\'eodory
ODEs. In particular, we prove that {\it there is a 1-to-1 correspondence between the evolution
families we have introduced and semicomplete weak holomorphic vector fields}. More precisely, {\it
every evolution family~$\big((D_t),(\varphi_{s,t})\big)$ of order $d\in[1,+\infty]$ can be
described as the non-autonomous semiflow of some semicomplete weak holomorphic vector field $G$ of
order~$d$ in $\mathcal D:=\{(z,t):\,t\ge0,\,z\in D_t\}$. Conversely, every such semiflow
constitutes an evolution family}. See Theorem~\ref{TH_EF<->sWHVF} and Definitions~\ref{D_WHVF}
and~\ref{D_semicomplete} for the exact formulation.

Further, in Theorem~\ref{TH_semi-char-non-deg} we establish, for the case when all the annuli $D_t$
are non-degenerate, a {\it constructive characterization of semicomplete weak holomorphic vector
fields in $\mathcal D$} analogous to the non-autonomous version of the Berkson\,--\,Porta
representation~\cite[Theorem~4.8]{BCM1}, which characterizes Herglotz vector fields in the unit
disk.

A part of our proofs rely on lifting evolution families from a system of annuli to a simply
connected domain~$D$. This technique, together with some new results on evolution families in~$D$,
is developed in Section~\ref{S_evolSimply}.

Moreover, we include auxiliary Section~\ref{S_ODE} on Carath\'eodory differential equations driven
by weak holomorphic vector fields. The first part contains standard facts about solutions to such
ODEs which we regard as known. However, we include a sketch of the proofs, because up to our best
knowledge, no literature gives a direct proof of these results formulated exactly as we need. The
second part is devoted to the case of the ODEs which are semicomplete to the right. It contains an
exact characterization of the solutions, which is applied later to prove
Theorem~\ref{TH_EF<->sWHVF}.

In Section~\ref{S_examples} we consider a number of examples. In particular, we prove that, in
contrast to the simply connected case, an evolution family over a system of annuli can share any
finite number of fixed points.

Finally, in Section~\ref{S_Lebedev} we discuss shortly how our results are related to the
achievements of Komatu, Goluzin, Li En Pir, and Lebedev.

\section{Holomorphic Carath\'eodory differential equations}
\label{S_ODE}

In this section we obtain a characterization of solutions to  Carath\'eodory ordinary differential
equations driven by holomorphic vector fields.

Let $E\subset\Real$ be a non-empty interval, bounded or unbounded, and consider a connected
relatively open subset $\mathcal D$ of the set~$\Complex\times E$. We fix $E$ and $\mathcal D$
throughout this section. In this paper we apply the results of this section for the case
$E:=[0,+\infty)$ and $\mathcal D:=\UD\times[0,+\infty)$ or $\mathcal
D:=\{(z,t):\,t\ge0,\,0<|z|<r(t)\}$, where $r:[0,+\infty)\to[0,1)$ is non-increasing and continuous.
However, we prefer to consider the general case, which might be useful for further studies and at
the same time does not require any substantial modification of the argument.

By $AC^d(X,Y)$, where $X\subset \R$ and $d\in[1,+\infty]$, we denote the class of all {\it locally}
absolutely continuous functions $f:X\to Y$ such that the derivative $f'$ belongs to $L_{\rm
loc}^d(X)$. Further, denote by $D(z_0,r)$, where $r>0$ and $z_0\in\Complex$, the disk
$\{z:|z-z_0|<r\}$, and let $\UD:=D(0,1)$ and $\UC:=\partial\UD$.

\subsection{Weak holomorphic vector fields and Carath\'eodory ODEs}

\def\pr{\mathrm{pr}}
Carath\'eodory's theory of ODEs is a well-established area of Analysis, see
e.g. \cite[\S II.1]{Coddington}, \cite[\S I.1]{Filippov},
\cite[Ch.\,18]{Kurzweil}, or \cite[\S VIII.8]{Sansone}. The facts we state
below can be regarded as well-known. However, these results do not seem to be
proved earlier in the literature directly and explicitly in the form that our
context requires. That is why we prefer to sketch the proofs.

We start by introducing the class of vector fields we deal with.

\begin{definition}\label{D_WHVF}
Let  $d\in[1,+\infty]$.  A function $G:\mathcal D\to\Complex$ is said to be a {\it weak holomorphic
vector field} of order~$d$ in the domain $\mathcal D$, if it satisfies the following conditions:
\begin{mylist}
\item[WHVF1.] For each $z\in\Complex$ the function $G(z,\cdot)$ is measurable in~$E_z:=\{t:(z,t)\in\mathcal
D\}$.
\item[WHVF2.] For each $t\in E$ the function $G(\cdot,t)$ is holomorphic in
$D_t:=\{z:(z,t)\in\mathcal D\}$.

\item[WHVF3.] For each compact set $K\subset\mathcal D$ there exists a non-negative function ${k_K\in
L^d\big(\pr_{\mathbb R}(K),\Real\big)}$, where $\pr_{\mathbb R}(K):=\{t\in
E:~\exists\,z\in\Complex\quad(z,t)\in K\}$, such that
$$%
|G(z,t)|\le k_K(t),\quad\text{for all $(z,t)\in K$}.
$$%
\end{mylist}
\end{definition}
\begin{remark}\label{RM_changeWHVF3}
It is easy to see that condition WHVF3 in Definition~\ref{D_WHVF} is equivalent to
\begin{mylist}
\item[WHVF3'] For any closed disk $B\subset \Complex$ and any compact interval $I\subset E$ such
that $B\times I\subset \mathcal D$ there exists a non-negative function $k_{B,I}\in L^d(I,\Real)$
such that $|G(z,t)|\le k_{B,I}(t)$ for all $t\in I$ and all $z\in B$.
\end{mylist}
\end{remark}

Given a weak holomorphic vector field~$G$ in~$\mathcal D$ and a point $(z,s)\in\mathcal D$, let us
consider the following initial value problem for a Carath\'eodory ODE
\begin{equation}\label{EQ_CarODE-IVP}
\dot w=G(w,t),\quad w(s)=z.
\end{equation}
Problem~\eqref{EQ_CarODE-IVP} is equivalent (see, e.\,g.,~\cite[\S I.1]{Filippov}) to the integral
equation $\mathcal L_z^s w=w$, where
\begin{equation}\label{EQ_opL}
(\mathcal L_z^s w)(t):=z+\int_{s}^t G(w(\xi),\xi)\,d\xi
\end{equation}
is well-defined for any continuous function $w:J\to\Complex$ such that $J\subset E$ is an interval,
$s\in J$, and $(w(t),t)\in\mathcal D$ for all $t\in J$.

\begin{theorem}\label{TH_CarODETheory}
Let $G$ be a weak holomorphic vector field in $\mathcal D$ of order~$d\in[1,+\infty]$. Then the
following statements hold:
\begin{mylist}
\item[(i)] For any $(z,s)\in\mathcal D$, the initial value problem~\eqref{EQ_CarODE-IVP} has a unique non-extendable solution
$t\mapsto w^*_s(z,t)$. In other words, there exists an interval $J_*(z,s)\subset E$, $s\in
J_*(z,s)$, and a solution $w^*_s(z,\cdot)$ to the problem~\eqref{EQ_CarODE-IVP} defined in
$J_*(z,s)$ such that any other solution $t\mapsto w_0(t)$ to~\eqref{EQ_CarODE-IVP} is a restriction
of $w^*_s(z,\cdot)$.

\item[(ii)] For any $(z,s)\in\mathcal D$ the non-extendable solution $w^*_s(z,\cdot)$ belongs
to~$AC^d\big(J_*(z,s),\Complex\big)$.

\item[(iii)] For any $(z,s)\in\mathcal D$ there exists no compact set $K\subset\mathcal D$ such that
$\big\{\big(w^*_s(z,t),\,t\big):~t\in J_*(z,s),\,t\ge s\big\}\subset K$ unless $\sup J_*(z,s)=\sup
E$.

\item[(iv)] For any $(z_0,s_0)\in\mathcal D$ and any $t_0\in J_*(z_0,s_0)$ there exists $\varepsilon>0$ such
that $t_0\in J_*(z,s_0)$ whenever $|z-z_0|<\varepsilon $. Moreover, the function $z\mapsto
w^*_{s_0}(z,t_0)$ is holomorphic and injective in~$D(z_0,\varepsilon)$.

\item[(v)] For any $(z_0,s_0)\in\mathcal D$ and any $t_0\in J_*(z_0,s_0)$ there exists $\epsilon>0$
such that the mapping $(z,s,t)\mapsto w^*_{s}(z,t)$ is well-defined for all $(z,s,t)\in
U(\epsilon):=\{(z,s,t):\,{z\in D(z_0,\epsilon),}~{s,t\in E,}\, {|t-t_0|<\epsilon,}\,
{|s-s_0|<\epsilon}\}$. Moreover, this mapping is continuous at the point~$(z_0,s_0,t_0)$.
\end{mylist}
\end{theorem}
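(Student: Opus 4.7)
The plan is to follow the standard Carathéodory program, exploiting the key fact that holomorphy of $G(\cdot,t)$ combined with the $L^d$-majorant from WHVF3 automatically yields a local $L^d$ Lipschitz estimate in $z$ via Cauchy's inequality: if $\overline{D(z_0,2r)}\times I\subset\mathcal D$ with $|G(\cdot,t)|\le k_{B,I}(t)$ on this set, then $|G(z_1,t)-G(z_2,t)|\le r^{-1}k_{B,I}(t)|z_1-z_2|$ for $z_1,z_2\in D(z_0,r)$, with Lipschitz constant function $L:=r^{-1}k_{B,I}\in L^d\subset L^1_{\mathrm{loc}}$. This free Lipschitz control is what makes all five items work.

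\textbf{Parts (i)--(iii).} For local existence at $(z_0,s_0)$ I would apply the Banach fixed-point theorem to the operator $\mathcal L^{s_0}_{z_0}$ of \eqref{EQ_opL} on the complete metric space of continuous functions $J\to\overline{D(z_0,r)}$, with $J\subset E$ a short interval around $s_0$ chosen so that $\int_J k_{B,I}$ is small compared to $r$; the $L^1$-Lipschitz bound on $L$ then makes $\mathcal L^{s_0}_{z_0}$ a contraction with self-image, yielding a unique local solution. Uniqueness allows the standard pasting argument, producing the non-extendable solution $w^*_{s_0}(z_0,\cdot)$ on a maximal interval $J_*(z_0,s_0)\subset E$, proving~(i); item~(ii) is immediate from the integral equation and WHVF3. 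For~(iii), if the forward orbit sits inside a compact $K\subset\mathcal D$ but $b:=\sup J_*(z_0,s_0)<\sup E$, then $|\dot w^*_{s_0}(z_0,\cdot)|\le k_K\in L^1$ on $[s_0,b)$, so $w^*_{s_0}(z_0,\cdot)$ is uniformly continuous and extends by continuity to a limit point $w_\infty$; since $K$ is compact in $\mathcal D$, $(w_\infty,b)\in\mathcal D$, and local existence at $(w_\infty,b)$ extends the solution past $b$, a contradiction.

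\textbf{Part (iv).} The trajectory $\Gamma_0:=\{(w^*_{s_0}(z_0,t),t):t\in[\min(s_0,t_0),\max(s_0,t_0)]\}$ is compact in $\mathcal D$, hence admits a compact tubular neighbourhood $K\subset\mathcal D$. A Gronwall argument in $K$,
\begin{equation*}
|w^*_{s_0}(z,\tau)-w^*_{s_0}(z_0,\tau)|\le|z-z_0|\exp\!\left|\int_{s_0}^{\tau}L_K(\xi)\,d\xi\right|,
\end{equation*}
combined with the standard continuation argument, keeps the perturbed trajectory inside $K$ for all $|z-z_0|<\varepsilon$, so $t_0\in J_*(z,s_0)$. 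Holomorphy of $z\mapsto w^*_{s_0}(z,t_0)$ follows because each Picard iterate is holomorphic in $z$ by induction on the integral equation (integration of a parameter-holomorphic integrand preserves holomorphy), and the iterates converge locally uniformly, so Weierstrass applies; injectivity is obtained by running the ODE backward from the common value $w^*_{s_0}(z_1,t_0)=w^*_{s_0}(z_2,t_0)$ and invoking uniqueness.

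\textbf{Part (v).} This is the main obstacle, as the initial time must be perturbed simultaneously with the initial point and the terminal time. My plan is to subtract the integral equations for $w^*_s(z,\cdot)$ and $w^*_{s_0}(z_0,\cdot)$ to obtain
\begin{equation*}
w^*_s(z,\tau)-w^*_{s_0}(z_0,\tau)=(z-z_0)-\int_{s_0}^{s}G(w^*_{s_0}(z_0,\xi),\xi)\,d\xi+\int_{s}^{\tau}\bigl[G(w^*_s(z,\xi),\xi)-G(w^*_{s_0}(z_0,\xi),\xi)\bigr]\,d\xi,
\end{equation*}
and then to apply Gronwall in the compact tube $K$ around $\Gamma_0$ to get
\begin{equation*}
|w^*_s(z,\tau)-w^*_{s_0}(z_0,\tau)|\le\left(|z-z_0|+\left|\int_{s_0}^{s}k_K(\xi)\,d\xi\right|\right)\exp\!\left|\int_{s}^{\tau}L_K(\xi)\,d\xi\right|.
\end{equation*}
Since $k_K\in L^1$, absolute continuity of the Lebesgue integral makes the middle term vanish as $s\to s_0$, so the right-hand side is as small as we wish uniformly for $(z,s,t)$ in a neighbourhood $U(\epsilon)$ of $(z_0,s_0,t_0)$. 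This both confines the perturbed trajectories to $K$ on the relevant time interval, ensuring via the escape argument of~(iii) that $w^*_s(z,t)$ is well-defined on $U(\epsilon)$, and delivers the continuity estimate at $(z_0,s_0,t_0)$.
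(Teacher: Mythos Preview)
Your proposal is correct and covers all five items, but your route for (iv)--(v) differs from the paper's in a way worth noting. The paper packages local existence, holomorphy, and joint continuity in $(z,s,t)$ into a preparatory lemma (Lemma~\ref{LM_CarODE-local}), proved by running the Banach fixed-point argument on the space $C(A,B)$ of functions jointly continuous in all three variables and showing that the contraction $\mathcal L$ preserves holomorphy in $z$. It then obtains (iv) by writing $w^*_{s_0}(\cdot,t_0)$ as a finite composition of these local holomorphic maps along a partition of $[s_0,t_0]$, and (v) by the factorization $w^*_s(z,t)=w^*_{t_0}\big(w^*_{s_0}(w^*_s(z,s_0),t_0),\,t\big)$, which reduces continuity at $(z_0,s_0,t_0)$ to three separate continuity statements already in hand. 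Your approach instead attacks (iv)--(v) directly via Gronwall in a compact tube, subtracting the integral equations to control the difference of trajectories quantitatively. Both are standard and valid; the paper's compositional argument avoids having to verify the confinement-in-the-tube step explicitly (the uniform step size $h=h(G,K,R)$ from the lemma does that work), while your Gronwall route yields an explicit modulus of continuity and is arguably more self-contained. The holomorphy arguments are close cousins: you pass holomorphy through Picard iterates and invoke Weierstrass, the paper shows the fixed-point operator maps the closed subspace of $z$-holomorphic functions to itself.
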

First we prove the following lemma.

\begin{lemma}\label{LM_CarODE-local}
Let $G:\mathcal D\to\Complex$ be a weak holomorphic vector field of order~$d\in[1,+\infty]$ and
$K\subset\mathcal D$ a compact set. Suppose $I\subset E$ is a compact interval and
$B:=\overline{D(z_0,2R)}$ is a closed disk such that $B\times I\subset K$. Then for any $s\in I$
the following assertions hold:
\begin{mylist}
\item[(a)] for each $z\in D(z_0,R)$ and each sufficiently small interval $J\subset I$, $J\ni s$,
there exists a unique solution $J\ni t\mapsto w_s(z,t)$ to the initial value
problem~\eqref{EQ_CarODE-IVP};
\item[(b)] for each $z\in D(z_0,R)$ the solution to~\eqref{EQ_CarODE-IVP} can be continued
all over the interval $J_h(s):=(s-h,s+h)\cap I$, where $h=h(G,K,R)>0$ is a constant independent
of~$I$, $s$, and $z$;
\item[(c)] for each $t\in J_h$ the function $w_s(\cdot,t)$ is holomorphic in $D(z_0,R)$.
\end{mylist}
Moreover,
\begin{mylist}
\item[(d)] the mapping $(z,s,t)\mapsto w_s(z,t)$ is continuous on $A:=\{(z,s,t):\,{z\in D(z_0,R),}\, {s,t\in
I,}\,\\ {|s-t|<h}\}$.
\end{mylist}
\end{lemma}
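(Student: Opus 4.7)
The plan is to run a Picard iteration for the integral equation $\mathcal L_z^s w = w$ on a time-interval around $s$, exploiting the fact that the holomorphy of $G(\cdot,t)$ upgrades the $L^d$-bound on $|G|$ given by WHVF3 to an $L^d$-bound on its spatial Lipschitz constant via Cauchy's estimate. By condition WHVF3$'$ (Remark~\ref{RM_changeWHVF3}) fix a non-negative $k_K\in L^d(I,\Real)$ with $|G(z,t)|\le k_K(t)$ on $B\times I$. Since $I$ is compact, $k_K\in L^1(I)$, so by absolute continuity of the Lebesgue integral there exists $h=h(G,K,R)>0$, independent of $s\in I$, such that
\[
\int_J k_K(\xi)\,d\xi\le R/2\qquad\text{whenever}\quad J\subset I,\ |J|\le 2h.
\]
For any $w\in D(z_0,R)$ the closed disk $\overline{D(w,R)}$ lies in $B=\overline{D(z_0,2R)}$, so Cauchy's estimate yields $|\partial_z G(w,t)|\le k_K(t)/R$, and integration along segments gives
\[
|G(w_1,t)-G(w_2,t)|\le\frac{k_K(t)}{R}|w_1-w_2|,\qquad w_1,w_2\in D(z_0,R),\ t\in I.
\]

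For (a) and (b), fix $s\in I$ and $z\in D(z_0,R)$ and consider the complete metric space
\[
X_{z,s}:=\bigl\{w\in C\bigl(J_h(s),\overline{D(z_0,2R)}\bigr):\ w(s)=z,\ \sup\nolimits_{t\in J_h(s)}|w(t)-z|\le R\bigr\}
\]
equipped with the sup-norm. The bounds above show that $\mathcal L_z^s$ sends $X_{z,s}$ into itself since $|(\mathcal L_z^s w)(t)-z|\le R/2$, and is a strict contraction with factor at most $1/2$. Banach's theorem delivers a unique fixed point $w_s(z,\cdot)$ on $J_h(s)$, proving both (a) and (b); uniqueness on any smaller interval containing $s$ is then a standard connectedness argument starting from this local uniqueness. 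For (c), the Picard iterates $w^{(n+1)}:=\mathcal L_z^s w^{(n)}$ starting from $w^{(0)}\equiv z$ are holomorphic in $z\in D(z_0,R)$ for every fixed $t\in J_h(s)$: this propagates by induction since one can differentiate under the integral sign, justified by the dominating function $k_K$. Uniform-in-$t$ convergence of $w^{(n)}$ to $w_s(z,\cdot)$ transfers holomorphy to the limit.

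The main obstacle is (d). I would split
\[
w_s(z,t)-w_{s'}(z',t')=\bigl[w_s(z,t)-w_s(z',t)\bigr]+\bigl[w_s(z',t)-w_{s'}(z',t)\bigr]+\bigl[w_{s'}(z',t)-w_{s'}(z',t')\bigr]
\]
and estimate each piece. The last bracket is bounded by $\bigl|\int_{t'}^{t}k_K\bigr|$, which vanishes as $t\to t'$ by absolute continuity. For the first, subtracting integral equations and invoking Gronwall with the spatial Lipschitz bound gives the estimate $|z-z'|\cdot e^{1/2}$. For the middle bracket, one writes
\[
w_s(z',t)-w_{s'}(z',t)=\int_{s}^{s'}\!G\bigl(w_{s}(z',\xi),\xi\bigr)\,d\xi+\int_{s'}^{t}\!\bigl[G(w_s(z',\xi),\xi)-G(w_{s'}(z',\xi),\xi)\bigr]\,d\xi,
\]
bounds the first integral by $\bigl|\int_s^{s'} k_K\bigr|\to 0$, applies the spatial Lipschitz bound to the integrand of the second, and closes with Gronwall. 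The most delicate point is ensuring that both $w_s(z',\cdot)$ and $w_{s'}(z',\cdot)$ remain inside $\overline{D(z_0,2R)}$ on the whole interval of integration so that the Cauchy-based Lipschitz estimate is applicable; this is exactly what the uniformity of $h$ in (b) secures, and it is the principal bookkeeping difficulty of the argument.
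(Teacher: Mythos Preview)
Your argument has the right shape, but there is a mismatch of radii that breaks the contraction step as written. You derive the Lipschitz bound $|G(w_1,t)-G(w_2,t)|\le (k_K(t)/R)\,|w_1-w_2|$ only for $w_1,w_2\in D(z_0,R)$, since Cauchy's estimate with radius~$R$ requires $\overline{D(w,R)}\subset B$. But the functions in your space $X_{z,s}$ take values in $\overline{D(z,R)}$ with $z\in D(z_0,R)$, hence merely in $D(z_0,2R)$, and need not remain in $D(z_0,R)$. Thus the Lipschitz inequality you invoke for the contraction---and again for the Gronwall estimates in~(d)---is not available on the full range of the trajectories. The fix is easy: either apply Cauchy's estimate with radius~$R/2$ at points of $D(z_0,3R/2)$ (giving Lipschitz constant $2k_K(t)/R$ there) and shrink $h$ so that $\int_J k_K\le R/4$; or, as the paper does, first thicken $K$ by a $\delta$-collar inside~$\mathcal D$ and obtain a separate Lipschitz majorant $\hat k_K\in L^d$ valid on all of~$K$, then choose $h$ so that both $\int_J k_K<R$ and $\int_J \hat k_K<1$.

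Apart from this bookkeeping slip, your proof is sound and close in spirit to the paper's. The one genuine methodological difference is in~(d): the paper bypasses your three-term splitting and Gronwall argument by running the Banach fixed-point theorem once on the larger space $C(A,B)$ with the sup-metric over all $(z,s,t)\in A$; continuity of the solution in $(z,s,t)$ then comes for free as continuity of the unique fixed point of a contraction on a space of continuous functions. Your Gronwall route works as well (once the Lipschitz bound is corrected) and yields explicit moduli of continuity in each variable, at the price of more case-checking---in particular you must make sure, when $(z,s,t)$ is close to $(z',s',t')$, that $t$ lies in $J_h(s)\cap J_h(s')$, which the paper's global-contraction argument sidesteps entirely.
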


\begin{proof}
First of all let us note that, since $K\subset\mathcal D$ is compact, there exists $\delta>0$ such
that
$$%
\bigcup_{(z,t)\in K}\Big(\overline{D(z,\delta)}\times\{t\}\Big)\subset\subset\mathcal D.
$$%
Then the argument of the proof of \cite[Lemma~4.2]{BCM1} can be easily adapted
to show that there exists a non-negative function $\hat k_K\in
L^d\big(\pr_{\mathbb R}(K),\Real\big)$ depending only on $G$ and $K$ such that
\begin{equation}\label{EQ_G-diff}
|G(z_2,t)-G(z_1,t)|\le \hat k_K(t)|z_2-z_1|\quad\text{whenever $(z_1,t), (z_2,t)\in K$.}
\end{equation}

Choose any $\alpha\in(0,1)$. There exists $h>0$ that fulfills the following two conditions:
\begin{equation}\label{EQ_cond1}
\left|\int_{s}^{t}k_K(\xi)\,d\xi\right|<R\quad\text{whenever $s,t\in I$ and $|t-s|<h$},
\end{equation}
\begin{equation}\label{EQ_cond2}
\left|\int_{s}^{t}\hat k_K(\xi)\,d\xi\right|\le\alpha\quad\text{whenever $s,t\in I$ and $|t-s|<h$}.
\end{equation}

By $C(X,Y)$ let us denote the class of all continuous functions from $X$ to~$Y$. Fix any $s\in I$.
Given an interval $J\subset J_h(s)$ with $s\in J$, from \eqref{EQ_G-diff}\,--\,\eqref{EQ_cond2} it
follows easily that for any $z\in D(z_0,R)$, the operator $\mathcal L_z^s$, given
by~\eqref{EQ_opL}, is a contracting self-mapping of~$C(J,B)$ endowed with the Chebyshov metric
$\rho(w_2,w_1):=\sup_{t\in J}|w_2(t)-w_1(t)|$. The metric space $M:=\big(C(J,B),\rho\big)$ is
complete. Hence  the Banach fixed point theorem implies statements (a) and (b) of the lemma.

The proof of (d) is similar. The operator $\mathcal L$, defined by the formula $\big(\mathcal L
w\big)(z,t;s):=\big(\mathcal L_z^s w(z,\cdot;s)\big)(t)$, is a contractive self-mapping of $C(A,B)$
endowed with the metric $\tilde \rho(w_2,w_1):=\sup_{(z,s,t)\in A}\big|w_2(z,t;s)-w_1(z,t;s)\big|$.
The metric space $\tilde M=\big(C(A,B),\tilde \rho\big)$ is complete. Hence it contains a solution
to the equation $\mathcal Lw=w$, which, in virtue of~(a), has to coincide with $A\ni (z,s,t)\mapsto
w_s(z,t)$. This proves (d).

We are left with statement~(c). Clearly, it is sufficient to show that $\mathcal L$ maps into
itself the closed subspace $\tilde M_{\rm hol}$ of $\tilde M$ consisting of all maps from~$\tilde
M$ which are holomorphic in~$z$.

So consider any $w\in\tilde M_{\rm hol}$. Fix an arbitrary $(z,s,t)\in A$. For all $\xi\in J_h(s)$
and all $\omega\in U:=D\big(0,(R-|z-z_0|)/2\big)\setminus\{0\}$ we have
\begin{equation}\label{EQ_LebDomConv}%
\left|\frac{G\big(w(z+\omega,\xi;s),\xi\big)-G\big(w(z,\xi;s),\xi\big)}{\omega}\right|\le C\,\hat
k_K(\xi)
\end{equation}
with some constant $C=C(z)>0$ not depending on $\omega$ and $\xi$. Here we
combined~\eqref{EQ_G-diff} with the fact that the family $\big\{w(\cdot,\xi;s):\xi\in J_h(s)\big\}$
is normal in~$D(z_0,R)$ and hence the mapping $(\omega,\xi)\mapsto
\big(w(z+\omega,\xi;s)-w(z,\xi;s)\big)/\omega$ is bounded on $U\times J_h(s)$.

By construction,
\begin{multline}\label{EQ_opLdiff}
\frac{\big(\mathcal L w\big)(z+\omega,t;s)-\big(\mathcal L
w\big)(z,t;s)}{\omega}=1+\\+\int_{s}^{t}\frac{G\big(w(z+\omega,\xi;s),\xi\big)-
G\big(w(z,\xi;s),\xi\big)}{\omega}\,d\xi.
\end{multline}

From~\eqref{EQ_LebDomConv} it follows that we can apply Lebesgue's dominated convergence theorem to
conclude that there exists a finite limit of the integral in~\eqref{EQ_opLdiff} as $\omega\to0$.
Thus $\big(\mathcal L w)(\cdot,t;s)$ is differentiable in the complex sense in $D(z_0,R)$. This
completes the proof.
\end{proof}

\begin{proof}[\bf Proof of Theorem~\ref{TH_CarODETheory}]
Assertion (a) of Lemma~\ref{LM_CarODE-local} implies that for any
$(z,s)\in\mathcal D$ there exists a unique local solution
to~\eqref{EQ_CarODE-IVP}. Thus statement~(i) of the theorem follows in the same
way as in the classical theory of ODEs.

Fix now any $(z,s)\in\mathcal D$ and take an arbitrary compact interval $I\subset J_*(z,s)$. By
condition WHVF3 with $K:=\big\{\big(w_s^*(z,\xi),\,\xi\big):~\xi\in I\big\}$, for any $t_1,t_2\in
I$ such that $t_1<t_2$ we have
$$|w_s^*(z,t_2)-w_s^*(z,t_1)|\le
\int_{t_1}^{t_2}\big|G\big(w_s^*(z,\xi),\xi\big)|\,d\xi\le\int_{t_1}^{t_2}k_K(\xi)\,d\xi,$$ where
$k_K\in L^d(I,\Real)$ and does not depend on $t_1$ and $t_2$. This proves statement~(ii).

Let $K\subset\mathcal D$ be any compact set. Then there is another compact set~$K_1\subset\mathcal
D$ and constants $R>0$ and $\delta>0$ such that $\overline
{D(z,2R)}\times\big([s-\delta,s+\delta]\cap E\big)\subset K_1$ whenever $(z,s)\in K$. We claim that
any solution $J\ni t\mapsto w_0(t)$ to the equation $\dot w=G(w,t)$ such that
$\big(w_0(t),t\big)\in K$ for all $t\in J$, is extendable to a neighborhood of $t^*:=\sup J$
provided $t^*<\sup E$. Indeed, by assertion~(b) of Lemma~\ref{LM_CarODE-local} applied with $K_1$
substituted for~$K$, there exists $\delta_1\in(0,\delta]$ such that for any $s\in J$ the initial
value problem~\eqref{EQ_CarODE-IVP} with $z:=w_0(s)$ has a solution defined on the interval
$(s-\delta_1,s+\delta_1)\cap E$. In view of the uniqueness of solution to~\eqref{EQ_CarODE-IVP},
this proves our claim, which, in its turn, implies statement~(iii) of the theorem.

Let us prove statement~(iv). For simplicity we may assume that $t_0>s_0$. Let $J:=[s_0,t_0]$.
Taking $w_0:=w_{s_0}^*(z_0,\cdot)|_J$ and $K:=\big\{\big(w_0(t),t\big):\,\,t\in J\big\}$ in the
above argument and using assertion~(c) of Lemma~\ref{LM_CarODE-local} we can conclude that there
exists a finite increasing sequence $(s_j)_{j=0}^n$ starting with $s_0$ and finishing with
$s_n:=t_0$ such that for each $j=1,\ldots,n$ the function $w^*_{s_{j-1}}(z,t)$ is well-defined and
holomorphic in $z$ for all $t\in I_j:=[s_{j-1},s_j]$ and all $z\in D(w_0(s_{j-1}),R)$. Since
$w^*_{s_{j-1}}\big(w_0(s_{j-1}),s_j\big)=w_0(s_j)$, there exists $\varepsilon>0$ such that the
composition $f(z):=w^*_{s_{n-1}}(\cdot,s_n)\circ\ldots\circ w^*_{s_1}(\cdot,s_2)\circ
w^*_{s_0}(\cdot,s_1)$ is well-defined and holomorphic in $D(z_0,\varepsilon)$. It follows that
$w_{s_0}^*(\cdot,t_0)$ is also well-defined in $D(z_0,\varepsilon)$ and coincides there with~$f$.
The fact that $w_{s_0}^*(\cdot,t_0)$ is injective follows from the uniqueness of the solution in
the same way as in the classical theory of ODEs.

We are left with the proof of statement~(v). By statement~(iv) the map $w_{s_0}^*(\cdot,t_0)$ is
defined in $D(z_0,\varepsilon)$ and is continuous at the point~$z_0$. It follows from
assertions~(b) and~(d) of Lemma~\ref{LM_CarODE-local} that there exists $\varepsilon_1>0$ such that
the map $(z,s)\mapsto w_{s}(z,s_0)$ is well defined in $\mathcal
O_{\varepsilon_1}(z_0,s_0):=\{(z,s):|z-z_0|<\varepsilon_1,\,|s-s_0|<\varepsilon_1,\,s\in E\}$ and
continuous at~$(z_0,s_0)$. Hence the map $(z,s)\mapsto
f(z,s):=w^*_{s_0}\big(w^*_{s}(z,s_0),t_0\big)$ is well-defined in $\mathcal
O_{\varepsilon_2}(z_0,s_0)$ for some $\varepsilon_2>0$ and continuous at the point $(z_0,s_0)$.
Denote $\zeta_0:=w_{s_0}^*(z_0,t_0)$. Again by assertions~(b) and~(d) of
Lemma~\ref{LM_CarODE-local}, the map $(\zeta,t)\mapsto w^*_{t_0}(\zeta,t)$ is well-defined in
$\mathcal O_{\varepsilon_3}(\zeta_0,t_0)$ for some $\varepsilon_3>0$ and continuous at
$(\zeta_0,t_0)$. Thus the map $(z,s,t)\mapsto g(z,s,t):=w^*_{t_0}(f(z,s),t)$ is well-defined in
$U(\epsilon)$ for some $\epsilon>0$ and continuous at the point~$(z_0,s_0,t_0)$.  To finish now the
proof it remains to notice that $g$ is a restriction of the mapping $(z,s,t)\mapsto w_s^*(z,t)$.
\end{proof}

\subsection{Semicomplete weak holomorphic vector fields and families of holomorphic functions generated by them}
In this section we consider weak holomorphic vector fields~$G$ for which the solution to the
initial value problem~\eqref{EQ_CarODE-IVP} exists globally to the right. Our proofs take advantage
of the methods used in~\cite{BCM1,BCM2,EFversusSG,conTiz}. Without loss of generality we adopt the
following

\Unit{\bf Assumption.} For any $t\in E$ the set $D_t:=\{z:(z,t)\in \mathcal D\}$ is not empty. For
simplicity we will assume that all $D_t$'s are domains, which is enough for our purpose. However,
our arguments (with minimal changes) are also valid for the case when some of the $D_t$'s are not
necessarily connected.
\endstep

Recall that by Theorem~\ref{TH_CarODETheory}, for any $(z,s)\in\mathcal D$ all solutions to the
initial value problem~\eqref{EQ_CarODE-IVP} can be obtained as restrictions of the unique
non-extendable solution $J_*(z,s)\ni t\mapsto w^*_s(z,t)\in\Complex$.
\begin{definition}\label{D_semicomplete}
A weak holomorphic vector field $G:\mathcal D\to\Complex$ (of some order~$d\in[1,+\infty]$) is said
to be {\it semicomplete} if for any $(z,s)\in\mathcal D$, the inclusion $J_*(z,s)\supset
E^s:=E\cap[s,+\infty)$ takes place, i.e. the initial value problem~\eqref{EQ_CarODE-IVP} has a
solution defined everywhere in $E^s$.
\end{definition}

In the following proposition we establish some important properties of the non-autonomous semiflows
generated by semicomplete weak holomorphic vector fields.
\begin{proposition}\label{PR_sol_semicomplete}
Let $G:\mathcal D\to\Complex$ be a semicomplete weak holomorphic vector field of
order~$d\in[1,+\infty]$. Then the formula $\varphi_{s,t}(z):=w_s^*(z,t)$ defines a family
$(\varphi_{s,t})_{s\in E,\,t\in E^s}$ of mappings $\varphi_{s,t}:D_s\to D_t$ such that the
following assertions hold:
\begin{mylist}
\item[(i)] $\varphi_{s,t}$ is holomorphic in $D_s$ for any $s\in E$ and any $t\in E^s$;
\item[(ii)] $\varphi_{s,s}=\id_{D_s}$ for any $s\in E$;
\item[(iii)] $\varphi_{s,t}=\varphi_{u,t}\circ \varphi_{s,u}$ for any $s,u,t\in E$ such that $s\le u\le
t$;
\item[(iv)] for any compact set $K\subset \mathcal D$ there exists a non-negative function $\tilde k_K\in L^d_{\rm
loc}(E,\Real)$ such that
$$%
|\varphi_{s,t}(z)-\varphi_{s,u}(z)|\le\int_{u}^t \tilde k_K(\xi)d\xi
$$%
for any $u,t\in E$ and any $(z,s)\in K$ satisfying $s\le u\le t$.
\end{mylist}
\end{proposition}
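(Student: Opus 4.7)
All four assertions flow from Theorem~\ref{TH_CarODETheory} once semicompleteness is invoked to guarantee that the non-extendable solution $w^*_s(z,\cdot)$ is defined on the whole of $E^s$.

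The first three parts are essentially bookkeeping. For (i), I fix $s\in E$ and $t\in E^s$; at every $z_0\in D_s$ we have $t\in E^s\subset J_*(z_0,s)$ by semicompleteness, so Theorem~\ref{TH_CarODETheory}(iv) yields a neighborhood of $z_0$ on which $z\mapsto w^*_s(z,t)=\varphi_{s,t}(z)$ is holomorphic; the inclusion $\varphi_{s,t}(D_s)\subset D_t$ is built into the notion of a solution, since $(w^*_s(z,\xi),\xi)\in\mathcal D$ for every $\xi$ in the domain of $w^*_s(z,\cdot)$. Assertion (ii) is immediate from the initial condition $w^*_s(z,s)=z$. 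For (iii), both curves $t\mapsto \varphi_{s,t}(z)$ and $t\mapsto\varphi_{u,t}(\varphi_{s,u}(z))$, considered for $t\ge u$, solve $\dot w=G(w,t)$ with the same initial value $\varphi_{s,u}(z)$ at $t=u$, so the uniqueness part of Theorem~\ref{TH_CarODETheory}(i) forces them to agree.

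The real work is in (iv). The key claim is that for every compact interval $I\subset E$ the trajectory set
\[
\widetilde K_I:=\{(\varphi_{s,\xi}(z),\xi):\,(z,s)\in K,\ \xi\in I,\ \xi\ge s\}
\]
is a compact subset of $\mathcal D$. Granting this, condition WHVF3 applied to $\widetilde K_I$ supplies a non-negative function $k_I\in L^d(I,\R)$ bounding $|G|$ on $\widetilde K_I$, and integrating $\dot\varphi_{s,\cdot}(z)=G(\varphi_{s,\cdot}(z),\cdot)$ from $u$ to $t$ produces the desired inequality on $I$. To manufacture a single $L^d_{\mathrm{loc}}$ function $\tilde k_K$ on all of $E$, I exhaust $E\cap[s_{\min},+\infty)$, where $s_{\min}:=\min\pr_{\R}(K)$ (well-defined by compactness), by an increasing sequence of compact intervals $(I_n)$ and glue the corresponding $k_{I_n}$ piecewise, extending by zero on $E\cap[0,s_{\min})$.

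The proof of the key claim is where the main subtlety lies, and is the step I expect to require the most care. The set
\[
\widehat K_I:=\{(z,s,\xi):(z,s)\in K,\ \xi\in I,\ \xi\ge s\}
\]
is a closed subset of the compact $K\times I$, hence compact. By semicompleteness, the map $\Phi(z,s,\xi):=(w^*_s(z,\xi),\xi)$ is defined on $\widehat K_I$, and Theorem~\ref{TH_CarODETheory}(v) gives continuity of $\Phi$ at each point of its domain; hence $\Phi$ is continuous on $\widehat K_I$ and $\widetilde K_I=\Phi(\widehat K_I)$ is compact. Containment in $\mathcal D$ is automatic, since $(w^*_s(z,\xi),\xi)\in\mathcal D$ whenever $w^*_s(z,\cdot)$ is a solution. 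The point to watch is that Theorem~\ref{TH_CarODETheory}(v) furnishes continuity only through the local mapping neighborhoods $U(\epsilon)$, so I must be sure these neighborhoods cover the compact set $\widehat K_I$, including its boundary edge $\xi=s$; a standard finite subcover argument handles this.
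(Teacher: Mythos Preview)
Your argument is correct and follows essentially the same route as the paper: parts (i)--(iii) are read off from Theorem~\ref{TH_CarODETheory}, and for (iv) you use the continuity statement~(v) to show the trajectory set over a compact time window is compact, apply WHVF3, and glue along an exhaustion. The one point you flag as delicate---the boundary edge $\xi=s$ and whether the $U(\epsilon)$ neighborhoods cover---is in fact not a subtlety: Theorem~\ref{TH_CarODETheory}(v) already asserts continuity of $(z,s,t)\mapsto w^*_s(z,t)$ at every point with $t\in J_*(z,s)$, in particular at $t=s$, so no additional covering argument is needed.
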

\begin{proof}
The proof of the first 3 assertions is straightforward from previous results.
Indeed, assertion~(i) follows from Theorem~\ref{TH_CarODETheory}\,-(iv),
assertion (ii) holds by the very definition of $w_s^*(z,t)$, and (iii) is a
consequence of Theorem~\ref{TH_CarODETheory}\,-(i).

We are left with assertion~(iv). Let $K\subset\mathcal D$ is compact. For each
$T\in E^s$ the set $\widetilde K(T):=\{\big(\varphi_{s,t}(z),\,t\big):\,
(z,s)\in K,\, s\le t\le T\}\subset\mathcal D$ is also compact, since
$(z,s,t)\mapsto\varphi_{s,t}(z)$ is continuous by
Theorem~\ref{TH_CarODETheory}\,-(v).

Denote $s_0:=\min\pr_{\mathbb R}(K)$. Then, by condition WHVF3 of
Definition~\ref{D_WHVF}, there exists a non-negative function $k_{\widetilde
K(T)}\in L^d\big([s_0,T],\Real\big)$ such that $|G(w,t)|\le k_{\widetilde
K(T)}(t)$ for any $(w,t)\in \widetilde K(T)$. Extend $k_{\widetilde K(T)}$ to
$E$ by setting $k_{\widetilde K(T)}(t)=0$ for all $t\in E\setminus[s_0,T]$.
Take any non-decreasing sequence $(T_n)\subset E$ such that $T_1>s_0$ and
$T_n\to \sup E$ as $n\to+\infty$. Define $\tilde
k_K:=\chi_{[s_0,T_1]}k_{\widetilde
K(T_1)}+\sum_{n=2}^{+\infty}\chi_{(T_{n-1},T_n]}k_{\widetilde K(T_n)}$, where
$\chi_A$ stands for the characteristic function of a set $A$. Obviously,
$\tilde k_K\in L^d_{\rm loc}(E,\Real)$ and
$\big|G\big(\varphi_{s,t}(z),t\big)\big|\le \tilde k_K(t)$ for all $(z,s)\in K$
and all $t\in E^s$.

To complete the proof it remains to recall that
$%
\varphi_{s,t}(z)-\varphi_{s,u}(z)=\int_{u}^{t}G\big(\varphi_{s,\xi}(z),\xi\big)\,d\xi
$ %
for any $(z,s)\in\mathcal D$ and any $u,t\in E^s$.
\end{proof}

The converse of Proposition~\ref{PR_sol_semicomplete} is also true: the
properties (i)--(iv) turn out to be characteristic. The exact formulation of
this fact uses the following two notions from the analysis of infinitely
dimensional vector-functions of a real variable.

Let $U$ and $W$ be some domains in the complex plane~$\Complex$ and $I\subset\Real$ an interval
containing at least two different points. By $\Hol(U,W)$ we denote the set of all holomorphic
functions from $U$ to $W$ endowed with the topology of the uniform convergence.
\begin{definition}\label{D_abs_cont}
A mapping $\varphi:I\to\Hol(U,W)$ will be called {\it absolutely continuous} on $I$ if for any
$\varepsilon>0$ and any compact set $K\subset U$ there exists $\delta=\delta(\varepsilon,K)>0$ such
that for any finite set of pairwise disjoint intervals $(I_j)_{j=1}^n$, $I_j:=(a_j,b_j)$,
$a_j,b_j\in I$, we have
$$\sum_{j=1}^n(b_j-a_j)<\delta\quad\Longrightarrow\quad \sum_{j=1}^n\|\varphi(b_j)-\varphi(a_j)\|_K<\varepsilon,$$
where $\|\cdot\|_K$ stands for the Chebyshov norm on $K$, i.e., $\|f\|_K:=\sup_{z\in K}|f(z)|$ for
any bounded function $f:K\to\Complex$.

A mapping $\varphi:I\to\Hol(U,W)$ will be called {\it locally absolutely continuous} if $\varphi$
is
absolutely continuous on each compact interval $J\subset I$.%
\end{definition}
\begin{definition}\label{D_diff}
A mapping $\varphi:I\to\Hol(U,W)$ will be called {\it differentiable} at a point $t_0\in I$ if
there exists a function $g\in\Hol(U,\Complex)$ such that
$\big(\varphi(t)-\varphi(t_0)\big)/(t-t_0)\to g$ in the topology of $\Hol(U,\Complex)$, i.e.
$$\big\|g-\big(\varphi(t)-\varphi(t_0)\big)/(t-t_0)\big\|_K\to0\quad\text{ for any
$K\subset\subset U$,}$$ as $t\to t_0$, $t\in I\setminus\{t_0\}$. The function $g$ is the {\it
derivative} of $\varphi$ at the point~$t_0$ and will be denoted by $(d\varphi/dt)(t_0)$ or $\dot
\varphi(t_0)$.
\end{definition}

\begin{remark}\label{R_R}
Let $G:U_1\to U$ and $F:W\to W_1$ be two arbitrary holomorphic functions.
Define the mapping ${C_{F,G}:\Hol(U,W)\to\Hol(U_1,W_1)}$ by setting
$C_{F,G}(g):=F\circ g\circ G$ for all $g\in\Hol(U,W)$. It is easy to show that
$C_{F,G}\circ \varphi$ is locally absolutely continuous provided so is
$\varphi:I\to\Hol(U,W)$. The same is true for the notion of differentiability:
if $\varphi:I\to\Hol(U,W)$ is differentiable at some point~$t_0\in E$, then
$C_{F,G}\circ \varphi$ is also differentiable at~$t_0$ and
$\big(d\,(C_{F,G}\circ\varphi)/dt\big)(t_0)=\big(F'\circ\varphi(t_0)\circ
G\big)\big((d\varphi/dt)(t_0)\circ G\big)$.
\end{remark}

Now we can state the following
\begin{theorem}\label{TH_sol_semicomplete}
Let $d\in[1,+\infty]$ and let $(\varphi_{s,t})_{s\in E,\,t\in E^s}$ be a family of maps
$\varphi_{s,t}:D_s\to D_t$ such that assertions~(i)--(iv) of Proposition~\ref{PR_sol_semicomplete}
hold. Then there exist a null-set $N\subset E$ and a semicomplete weak holomorphic vector field
$G:\mathcal D\to\Complex$ of order~$d$ such that the following statements are true:
\begin{mylist}
\item[(a)] for any $s\in E$ the map $E^s\ni t\mapsto \varphi_{s,t}\in\Hol(D_s,\Complex)$ is locally
absolutely continuous;
\item[(b)] for any $s\in E':=E\setminus\{\sup E\}$ the map $E^s\ni t\mapsto
\varphi_{s,t}\in\Hol(D_s,\Complex)$ is differentiable in $E^s\setminus N$;
\item[(c)] $d\varphi_{s,t}/dt=G(\cdot,t)\circ\varphi_{s,t}$ for all $s\in E'$ and all $t\in E^s\setminus
N$.
\end{mylist}
\end{theorem}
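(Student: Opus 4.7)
My plan splits into three stages: proving assertion~(a); identifying a candidate vector field~$G$ off a single null set~$N\subset E$; and finally upgrading scalar a.e.~differentiability to differentiability in the Fr\'echet space $\Hol(D_s,\Complex)$, which yields~(b) and~(c) simultaneously.

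For~(a), fix $s\in E$ and a compact set $K\subset D_s$. The map $(z,\tau)\mapsto\varphi_{s,\tau}(z)$ is jointly continuous on $D_s\times E^s$: continuity in~$\tau$ uniformly on~$K$ comes directly from~(iv), and continuity in~$z$ comes from Cauchy estimates, using that $\{\varphi_{s,\tau}\}_{\tau\in[a,b]}$ is locally bounded on $D_s$ by~(iv). Hence for any compact $[a,b]\subset E^s$ the orbit tube $K':=\{(\varphi_{s,\tau}(z),\tau):z\in K,\,\tau\in[a,b]\}$ is a compact subset of~$\mathcal D$, and applying~(iv) with $K'$ yields
$$\sum_{j=1}^n\|\varphi_{s,b_j}-\varphi_{s,a_j}\|_K\le\sum_{j=1}^n\int_{a_j}^{b_j}\tilde k_{K'}(\xi)\,d\xi$$
for any pairwise disjoint subintervals $(a_j,b_j)\subset[a,b]$. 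Since $\tilde k_{K'}\in L^d_{\rm loc}\subset L^1_{\rm loc}$, absolute continuity in $\Hol(D_s,\Complex)$ follows.

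To build the exceptional null set, pick a countable dense $\hat Q\subset\Complex$, a countable dense $\hat S\subset E$, and a countable compact exhaustion $(K_m)$ of $\mathcal D$. For every $(z,s)\in(\hat Q\times\hat S)\cap\mathcal D$ the complex-valued absolutely continuous function $t\mapsto\varphi_{s,t}(z)$ is differentiable off a Lebesgue-null set $N(z,s)\subset E^s$. Let $N\subset E$ be the union of all such $N(z,s)$ together with the non-Lebesgue points of each $\tilde k_{K_m}$; this is still null. By~(iii), $\varphi_{s,t+h}(z)=\varphi_{t,t+h}(\varphi_{s,t}(z))$, so whenever the scalar derivative $\partial_t\varphi_{s,t}(z)$ exists, its value depends on $(z,s)$ only through $w:=\varphi_{s,t}(z)\in D_t$, motivating a provisional definition of $G(w,t)$ as this common limit.

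The main technical step is to show that for every $s\in E'$ and every $t\in E^s\setminus N$ the family $g_h:=(\varphi_{s,t+h}-\varphi_{s,t})/h$ converges in $\Hol(D_s,\Complex)$ as $h\to0^+$. By factoring $\varphi_{s,\tau}=\varphi_{s',\tau}\circ\varphi_{s,s'}$ for some $s'\in\hat S$ with $s\le s'<t$ and invoking Remark~\ref{R_R}, we may assume $s\in\hat S$. The bound from~(iv) applied to an orbit tube $K'$ of a compact $K\subset D_s$ gives
$$\|g_h\|_K\le h^{-1}\int_t^{t+h}\tilde k_{K'}(\xi)\,d\xi,$$
which stays bounded as $h\to0^+$ because $t$ is a Lebesgue point of~$\tilde k_{K'}$ by construction; hence $\{g_h\}$ is normal in $\Hol(D_s,\Complex)$. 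Pointwise convergence on the dense subset $\hat Q\cap D_s$ holds by the choice of~$N$, so normality together with uniqueness of the cluster value forces $g_h$ to converge in $\Hol(D_s,\Complex)$. The cocycle identity~(iii) shows the limit depends on $z$ only through $\varphi_{s,t}(z)$, so it has the form $G(\cdot,t)\circ\varphi_{s,t}$ for a function $G(\cdot,t)$ holomorphic on~$\varphi_{s,t}(D_s)$. A Rouch\'e-type argument based on the smallness of $\varphi_{s,t}-\id$ for $s\to t^-$ (controlled by~(iv)) shows that $\bigcup_{s\in\hat S,\,s<t}\varphi_{s,t}(D_s)=D_t$, so the various pieces patch to a holomorphic function $G(\cdot,t)$ on all of~$D_t$. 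Measurability of $G(z,\cdot)$ and the local $L^d$ bound~WHVF3 are inherited in the limit from the corresponding properties of the difference quotients, while semicompleteness is tautological from the definition via~$\varphi$.

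The two main obstacles are (i) arranging a \emph{single} null set $N\subset E$ that works simultaneously for every starting time $s\in E'$, handled by the density reduction through $\hat S$ combined with the cocycle property~(iii); and (ii) promoting pointwise a.e.~differentiability of the scalar functions $t\mapsto\varphi_{s,t}(z)$ to differentiability in the Fr\'echet space $\Hol(D_s,\Complex)$, handled by the normal-family argument powered by the $L^d$ bound in~(iv).
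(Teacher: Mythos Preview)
Your proposal is essentially correct and follows the same architecture as the paper: prove~(a) directly from assertion~(iv); get normality of the difference quotients $g_h$ from Lebesgue points of finitely many dominating functions; upgrade pointwise differentiability on a countable set to convergence in $\Hol(D_s,\Complex)$ via a Vitali argument; and use the cocycle identity~(iii) to reduce to countably many starting times and thereby obtain a single null set~$N$.  Two small imprecisions are worth flagging.  First, you write that $t$ is a Lebesgue point of $\tilde k_{K'}$ ``by construction'', but your set~$N$ only excludes non-Lebesgue points of the functions $\tilde k_{K_m}$ attached to the fixed exhaustion~$(K_m)$; the orbit tube $K'$ is generally not one of these.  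The fix is immediate: since $K\times\{s\}\subset K_{m_0}$ for some~$m_0$, apply~(iv) with~$K_{m_0}$ rather than~$K'$ to bound $\|g_h\|_K$.  Second, your reduction through $s'\in\hat S$ with $s\le s'<t$ does not cover the endpoint case $t=s$ when $s\notin\hat S$, which is part of the assertion in~(b).

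On this last point your route and the paper's diverge.  The paper factors in the opposite direction, writing $\varphi_{s_1,t}=\varphi_{s,t}\circ\varphi_{s_1,s}$ with $s_1\le s$ taken from a sequence $s_n\searrow\inf E$; differentiability of $\varphi_{s_1,\cdot}$ at~$t$ then yields differentiability of $\varphi_{s,\cdot}$ restricted to the open set $U=\varphi_{s_1,s}(D_{s_1})$ (here Lemma~\ref{LM_non-const} is needed to ensure $U$ is open), and a second Vitali argument extends this to all of~$D_s$.  This direction automatically covers $t=s$, and it allows the paper to define $G(\cdot,t)$ in one stroke as $d\varphi_{s,t}/dt|_{s=t}$ on all of~$D_t$, bypassing your Rouch\'e-based gluing.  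Your direction is cleaner in that composition on the right with the fixed map $\varphi_{s,s'}$ immediately transfers differentiability without a second normal-family step, but you pay for it by having to patch $G(\cdot,t)$ from the images $\varphi_{s',t}(D_{s'})$ and by needing a separate argument at~$t=s$.  For measurability of $t\mapsto G(z,t)$, note that the paper invokes the continuity of $s\mapsto\varphi_{s,s+1/n}(z)$ (Lemma~\ref{LM_toODE_contin2}); your phrase ``inherited in the limit'' hides exactly this point.
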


The proof of Theorem~\ref{TH_sol_semicomplete} is preceded by the following three lemmas.

\begin{lemma}\label{LM_non-const}
Under the conditions of Theorem~\ref{TH_sol_semicomplete}, for each $s\in E$ and each $t\in E^s$,
the function $\varphi_{s,t}$ is not constant in~$D_s$.
\end{lemma}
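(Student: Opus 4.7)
The plan is to argue by contradiction, combining the cocycle law~(iii), the open mapping theorem, and the absolute--continuity estimate~(iv). Fix $s\in E$ and set
$$
A_s:=\{\,t\in E^s:\varphi_{s,t}\text{ is constant on }D_s\,\}.
$$
The first step is to show $A_s$ is relatively closed in $E^s$: estimate~(iv) with the first family--parameter equal to our fixed $s$ makes $t\mapsto\varphi_{s,t}(z)$ continuous for every $z\in D_s$, so any pointwise limit of constant functions remains constant. Moreover $s\notin A_s$, because $\varphi_{s,s}=\id_{D_s}$ and $D_s$ is a domain, hence contains at least two points. Thus, if $A_s\neq\emptyset$, then $t_0:=\inf A_s$ belongs to $A_s$ and satisfies $t_0>s$; denote by $c_*\in D_{t_0}$ the constant value of $\varphi_{s,t_0}$.

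The second step is to propagate the constancy backwards using (iii). For each $t\in[s,t_0)$, the minimality of $t_0$ forces $\varphi_{s,t}\colon D_s\to D_t$ to be non-constant, so its image is a non-empty open subset of $D_t$ by the open mapping theorem. Writing $\varphi_{s,t_0}=\varphi_{t,t_0}\circ\varphi_{s,t}$ shows that $\varphi_{t,t_0}$ equals $c_*$ on this open set, and the identity theorem applied in the domain $D_t$ then gives
$$
\varphi_{t,t_0}(z)=c_*\qquad\text{for all }z\in D_t\text{ and all }t\in[s,t_0).
$$

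The last step extracts a contradiction from~(iv). Since $\mathcal D$ is relatively open in $\Complex\times E$, I can select $z_0\in D_{t_0}$, $\varepsilon>0$, and $t_1\in(s,t_0)$ such that
$$K:=\overline{D(z_0,\varepsilon)}\times[t_1,t_0]\subset\mathcal D.$$
Applying (iv) to this $K$, with both the first and the middle time--variable equal to some $t\in[t_1,t_0)$ and the upper endpoint equal to $t_0$ -- legitimate because $(z,t)\in K$ for every $z\in\overline{D(z_0,\varepsilon)}$ -- yields
$$
|c_*-z|\;=\;\bigl|\varphi_{t,t_0}(z)-\varphi_{t,t}(z)\bigr|\;\le\;\int_t^{t_0}\tilde k_K(\xi)\,d\xi.
$$
As $\tilde k_K\in L^d_{\rm loc}(E,\Real)$, the right-hand side tends to $0$ as $t\to t_0^-$, whereas the left-hand side is a fixed positive number for any chosen $z\in\overline{D(z_0,\varepsilon)}\setminus\{c_*\}$. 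This contradiction forces $A_s=\emptyset$.

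The main obstacle is arranging the compact set $K$ \emph{before} letting $t\to t_0^-$: relative openness of $\mathcal D$ at $(z_0,t_0)$ is exactly what delivers a cylindrical compact neighborhood on which $\tilde k_K$ is a single $L^d$--function, so that its integral over the shrinking interval $[t,t_0]$ can be made to beat the fixed positive lower bound $|c_*-z|$.
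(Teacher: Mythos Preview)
Your argument is correct. It differs from the paper's proof in organization rather than in essential ingredients. The paper proceeds by bisection: assuming $\varphi_{s_0,t_0}$ is constant, it halves the interval repeatedly (using the same open-mapping/identity-theorem observation you use in Step~2) until the time interval is short enough that a cylinder $\overline{D(z_0,R)}\times[s,t]$ sits inside $\mathcal D$ \emph{and} $\int_s^t\tilde k_K<R$; then $|\varphi_{s,t}(z)-z|<R$ on the whole disk, contradicting constancy. You instead take the infimum $t_0$ of the constancy set $A_s$, show it is attained by a closedness argument, and push the constancy of $\varphi_{t,t_0}$ backward to all $t<t_0$; the contradiction then comes from letting $t\uparrow t_0$ in~(iv). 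Your route isolates the topological content (closedness of $A_s$, open mapping, identity theorem) more explicitly and avoids the double bisection; the paper's route is slightly more self-contained in that it never needs to argue separately that the infimum is attained. Both rely on exactly the same two facts---the factorization~(iii) combined with open mapping/identity, and the $L^d$-estimate~(iv) on a fixed cylindrical compact set---so neither is genuinely more elementary than the other.
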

\begin{proof}
Assume on the contrary that $\varphi_{s_0,t_0}$ is constant for some $s_0,t_0\in E$ with $s_0\le
t_0$. Take $u_0:=(s_0+t_0)/2$. Then $\varphi_{s_0,t_0}=\varphi_{u_0,t_0}\circ\varphi_{s_0,u_0}$
by~(iii). Hence either $\varphi_{s_0,u_0}$ is constant in $D_{s_0}$ or $\varphi_{u_0,t_0}$ is
constant in $D_{u_0}$. Repeating this procedure several times if necessary, we see that one could
assume from the very beginning that there exist $R>0$ and $z_0\in\Complex$ such that
$K:=\overline{D(z_0,R)}\times[s_0,t_0]\subset\mathcal D$.

Combining (ii) and (iv), we see that there exists non-negative $\tilde k_K\in L^d_{\rm
loc}\big(E,\Real\big)$ such that $|\varphi_{s,t}(z)-z|\le\int_{s}^{t}\tilde k_K(\xi)d\xi$ whenever
$|z-z_0|\le R$ and $s_0\le s\le t\le t_0$. Using again bisections of the interval, we conclude that
there exist $s,t\in[s_0,t_0]$ such that $s\le t$, $\varphi_{s,t}\equiv\const$, and
$\int_{s}^{t}\tilde k_K(\xi)d\xi<R$. Then $|\varphi_{s,t}(z)-z|<R$ for all $z\in
\overline{D(z_0,R)}$, which implies that $\varphi_{s,t}(z_0+R)\neq\varphi_{s,t}(z_0-R)$. The
contradiction finishes the proof.
\end{proof}

\begin{lemma}\label{LM_toODE_contin}
Let $z_0\in\Complex$, $t_0\in E$, and $E_{z_0}\cap(-\infty,t_0]\neq\emptyset$. Then under the
conditions of Theorem~\ref{TH_sol_semicomplete}, the map $E_{z_0}\cap(-\infty,t_0]\ni
s\mapsto\varphi_{s,t_0}(z_0)$ is continuous.
\end{lemma}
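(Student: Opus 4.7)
The plan is to treat right- and left-continuity of $s\mapsto\varphi_{s,t_0}(z_0)$ at an arbitrary $s_0\in E_{z_0}\cap(-\infty,t_0]$ separately, in both cases using the cocycle relation~(iii) to reduce the limit to a combination of (a) smallness of $\varphi_{s,s_0}(z_0)-z_0$ or $\varphi_{s_0,s}(z_0)-z_0$, which comes from (ii)+(iv), together with (b) continuity of a holomorphic map at $z_0$. Since $\mathcal D$ is open and $(z_0,s_0)\in\mathcal D$, one can fix $r,\delta>0$ so that the compact set $K:=\overline{D(z_0,r)}\times\big([s_0-\delta,s_0+\delta]\cap E\big)$ lies in $\mathcal D$, and let $\tilde k_K\in L^d_{\rm loc}(E,\R)$ be the function provided by property~(iv).

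For left-continuity, take $s\in E_{z_0}\cap[s_0-\delta,s_0)$. Property~(iv) applied with $u=s$, $t=s_0$ together with (ii) gives $|\varphi_{s,s_0}(z_0)-z_0|\le \int_s^{s_0}\tilde k_K(\xi)\,d\xi\to 0$. Relation (iii) yields $\varphi_{s,t_0}(z_0)=\varphi_{s_0,t_0}\big(\varphi_{s,s_0}(z_0)\big)$, and since $\varphi_{s_0,t_0}$ is holomorphic on $D_{s_0}$ by~(i), hence continuous at $z_0$, the claim follows. This already handles the degenerate case $s_0=t_0$, where only one-sided continuity is meaningful.

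For right-continuity at $s_0<t_0$, the same application of~(iv) yields $\varphi_{s_0,s}(z_0)\to z_0$ as $s\to s_0^+$, and (iii) reads $\varphi_{s_0,t_0}(z_0)=\varphi_{s,t_0}\big(\varphi_{s_0,s}(z_0)\big)$. The difficulty here is that the outer map $\varphi_{s,t_0}$ varies with $s$, so pointwise continuity of a single holomorphic function is not enough; what is needed is equicontinuity of the family $\{\varphi_{s,t_0}\}$ at $z_0$. The key observation is that property~(iv), applied for each $z\in\overline{D(z_0,r)}$ with $u=s$ and $t=t_0$, gives $|\varphi_{s,t_0}(z)-z|\le\int_{s_0-\delta}^{t_0}\tilde k_K(\xi)\,d\xi$, so the family $\{\varphi_{s,t_0}:\,s\in[s_0,\min(s_0+\delta,t_0)]\}$ is uniformly bounded on $\overline{D(z_0,r)}$. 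Cauchy's estimates then produce a common Lipschitz constant on $\overline{D(z_0,r/2)}$, and since $\varphi_{s_0,s}(z_0)$ is eventually contained in $D(z_0,r/2)$, this yields $|\varphi_{s,t_0}(z_0)-\varphi_{s,t_0}(\varphi_{s_0,s}(z_0))|\to 0$, which is exactly what is required. The main technical obstacle is precisely this step: turning the one-parameter family of holomorphic maps into a normal family by extracting a uniform $L^\infty$-bound from the integral estimate~(iv), after which equicontinuity comes for free.
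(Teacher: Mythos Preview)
Your proof is correct. The left-continuity argument is identical to the paper's. For right-continuity, however, you and the paper take different routes.

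The paper observes that $\varphi_{s_0,s}\to\id$ \emph{uniformly} on $\overline{D(z_0,R)}$, invokes Rouch\'e's theorem to produce a local inverse $\psi_{s_0,s}$ with $\varphi_{s_0,s}\circ\psi_{s_0,s}=\id_{D(z_0,R/2)}$ and $\psi_{s_0,s}(z_0)\to z_0$, and then writes $\varphi_{s,t_0}(z_0)=\varphi_{s_0,t_0}\big(\psi_{s_0,s}(z_0)\big)$, so that the limit follows from continuity of the single fixed holomorphic map $\varphi_{s_0,t_0}$ --- exactly the same mechanism as in the left-continuity step. Your approach instead keeps the $s$-dependent outer map $\varphi_{s,t_0}$ and controls it via equicontinuity: property~(iv) gives a uniform sup-bound on $\overline{D(z_0,r)}$, Cauchy's estimate converts this to a common Lipschitz constant on $\overline{D(z_0,r/2)}$, and $|\varphi_{s,t_0}(z_0)-\varphi_{s,t_0}(\varphi_{s_0,s}(z_0))|\to0$ follows. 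Both arguments are short and elementary; the paper's version has the aesthetic advantage of reducing right-continuity to the same picture as left-continuity, while yours avoids Rouch\'e altogether and makes more direct use of the quantitative estimate~(iv), which is arguably closer in spirit to the rest of Section~\ref{S_ODE}.
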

\begin{proof}
Fix an arbitrary $(z_0,s_0)\in\mathcal D$ and $t_0\in E^{s_0}$. We have to prove that
$\varphi_{s,t_0}(z_0)\to\varphi_{s_0,t_0}(z_0)$ as $s\to s_0$. The proof is in two steps.

\step1 First we prove continuity from the left. So assume that $s<s_0$ and that $s\in E_{z_0}$.
From~(ii) and~(iv) it follows that $\varphi_{s,s_0}(z_0)\to z_0$ as $s\to s_0-0$. Hence by (iii),
$\varphi_{s,t_0}(z_0)=\varphi_{s_0,t_0}\big(\varphi_{s,s_0}(z_0)\big)\to\varphi_{s_0,t_0}(z_0)$ as
$s\to s_0-0$.
\endstep
\step2 If $E\ni\sup E$ and $s_0=\sup E$, then the proof is finished. So we may suppose that
$s_0<s<\sup E$. Choose any $s_1>s_0$ and $R>0$ such that
$K:=\overline{D(z_0,R)}\times[s_0,s_1]\subset\mathcal D$. Then it follows from~(ii) and (iv) that
$\varphi_{s_0,s}(z)\to z$ uniformly in $\overline{D(z_0,R)}$ as $s\to s_0+0$, $s<s_1$. Using
Rouche's theorem one can easily show that for all $s\in(s_0,s_1)$ close to $s$ enough there exists
a function $\psi_{s_0,s}:D(z_0,R/2)\to\Complex$ such that
$\varphi_{s_0,s}\circ\psi_{s_0,s}=\id_{D(z_0,R/2)}$ and that $\psi_{s_0,s}(z_0)\to z_0$ as $s\to
s_0+0$. Then by (iii), $\varphi_{s,t_0}(z_0)=\varphi_{s_0,t_0}\big(\psi_{s_0,s}(z_0)\big)\to
\varphi_{s_0,t_0}(z_0)$ as $s\to s_0+0$.
\endstep
The proof is now finished.
\end{proof}

\begin{lemma}\label{LM_toODE_contin2}
Let $h>0$, $z_0\in\Complex$, and $\tilde E_{z_0}:=E_{z_0}\cap \tilde E\neq\emptyset$, where
${\tilde E:=\{s\in E:s+h\in E\}}$. Then under the conditions of Theorem~\ref{TH_sol_semicomplete},
the map $\tilde E_{z_0}\ni s\mapsto\varphi_{s,s+h}(z_0)$ is continuous.
\end{lemma}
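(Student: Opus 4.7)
Fix $s_0\in\tilde E_{z_0}$. The plan is to use the cocycle property~(iii) of Proposition~\ref{PR_sol_semicomplete} to rewrite $\varphi_{s,s+h}(z_0)$ as a composition in which the starting time is pinned at~$s_0$, and then control the resulting expression by combining property~(iv) with Cauchy's estimates. I would treat the two one-sided limits separately. For left-continuity I restrict to $s\in[s_0-h,s_0]\cap\tilde E_{z_0}$, so that $s\le s_0\le s+h$; property~(iii) then yields $\varphi_{s,s+h}(z_0)=\varphi_{s_0,s+h}\bigl(\varphi_{s,s_0}(z_0)\bigr)$, and I would split
\[
\varphi_{s,s+h}(z_0)-\varphi_{s_0,s_0+h}(z_0)=\bigl[\varphi_{s_0,s+h}(\varphi_{s,s_0}(z_0))-\varphi_{s_0,s+h}(z_0)\bigr]+\bigl[\varphi_{s_0,s+h}(z_0)-\varphi_{s_0,s_0+h}(z_0)\bigr].
\]
The second bracket tends to $0$ by (iv) (endpoint variation, fixed starting time $s_0$, compact set $K=\{(z_0,s_0)\}$). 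For the first, (ii)+(iv) force $\varphi_{s,s_0}(z_0)\to z_0$; applying (iv) with $K=\overline{D(z_0,2R)}\times\{s_0\}$ (a compact subset of $\mathcal D$ for small $R>0$) bounds the family $\{\varphi_{s_0,s+h}\}_{s}$ uniformly on $\overline{D(z_0,2R)}$, and Cauchy's inequality converts this into a uniform Lipschitz constant on $\overline{D(z_0,R)}$, making the first bracket $O(|\varphi_{s,s_0}(z_0)-z_0|)=o(1)$.

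For right-continuity ($s\searrow s_0$) a direct application of (iii) expresses $\varphi_{s_0,s+h}(z_0)$, not $\varphi_{s,s+h}(z_0)$, as a composition through $\varphi_{s_0,s}$. To invert this, I would reuse the Rouch\'e trick from Step~2 of the proof of Lemma~\ref{LM_toODE_contin}: since (ii)+(iv) give $\varphi_{s_0,s}\to\mathrm{id}$ uniformly on a small disk around $z_0$, Rouch\'e's theorem furnishes, for $s$ close enough to $s_0$, a local holomorphic inverse $\psi_{s_0,s}$ of $\varphi_{s_0,s}$ with $\psi_{s_0,s}(z_0)\to z_0$. The identity $z_0=\varphi_{s_0,s}(\psi_{s_0,s}(z_0))$ combined with (iii) yields
\[
\varphi_{s,s+h}(z_0)=\varphi_{s_0,s+h}\bigl(\psi_{s_0,s}(z_0)\bigr),
\]
after which the same splitting and estimates as in the left-continuity case finish the argument.

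The main technical point, in my view, is the \emph{uniform} equicontinuity of the holomorphic family $\{\varphi_{s_0,s+h}\}_{s\text{ near }s_0}$ at $z_0$; this is exactly what property~(iv) is designed for, since integrating its $L^d_{\mathrm{loc}}$-pointwise bound over the fixed compact interval $[s_0,s_0+h]$ yields uniform boundedness on a disk around $z_0$, which Cauchy's inequality then upgrades to uniform derivative control. Beyond that the argument is bookkeeping: checking that the compact sets used to invoke (iv) sit inside $\mathcal D$, and that the arguments $\varphi_{s,s_0}(z_0)$ or $\psi_{s_0,s}(z_0)$ stay inside the disk on which the Lipschitz bound applies, both of which follow from the openness of $\mathcal D$ and from taking $s$ sufficiently close to~$s_0$.
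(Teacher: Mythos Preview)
Your argument is correct, but the paper's proof is considerably shorter because it exploits property~(iv) more directly. You pin the starting time at~$s_0$ via the cocycle relation and then work to establish equicontinuity of $\{\varphi_{s_0,s+h}\}$ (plus a Rouch\'e inversion for the right-sided limit). The paper instead splits
\[
\varphi_{s,s+h}(z_0)-\varphi_{s_0,s_0+h}(z_0)=\bigl[\varphi_{s,s+h}(z_0)-\varphi_{s,s_0+h}(z_0)\bigr]+\bigl[\varphi_{s,s_0+h}(z_0)-\varphi_{s_0,s_0+h}(z_0)\bigr],
\]
keeping the \emph{starting time~$s$} in the first bracket and the \emph{endpoint~$s_0+h$} in the second. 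The point is that~(iv) is stated uniformly for $(z,s)$ ranging over a compact set, so with $K=\{z_0\}\times I$ (where $I$ is a short interval around~$s_0$) the first bracket is bounded by $\bigl|\int_{s_0+h}^{s+h}\tilde k_K(\xi)\,d\xi\bigr|\to 0$, with no cocycle step, no Cauchy estimate, and no case distinction between $s<s_0$ and $s>s_0$. The second bracket is exactly the statement of Lemma~\ref{LM_toODE_contin} with $t_0=s_0+h$, so nothing further is needed. Your route works, but it effectively re-derives pieces of Lemma~\ref{LM_toODE_contin} rather than invoking it; the paper's decomposition leverages the uniformity in~$s$ built into~(iv) and finishes in three lines.
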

\begin{proof}
Fix arbitrary $(z_0,s_0)\in\mathcal D$ with $s_0\in\tilde E$. Since $\mathcal D$ is relatively open
in $\Complex\times E$, there exists $\delta\in(0,h/2)$ such that $\{z_0\}\times I$, where
$I:=[s_0-\delta,s_0+\delta]\cap \tilde E$, is a compact subset of $\mathcal D$. We have to prove
that $|\varphi_{s,s+h}(z_0)-\varphi_{s_0,s_0+h}(z_0)|\to0$ as $s\to s_0$, $s\in I$.

Note that $s_0\in I$. Moreover, for any $s\in I$ we have $s\le s_0+h$. Hence by (iv) with
$K:=\{z_0\}\times I$, we have $$\big|\varphi_{s,s+h}(z_0)-\varphi_{s,s_0+h}(z_0)\big|\le
\left|\int_{s_0+h}^{s+h}\tilde k_K(\xi)\,d\xi\right|,$$ where $\tilde k_K\in L^d_{\rm
loc}(E,[0,+\infty))$. Hence $|\varphi_{s,s+h}(z_0)-\varphi_{s,s_0+h}(z_0)|\to0$ as $s\to s_0$,
$s\in I$. To complete the proof it only remains to apply Lemma~\ref{LM_toODE_contin} with
$t_0:=s_0+h$.
\end{proof}

\begin{proof}[\bf Proof of Theorem~\ref{TH_sol_semicomplete}] First of all we notice that
statement~(a) follows directly from assertion~(iv) applied with $K:=K'\times\{s\}$, where $s\in E$
and $K'$ is an arbitrary compact subset of~$D_s$.

Let us prove (b). Consider any countable expanding system~$(K_n)$ of compact
sets ${K_n\subset \mathcal D}$ such that
$\cup_{n\in\Natural}\mathop{\mathrm{int}}K_n=\mathcal D$, where
$\mathop{\mathrm{int}} A$ stands for the interior of a set~$A$. (Such a system
exists in any locally compact separable metric space.) To simplify the notation
we will denote by $k_n$ the function $\tilde k_{K_n}$ from assertion~(iv).

Fix any $t\in E$ and any $n\in\Natural$. If $k_n(t)<+\infty$ and $t$ is a
Lebesgue point of~$k_n$, then
\begin{equation}\label{EQ_ODE_star}
C_n(t):=\sup\left\{\left|\frac{1}{t'-t}\int_{t'}^{t}k_n(\xi)\,d\xi\right|:\,t'\in
E,\,0<|t'-t|<1\right\}<+\infty.
\end{equation}
Hence~\eqref{EQ_ODE_star} holds for all $t\in E$ aside some null-set~$M_n\subset E$ containing all
non-Lebesgue points of~$k_n$. Set $M:=\cup_{n\in\Natural} M_n$.

Fix any $s\in E':=E\setminus\{\sup E\}$. Let $K\subset D_{s}$ be a compact set.
Then, by construction, there exists $n\in\Natural$ such that
$K\times\{s\}\subset K_n$. Therefore, by~\eqref{EQ_ODE_star},
\begin{equation}\label{EQ_ODE_exclam}
\left\|\frac{\varphi_{s,t'}-\varphi_{s,t}}{t'-t}\right\|_K\le
\left|\frac{1}{t'-t}\int_{t'}^{t}k_n(\xi)\,d\xi\right|\le C_n(t)<+\infty
\end{equation}
for all $t\in E^s\setminus M$ and all $t'\in E^s$ such that $0<|t'-t|<1$.

It follows from~(a) that $E^s\ni t\mapsto \varphi_{s,t}(z)$ is locally
absolutely continuous for any ${(z,s)\in\mathcal D}$. Hence there exists a
null-set $N(z,s)\subset E^s$ such that $d\varphi_{s,t}(z)/dt$ exists for all
$t\in E^s\setminus N(z,s)$. Fix an arbitrary $s\in E'$ and take any sequence
$(z_k)\subset D_s$ with at least one accumulation point in~$D_s$. Set
$N(s):=\left[\cup_{k\in\Natural}N(z_k,s)\right]\cup M$. Then, owing
to~\eqref{EQ_ODE_exclam}, Vitali's principle implies that for any~$t\in
E^s\setminus N(s)$, there exists a finite limit $\lim_{t'\to
t}(\varphi_{s,t'}-\varphi_{s,t})/(t'-t)$ in the topology
of~$\Hol(D_s,\Complex)$, i.e. $t\mapsto \varphi_{s,t}\in\Hol(D_s,\Complex)$ is
differentiable aside~$N(s)$.

Now take any $s_1\le s$, $s_1\in E$. We claim that $t\mapsto \varphi_{s,t}\in\Hol(D_s,\Complex)$ is
also differentiable for all $t\in E^s\setminus N(s_1)$. Indeed, by the above argument, $t\mapsto
\varphi_{s_1,t}$ is differentiable for all $t\in E^{s_1}\setminus N(s_1)\supset E^{s}\setminus
N(s_1)$. Since $\varphi_{s_1,t}=\varphi_{s,t}\circ\varphi_{s_1,s}$ for all $t\in E^s$, we can
conclude that $t\mapsto\varphi_{s,t}|_U$, where $U:=\varphi_{s_1,s}(D_{s_1})$, is differentiable
for all $t\in E^s\setminus N(s_1)$. By Lemma~\ref{LM_non-const}, $U\subset D_s$ is a domain. Hence
using again~\eqref{EQ_ODE_exclam} and Vitali's principle, we conclude that $t\mapsto \varphi_{s,t}$
is differentiable in $t\in E\setminus N(s_1)$.

If $E\ni\inf E$, we set $s_1:=\min E$ and $N':=N(s_1)$. If $E\not\ni\inf E$, we take any decreasing
sequence $(s_n)\subset E$ such that $s_n\to\inf E$ as $n\to+\infty$ and define
$N':=\cup_{n\in\Natural}N(s_n)$.

Finally, for the case $E\not\ni\sup E$ we set $N:=N'$. Otherwise, we set $N:=N'\cup \{\max E\}$.
Now the proof of~(b) is finished.

We are left with statement~(c). Define the function $G:\mathcal D\to\Complex$
in the following way: $G(\cdot,t):=d\varphi_{s,t}/dt|_{s:=t}$ for all $t\in
E\setminus N$, and $G(\cdot,t)\equiv0$ for all $t\in N$. Then, from
assertion~(iii), it immediately follows
$d\varphi_{s,t}/dt=G(\cdot,t)\circ\varphi_{s,t}$ for all $t\in E\setminus N$
and all $s\in E\cap(-\infty,t]$. So it remains to show that $G$ is a
semicomplete weak holomorphic field of order $d$.

First of all, $G(\cdot,t)\in\Hol(D_t,\Complex)$ for all $t\in E$ by construction. Further, for any
fixed $z\in\Complex$ such that $E_z\neq\emptyset$, we have $n\big(\varphi_{s,s+1/n}(z)-z)\to
G(z,s)$ as $n\to+\infty$ for a.e. $s\in E_z$. By Lemma~\ref{LM_toODE_contin2}, for each fixed
$n\in\Natural$ the function $s\mapsto\varphi_{s,s+1/n}$ is continuous. It follows that $G(z,\cdot)$
is measurable on~$E_z$. Therefore, $G$ satisfies conditions WHVF1 and WHVF2 from
Definition~\ref{D_WHVF}

Let us check condition WHVF3. Fix any compact set $K\subset\mathcal D$. By
construction, there exists $n\in\Natural$ such that $K\subset K_n$. Let
$(z,t)\in K$. If $t\in N$, then trivially we have $0=|G(z,t)|\le k_n(t)$. So
assume that $t\not\in N$. By construction, $t$ is a Lebesgue point for~$k_n$.
Hence, on the one hand,
$$%
Q(t,t'):=\left|\frac1{t'-t}\int_t^{t'}k_n(\xi)\,d\xi\right|\to k_n(t)\quad\text{as $t'\to t$.}
$$%
On the other hand, $G(z,t)=\lim_{t'\to t+0}\big(\varphi_{t,t'}(z)-z\big)/(t'-t)$. Passing in the
inequality $|\varphi_{t,t'}(z)-z|/|t'-t|\le Q(t,t')$ to the limit as $t'\to t+0$, we again obtain
$|G(z,t)|\le k_n(t)$.

Note that $I(K):=\pr_{\mathbb R}(K)$ is compact. Thus to finish the proof of
WHVF3, it is now sufficient to set $k_K:=k_n|_{I(K)}$.

Finally, by the above arguments, for any $(z,s)\in\mathcal D$, $E^s\ni t\mapsto\varphi_{s,t}(z)$
solves the initial value problem~\eqref{EQ_CarODE-IVP}. Thus the vector field $G$ is semicomplete.
This finishes the proof.
\end{proof}

\section{Evolution families in simply connected domains}
\label{S_evolSimply}

The notion of evolution family is one of the three central notions in modern
Loewner Theory in simply connected domains. In this paper we will use the
following definition of evolution family in a domain of the complex plane,
which for the case of the unit disk~$\UD$ was formulated in~\cite{BCM1}.

\begin{definition}\label{D_EV-simply}
Let $D$ be a domain in~$\Complex$ and $d\in[1,+\infty]$. A family
$(\varphi_{s,t})_{0\le s\le t<+\infty}$ of holomorphic self-maps
$\varphi_{s,t}:D\to D$ is said to be an {\it evolution family} of
order $d$  (or, in short, {\it $L^d$-evolution family}) in the
domain~$D$ if it satisfies the following three conditions:
\begin{mylist}
\item[EF1.] $\varphi_{s,s}=\mathsf{id}_{D},$

\item[EF2.] $\varphi_{s,t}=\varphi_{u,t}\circ\varphi_{s,u}$ whenever $0\le
s\le u\le t<+\infty,$

\item[EF3.] for any $T>0$ and any $z\in D$ there exists a
non-negative function ${k_{z,T}\in
L^{d}\big([0,T],\mathbb{R}\big)}$ such that
\[
\big|\varphi_{s,t}(z)-\varphi_{s,u}(z)\big|\leq\int_{u}^{t}k_{z,T}(\xi)d\xi
\]
whenever $0\leq s\leq u\leq t\leq T.$
\end{mylist}
\end{definition}

One can show that all non-trivial cases of evolution families {\it in a (fixed) domain} can be
reduced in one or another way to the case when $D=\UD$. We will not go into details, except for
proving the following result, which will be applied further in the paper.

\begin{proposition}\label{PR_conf-inv}
Let $d\in[1,+\infty]$. Let $(\varphi_{s,t})_{0\le s\le t}$ be a
family of holomorphic self-maps of the unit disk~$\UD$ and
$F:\UD\to\Complex$ any holomorphic univalent function. Then the
formula $\Phi_{s,t}=F\circ\varphi_{s,t}\circ F^{-1}$, $t\ge0$,
$s\in[0,t]$, defines an $L^d$-evolution family in~$D:=F(\UD)$ if
and only if $(\varphi_{s,t})$ is an $L^d$-evolution family
in~$\UD$.
\end{proposition}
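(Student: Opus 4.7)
Since $F^{-1}:D\to\UD$ is also holomorphic and univalent, it suffices by symmetry to prove only the ``only if'' direction: assuming $(\varphi_{s,t})$ is an $L^d$-evolution family in $\UD$, show that $(\Phi_{s,t})$ is one in $D$. Conditions EF1 and EF2 for $(\Phi_{s,t})$ are immediate by direct substitution; indeed $\Phi_{s,s}=F\circ\id_{\UD}\circ F^{-1}=\id_{D}$, while
$$
\Phi_{u,t}\circ\Phi_{s,u}
=F\circ\varphi_{u,t}\circ F^{-1}\circ F\circ\varphi_{s,u}\circ F^{-1}
=F\circ\varphi_{s,t}\circ F^{-1}=\Phi_{s,t}.
$$

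The main task is to verify EF3. Fix $w\in D$ and $T>0$, and set $z:=F^{-1}(w)\in\UD$. Since
$$
\Phi_{s,t}(w)-\Phi_{s,u}(w)=F\big(\varphi_{s,t}(z)\big)-F\big(\varphi_{s,u}(z)\big),
$$
the key intermediate claim is that the orbit $\mathcal{O}:=\{\varphi_{s,t}(z):0\le s\le t\le T\}$ is a compact subset of~$\UD$. Granted the claim, $F$ is globally Lipschitz on a suitable compact neighborhood of $\overline{\mathcal{O}}$ in $\UD$ with some constant $L>0$, and then EF3 for $(\varphi_{s,t})$ gives
$$
\big|\Phi_{s,t}(w)-\Phi_{s,u}(w)\big|\le L\,\big|\varphi_{s,t}(z)-\varphi_{s,u}(z)\big|\le L\int_{u}^{t}k_{z,T}(\xi)\,d\xi,
$$
so one can take $K_{w,T}:=L\cdot k_{z,T}\in L^{d}([0,T],\R)$ to witness EF3 for $(\Phi_{s,t})$.

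The main obstacle is the compactness of $\mathcal{O}$. From EF3 with $u=s$ we get $|\varphi_{s,t}(z)-z|\le\int_{0}^{T}k_{z,T}(\xi)\,d\xi$, so $\mathcal{O}$ is bounded in~$\Complex$, but this alone does not rule out limit points on $\partial\UD$. The plan is to show that $(s,t)\mapsto\varphi_{s,t}(z)$ is jointly continuous on the compact triangle $\Delta:=\{(s,t):0\le s\le t\le T\}$; then $\mathcal{O}$ is the continuous image of a compact set, hence a compact subset of $\Complex$, and since it is contained in the open set $\UD$, it is compactly contained there. Continuity in $t$ alone is immediate from EF3. For continuity in $s$, given $(s_n,t_n)\to(s_*,t_*)$ in $\Delta$, pass to a subsequence with either $s_n\le s_*$ or $s_n\ge s_*$; in the first case use the EF2 identity $\varphi_{s_n,t_n}=\varphi_{s_*,t_n}\circ\varphi_{s_n,s_*}$ together with $\varphi_{s_n,s_*}(z)\to z$ (from EF3) and the local equicontinuity of the family $\{\varphi_{s_*,t}:t\in[s_*,T]\}$ on a compact neighborhood of $z$, a standard consequence of the Schwarz--Pick inequality applied to holomorphic self-maps of~$\UD$. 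The second case is handled symmetrically with $\varphi_{s_*,t_n}=\varphi_{s_n,t_n}\circ\varphi_{s_*,s_n}$, using the uniform local Lipschitz estimate on $\{\varphi_{s_n,t_n}\}$. Combining these with the continuity in~$t$ via the triangle inequality yields the joint continuity, and hence the compactness claim that closes the proof.
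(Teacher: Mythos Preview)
Your proof is correct and follows essentially the same strategy as the paper: verify EF1 and EF2 by direct substitution, reduce EF3 to the compactness of the orbit $\{\varphi_{s,t}(z):0\le s\le t\le T\}$ in~$\UD$, and then invoke the Lipschitz property of $F$ on compact subsets (the paper isolates this as a separate Lemma~\ref{LM_well-known}). The only difference is that the paper obtains the compactness by citing the joint continuity of $(s,t)\mapsto\varphi_{s,t}$ as a known fact (Lemma~\ref{LM_contD}, i.e.\ \cite[Proposition~3.5]{BCM1}), whereas you sketch this continuity directly via EF2, EF3, and the Schwarz--Pick contraction; one minor gap in your sketch is that the factorization $\varphi_{s_n,t_n}=\varphi_{s_*,t_n}\circ\varphi_{s_n,s_*}$ needs $t_n\ge s_*$, but this holds for large $n$ when $t_*>s_*$, and the boundary case $t_*=s_*$ follows immediately from EF3.
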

The proof is based on following
\begin{lemma}\label{LM_contD}
Let $(\Phi_{s,t})$ be an $L^d$-evolution family in a hyperbolic
domain~$D$ (i.e. admitting a hyperbolic metric). Then the mapping
$\{(s,t):0\le s\le t\le T\}\ni(s,t)\mapsto \Phi_{s,t}\in\Hol(D,D)$
is continuous.
\end{lemma}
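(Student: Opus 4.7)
The plan is to exploit normality of $\Hol(D,D)$ (which holds because $D$ is hyperbolic) together with the semigroup property EF2 in order to sandwich $\Phi_{s_n,t_n}$ between factors that are easy to control individually.

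First I would establish a \emph{small-interval lemma}: whenever $a_n\le b_n$ in $[0,T]$ both tend to a common limit $c$, the maps $\Phi_{a_n,b_n}$ converge to $\id_D$ locally uniformly in $D$. Pointwise convergence follows at once from EF1 and EF3, since for every $z\in D$
\begin{equation*}
|\Phi_{a_n,b_n}(z)-z|=|\Phi_{a_n,b_n}(z)-\Phi_{a_n,a_n}(z)|\le\int_{a_n}^{b_n}k_{z,T}(\xi)\,d\xi\longrightarrow 0
\end{equation*}
by absolute continuity of the Lebesgue integral. Normality of the family $\{\Phi_{s,t}\}\subset\Hol(D,D)$ then upgrades pointwise to locally uniform convergence, because every subsequential locally uniform limit must agree with $\id_D$ on the dense set $D$ and hence equal $\id_D$.

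For the main argument, fix $(s_0,t_0)$ and an arbitrary sequence $(s_n,t_n)\to(s_0,t_0)$ in $\{(s,t):0\le s\le t\le T\}$. I would set $u_n:=\min(s_n,s_0)$ and $v_n:=\max(t_n,t_0)$, so that $u_n\le s_0\le t_0\le v_n$ and $u_n\le s_n\le t_n\le v_n$, with $u_n\to s_0$ and $v_n\to t_0$. Two applications of EF2 to the pair $(u_n,v_n)$, factored through $\{s_0,t_0\}$ and through $\{s_n,t_n\}$ respectively, yield the double identity
\begin{equation*}
\Phi_{t_0,v_n}\circ\Phi_{s_0,t_0}\circ\Phi_{u_n,s_0}\;=\;\Phi_{u_n,v_n}\;=\;\Phi_{t_n,v_n}\circ\Phi_{s_n,t_n}\circ\Phi_{u_n,s_n}.
\end{equation*}
The small-interval lemma shows that each of the four outer factors $\Phi_{u_n,s_0},\Phi_{t_0,v_n},\Phi_{u_n,s_n},\Phi_{t_n,v_n}$ converges to $\id_D$ locally uniformly. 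Using normality, I would pass to a subsequence along which $\Phi_{s_n,t_n}\to\Psi$ locally uniformly in $D$ for some $\Psi\in\Hol(D,\overline D)$. Letting $n\to\infty$ in the identity: the left-hand side tends to $\Phi_{s_0,t_0}$ and the right-hand side to $\Psi$, forcing $\Psi=\Phi_{s_0,t_0}$. Since every subsequential locally uniform limit of $\Phi_{s_n,t_n}$ thus coincides with $\Phi_{s_0,t_0}$, normality forces $\Phi_{s_n,t_n}\to\Phi_{s_0,t_0}$ itself, which is the desired continuity.

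The only technical step requiring care is the stability of triple composition under locally uniform convergence when all three factors vary with $n$. I would handle this by the standard fact that, for a normal family of self-maps of a hyperbolic domain, the image $g_n(K)$ of any compact set $K\subset D$ remains in a fixed compact subset of $D$; hence $f_n\circ g_n\to f\circ g$ locally uniformly whenever $f_n\to f$ and $g_n\to g$ locally uniformly, and iterating gives the limits of both sides of the identity above.
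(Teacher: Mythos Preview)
The paper does not actually prove this lemma; it cites \cite[Proposition~3.5]{BCM1} for the case $D=\UD$ and asserts that the same argument carries over to any hyperbolic domain. Your write-up is therefore supplying content the paper omits, and your overall strategy --- the small-interval lemma together with the sandwich $u_n=\min(s_n,s_0)$, $v_n=\max(t_n,t_0)$ --- is sound and efficient.

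There is, however, one genuine gap in the final paragraph. The ``standard fact'' you invoke is false as stated: a normal family of holomorphic self-maps of a hyperbolic domain need \emph{not} send a fixed compact $K$ into a fixed compact subset of~$D$ (e.g.\ the constants $g_n\equiv 1-1/n$ in~$\UD$). The place where this bites is the outer factor on the right-hand side. After showing $\Phi_{s_n,t_n}\circ\Phi_{u_n,s_n}\to\Psi$, you still have to apply $\Phi_{t_n,v_n}$ to points that may, a priori, drift to~$\partial D$ if $\Psi$ happens to be a boundary constant; convergence $\Phi_{t_n,v_n}\to\id_D$ on compacta then tells you nothing, and the Schwarz--Pick contraction only gives an inequality in the wrong direction.

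The fix is immediate: use EF3 once more to strip off the outer factor directly. From the left-hand side you already have $\Phi_{u_n,v_n}\to\Phi_{s_0,t_0}$ locally uniformly. Now EF3 gives, for each $z\in D$,
\[
|\Phi_{u_n,v_n}(z)-\Phi_{u_n,t_n}(z)|\le\int_{t_n}^{v_n}k_{z,T}(\xi)\,d\xi\longrightarrow 0,
\]
so $\Phi_{u_n,t_n}\to\Phi_{s_0,t_0}$ as well. Since $\Phi_{u_n,t_n}=\Phi_{s_n,t_n}\circ\Phi_{u_n,s_n}$ involves only the inner factor (which \emph{does} keep compacts inside a fixed compact, because it tends to~$\id_D$), passing to the limit along your subsequence now legitimately yields $\Psi=\Phi_{s_0,t_0}$, and the rest of your argument goes through unchanged.
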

The above lemma was proved for the case $D:=\UD$ in \cite[Proposition 3.5]{BCM1}. The same argument
with obvious modifications works for any hyperbolic domain~$D$. Therefore we omit the proof.

We will also take advantage of the following well-known statement.
\begin{lemma}\label{LM_well-known}
Let $U\subset\Complex$ be a domain and $F:U\to\Complex$ a holomorphic function.
Then for any compact set $K\subset
U$ there exists a constant $C(K)>0$ such that\\
$$|F(z_1)-F(z_2)|\le C(K)|z_1-z_2|\quad \text{whenever $z_1,z_2\in K.$}$$
\end{lemma}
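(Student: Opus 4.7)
The plan is to show this by combining a local Lipschitz estimate, coming from bounding $F'$ on a slight thickening of $K$, with a trivial global bound for points that are far apart. Since $K$ is a compact subset of the open set $U$, I would first observe that
\[
r := \tfrac{1}{2}\mathrm{dist}(K,\Complex\setminus U) > 0
\]
(with the convention $r := 1$ if $U=\Complex$), and the closed $r$-neighborhood $K_r := \{z\in\Complex : \mathrm{dist}(z,K)\le r\}$ is a compact subset of $U$. Hence $M := \sup_{z\in K_r}|F'(z)|$ is finite, as is $N := \sup_{z\in K}|F(z)|$.

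Now for any $z_1,z_2\in K$ I would split into two cases. If $|z_1-z_2|\le r$, then the straight line segment $[z_1,z_2]$ lies entirely in $K_r\subset U$, so
\[
|F(z_1)-F(z_2)| \;=\; \left|\int_{[z_1,z_2]} F'(\zeta)\,d\zeta\right| \;\le\; M\,|z_1-z_2|.
\]
If instead $|z_1-z_2|>r$, then the trivial estimate gives
\[
|F(z_1)-F(z_2)| \;\le\; 2N \;\le\; \frac{2N}{r}\,|z_1-z_2|.
\]
Setting $C(K) := \max\{M,\, 2N/r\}$ yields the desired inequality for all $z_1,z_2\in K$.

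The only subtlety is that $K$ need not be connected (or even starlike-with-respect-to-anything), so one cannot directly integrate $F'$ along a segment joining two arbitrary points of $K$; the case split above circumvents this by using the derivative bound only when the segment is known to stay in $U$. Apart from this small point, the argument is entirely routine, and I do not expect any genuine obstacle.
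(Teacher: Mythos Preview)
Your proof is correct. The paper, however, takes a different and somewhat slicker route: it simply observes that the ratio
\[
R(z_1,z_2):=\frac{|F(z_1)-F(z_2)|}{|z_1-z_2|},\qquad z_1\neq z_2,
\]
extends continuously to all of $U\times U$ by setting $R(z,z):=|F'(z)|$, and then takes $C(K)$ to be the maximum of this continuous function on the compact set $K\times K$. This avoids any case split or explicit thickening of $K$; it also works verbatim for real-differentiable functions with continuous derivative, since only $C^1$-regularity is used. Your argument, by contrast, gives an explicit constant in terms of $\sup_{K_r}|F'|$ and $\sup_K|F|$, which can be useful if one wants a quantitative bound, and your case split neatly handles the potential non-convexity of $K$. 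Both arguments are entirely standard; the paper's is just more compact.
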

\begin{proof}
The lemma follows immediately from the fact that the function
$R(z_1,z_2):=|F(z_1)-F(z_2)|/|z_1-z_2|$, $z_1,z_2\in U$, $z_1\neq z_2$, can be continuously
extended to $U\times U$ by setting $R(z,z):=|F'(z)|$ for all $z\in U$.
\end{proof}

\begin{proof}[\bf Proof of Proposition~\ref{PR_conf-inv}]
Let $F$ be any of the conformal mappings of $\UD$ onto $D$.

Let us assume first that $(\varphi_{s,t})$ is an $L^d$-evolution family in~$\UD$. We have to prove
that $(\Phi_{s,t})$ is an $L^d$-evolution family in~$D$. It easy to see that $(\Phi_{s,t})$
satisfies conditions EF1 and EF2. We have to prove only EF3.

By Lemma~\ref{LM_contD} the mapping $(s,t)\mapsto\varphi_{s,t}\in\Hol(\UD,\UD)$
is continuous. Hence the set $K_{z,T}:=\{\varphi_{s,t}(z):0\le s\le t\le
T\}\subset\UD$ is compact for any fixed $z\in\UD$ and $T>0$.  Therefore, by
Lemma~\ref{LM_well-known} with $U:=\UD$, there exists $C=C(K_{z,T})>0$ such
that
$$\big|\Phi_{s,t}(w)-\Phi_{s,u}(w)\big|\le C(K_{z,T})\big|\varphi_{s,t}(z)-\varphi_{s,u}(z)\big|,\quad w:=F(z),$$
for all $s$, $u$, and $t$ satisfying $0\le s\le u\le t\le T$. Now the fact that
$(\Phi_{s,t})$ satisfies EF3 follows immediately.

The converse statement, i.e. the fact that $(\varphi_{s,t})$ is an
$L^d$-evolution family provided so is $(\Phi_{s,t})$, can be
proved in a symmetric way.
\end{proof}

The authors of~\cite{BCM1} established a deep relationship between evolution
families in the unit disk~$\UD$ and Carath\'eodory non-autonomous differential
equations, driven by the so-called {\it Herglotz vector fields}.
\begin{definition}\label{D_HVF}
A {\it Herglotz vector field} of order~$d$ (in short an {\it $L^d$-Herglotz vector field}) in a
simply connected domain~$D\varsubsetneq\Complex$  is a weak holomorphic vector field $G$ of
order~$d$ in $\mathcal D:=D\times[0,+\infty)$ such that for a.\,e. fixed $t\ge0$ the function
$G(\cdot, t)$ is an infinitesimal generator in $D$.
\end{definition}

\begin{remark}\label{RM_conf-inv-HVF}
For the notion of infinitesimal generator and the theory behind it we refer the reader
to~\cite[\S1.4.1]{Abate} or~\cite[Section~2]{BCM1}. According to the Berkson\,--\,Porta
representation~\cite{Berkson-Porta}, the set of all infinitesimal generators in~$\UD$ conincides
with the set of all functions given by the formula $G(z)=(\tau-z)(1-\bar\tau z)p(z)$, $z\in\UD$,
where $\tau$ is an arbitrary point in the closed unit disk~$\overline\UD$ and $p:\UD\to \Complex$
is an arbitrary holomorphic function satisfying the inequality~$\Re p(z)\ge0$ for all $z\in\UD$.
Moreover, if $F:\UD\to\Complex$ is a holomorphic univalent function, then the infinitesimal
generators in~$D:=F(\UD)$ are exactly the functions~$G_D$ given by the formula
$G_D(w)=F'\big(F^{-1}(w)\big)G\big(F^{-1}(w)\big)$, $w\in D$, where $G$ is an arbitrary
infinitesimal generator in~$\UD$. The non-autonomous version of this statement follows easily:
given $d\in[1,+\infty]$, the function $G:\UD\times[0,+\infty)$ is a Herglotz vector field of
order~$d$ in~$\UD$ if and only if the formula
$G_D(w,t):=F'\big(F^{-1}(w)\big)G\big(F^{-1}(w),t\big)$, $w\in D$, $t\ge0$, defines a Herglotz
vector field $G_D$ of order~$d$ in the domain~$D$.
\end{remark}
\begin{remark}\label{RM_infDW}
One of the immediate consequences of the above remark is that an infinitesimal generator
$G_D:D\to\Complex$ can have at most one zero in~$D$ unless it vanishes identically.
\end{remark}

In~\cite{BCM1} it was proved that {\it every Herglotz vector field in $\UD$ is
semicomplete} \cite[Theorem 4.4]{BCM1}. Moreover, there is a one-to-one
correspondence between these vector fields and evolution families. Namely, {\it
for every Herglotz vector field $G:\UD\times[0,+\infty)\to\Complex$ of
order~$d$ there exists a unique $L^d$-evolution family~$(\varphi_{s,t})$
generated by the vector field $G$ in the following sense: for any $z\in\UD$ and
any $s\ge0$, the function $[s,+\infty)\ni t\mapsto \varphi_{s,t}(z)$ solves the
initial value problem $\dot w=G(w,t)$, $w(s)=z$}~\cite[Theorem~5.2]{BCM1}.
Conversely, {\it every $L^d$-evolution family~$(\varphi_{s,t})$ in~$\UD$ is
generated by a unique (up to a null-set) Herglotz vector field
$G:\UD\times[0,+\infty)\to\Complex$ of order $d$}~\cite[Theorem~6.2]{BCM1}.
Using Proposition~\ref{PR_conf-inv} and Remark~\ref{RM_conf-inv-HVF} one can
easily conclude that these results hold also with~$\UD$ replaced by any simply
connected domain~$D\varsubsetneq\Complex$. Hence, taking into account
Proposition~\ref{PR_sol_semicomplete} with $\UD\times[0,+\infty)$ substituted
for~$\mathcal D$, we can state Theorem~6.2 from~\cite{BCM1} in the following, a
bit stronger form:
\begin{theorem}\label{TH_SC-EF-Diff}
Let $(\varphi_{s,t})$ be an $L^d$-evolution family in a simply
connected domain~$D\varsubsetneq\Complex$. The following
statements hold:
\begin{mylist}
\item[(i)] for any $s\ge 0$ the mapping $[s,+\infty)\ni t\mapsto
\varphi_{s,t}\in\Hol(D,D)$ is locally absolutely continuous;

\item[(ii)] moreover, the above assertion (i) holds locally uniformly w.r.t. $s$, i.e. for any $T>0$ and any  $K\subset\subset D$
there exists a non-negative function $k_{K,T}\in L^d\big([0,T],\Real\big)$, not depending on $s$,
such that
$$\|\varphi_{s,t}-\varphi_{s,u}\|_K\le\int_{u}^{t}k_{K,T}(\xi)d\xi$$ for any $s,u,t\in[0,T]$ such
that $s\le u\le t$.

\item[(iii)] there exists a Herglotz vector field $G:[0,+\infty)\to\Hol(D,\Complex)$; $t\mapsto G_t$ of order
$d$ and a null-set $N\subset[0,+\infty)$ such that for any $s\ge0$ the mapping $[s,+\infty)\ni
t\mapsto \varphi_{s,t}\in\Hol(D,\Complex)$ is differentiable aside~$N$ and for each
$t\in[s,+\infty)\setminus N$ we have $(d/dt)\varphi_{s,t}=G_t\circ\varphi_{s,t}$.
\end{mylist}
\end{theorem}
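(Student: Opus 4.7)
The plan is to reduce the theorem to the case $D=\UD$ via Proposition~\ref{PR_conf-inv} and then to combine Theorems~4.4 and~6.2 of~\cite{BCM1} with the quantitative statement of Proposition~\ref{PR_sol_semicomplete}. Fix a conformal isomorphism $F:\UD\to D$ and set $\psi_{s,t}:=F^{-1}\circ\varphi_{s,t}\circ F$; by Proposition~\ref{PR_conf-inv}, $(\psi_{s,t})$ is an $L^d$-evolution family in~$\UD$. The quoted results of~\cite{BCM1} yield an $L^d$-Herglotz vector field $H:\UD\times[0,+\infty)\to\Complex$, automatically semicomplete, such that for every $(z,s)\in\UD\times[0,+\infty)$ the function $t\mapsto\psi_{s,t}(z)$ solves $\dot w=H(w,t)$ with $\psi_{s,s}(z)=z$. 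By the uniqueness part of Theorem~\ref{TH_CarODETheory}, $(\psi_{s,t})$ therefore coincides with the non-autonomous semiflow furnished by Proposition~\ref{PR_sol_semicomplete} applied to $H$ on $\mathcal D_H:=\UD\times[0,+\infty)$.

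The three statements in the $\UD$-case are then extracted from Proposition~\ref{PR_sol_semicomplete}\,-(iv) by three different choices of its compact set $K\subset\mathcal D_H$. Item~(i) follows from the choice $K:=K'\times\{s\}$ with $K'\subset\subset\UD$. The crucial item~(ii) arises from the \emph{two-dimensional} choice $K:=K'\times[0,T]\subset\mathcal D_H$: Proposition~\ref{PR_sol_semicomplete}\,-(iv) then produces a single non-negative $\tilde k_K\in L^d([0,T],\Real)$, depending on $K'$ and $T$ but not on $(z,s)\in K$, such that
\[
\big|\psi_{s,t}(z)-\psi_{s,u}(z)\big|\le\int_u^t \tilde k_K(\xi)\,d\xi \quad\text{whenever } z\in K',\ 0\le s\le u\le t\le T,
\]
and taking the supremum in $z\in K'$ yields the sup-norm bound of~(ii) with $k_{K',T}:=\tilde k_K$. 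Item~(iii) then follows by combining the pointwise ODE $d\psi_{s,t}(z)/dt=H(\psi_{s,t}(z),t)$, valid off some null-set~$N$, with the $s$-uniform bound of~(ii) and Vitali's principle, exactly as in the proof of statement~(b) of Theorem~\ref{TH_sol_semicomplete}, so as to upgrade pointwise differentiability in~$z$ to differentiability in the topology of~$\Hol(\UD,\Complex)$.

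To transfer (i)--(iii) to an arbitrary simply connected $D\varsubsetneq\Complex$, I set $G(w,t):=F'\big(F^{-1}(w)\big)\,H\big(F^{-1}(w),t\big)$, which is an $L^d$-Herglotz vector field on~$D$ by Remark~\ref{RM_conf-inv-HVF}, and the chain rule recorded in Remark~\ref{R_R} carries (i) and (iii) across the conjugation by~$F$. For the quantitative bound of~(ii) on~$D$, given $K\subset\subset D$ and $T>0$ let $K_0:=F^{-1}(K)$ and let $K_1\subset\UD$ be the closure of $\bigcup_{0\le s\le t\le T}\psi_{s,t}(K_0)$, which is compact in~$\UD$ by Lemma~\ref{LM_contD}; Lemma~\ref{LM_well-known} applied to $F$ on~$K_1$ gives a constant~$C$ with $\|\varphi_{s,t}-\varphi_{s,u}\|_K\le C\,\|\psi_{s,t}-\psi_{s,u}\|_{K_0}$, so that $k_{K,T}:=C\tilde k_{K_0\times[0,T]}$ is the function required by~(ii).

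The only real subtlety lies in the observation that the correct compact set to feed into Proposition~\ref{PR_sol_semicomplete}\,-(iv) is two-dimensional, absorbing the entire parameter interval $[0,T]$; this is precisely what produces the $s$-uniform $L^d$-bound which Theorem~6.2 of~\cite{BCM1} does not provide. The remaining steps — the restriction from $L^d_{\rm loc}$ to $L^d([0,T])$, the passage from pointwise to sup-norm inequalities, and the conformal transfer from~$\UD$ to~$D$ via Lemmas~\ref{LM_contD} and~\ref{LM_well-known} — are routine within the framework already assembled in Sections~\ref{S_ODE} and~\ref{S_evolSimply}.
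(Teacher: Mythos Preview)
Your proposal is correct and follows essentially the same route the paper sketches: the paper does not give a separate proof of this theorem but states that it is Theorem~6.2 of~\cite{BCM1} strengthened by feeding Proposition~\ref{PR_sol_semicomplete} (with $\mathcal D=\UD\times[0,+\infty)$) into the established correspondence between evolution families and Herglotz vector fields, and then transferring to general~$D$ via Proposition~\ref{PR_conf-inv} and Remark~\ref{RM_conf-inv-HVF}. Your explicit identification of the two-dimensional compact set $K'\times[0,T]$ as the input to Proposition~\ref{PR_sol_semicomplete}\,-(iv), and your use of Lemmas~\ref{LM_contD} and~\ref{LM_well-known} for the conformal transfer of~(ii), are exactly the details one has to supply to unpack the paper's one-sentence justification.
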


We finish this section with the proof of a modification of \cite[Proposition 2.10]{SMP}, which does
not seem to appear in the literature earlier.

\begin{proposition}\label{PR_2trajectories}
Let $d\in[1,+\infty]$, $D\varsubsetneq\Complex$ be a simply
connected domain and $(\Phi_{s,t})_{0\le s\le
t<+\infty}\subset\Hol(D,D)$. Suppose that $(\Phi_{s,t})$ satisfies
conditions EF1 and EF2 from Definition~\ref{D_EV-simply}. Let
$\zeta_1,\zeta_2\in D$ and $\zeta_1\neq\zeta_2$. If the functions
$t\mapsto \Phi_{0,t}(\zeta_1)$ and $t\mapsto \Phi_{0,t}(\zeta_2)$
belong to $AC^d\big([0,+\infty),D\big)$ and
$\Phi_{0,t}(\zeta_1)\neq \Phi_{0,t}(\zeta_2)$ for all $t\ge0$,
then $(\Phi_{s,t})$ is an evolution family of order~$d$ in the
domain~$D$.
\end{proposition}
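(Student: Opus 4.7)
The plan is to verify the only unsatisfied axiom, EF3, via a reduction to $D=\UD$ followed by a quantitative Schwarz--Pick rigidity argument. By Proposition~\ref{PR_conf-inv}, conjugation by a Riemann map $F\colon\UD\to D$ reduces the problem to the unit disk: EF1 and EF2 transfer automatically, the two AC$^d$ trajectories $t\mapsto\Phi_{0,t}(\zeta_i)$ pull back to AC$^d$ trajectories by Lemma~\ref{LM_well-known} applied to $F^{-1}$ on the compact image of each trajectory on $[0,T]$, and distinctness survives the injectivity of $F^{-1}$. Set $\eta_i(t):=\Phi_{0,t}(\zeta_i)$ and work in~$\UD$.

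Fix $z\in\UD$ and $T>0$. The first ingredient is the cocycle identity provided by EF2:
\[
\Phi_{s,t}(z)-\Phi_{s,u}(z)=\Phi_{u,t}(w)-w,\qquad w:=\Phi_{s,u}(z),\qquad 0\le s\le u\le t\le T,
\]
together with $\Phi_{u,t}(\eta_i(u))-\eta_i(u)=\eta_i(t)-\eta_i(u)$, of modulus at most $\int_u^t|\dot\eta_i|$ by the AC$^d$ hypothesis. A Schwarz--Pick estimate applied to $\Phi_{s,u}$, using $\Phi_{s,u}(\eta_1(s))=\eta_1(u)$, confines $w=\Phi_{s,u}(z)$ to a fixed compact $K\subset\UD$ depending only on $z,\zeta_1,T$ (because $s\mapsto\eta_1(s)$ is continuous, hence bounded in~$\UD$ over~$[0,T]$). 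The same reasoning keeps $\eta_i(u)$ in a compact $K_0\subset\UD$ for $u\in[0,T]$, and the continuous positive function $u\mapsto\rho_\UD(\eta_1(u),\eta_2(u))$ admits a uniform positive lower bound $\rho_0$ on~$[0,T]$.

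The heart of the proof is then the following rigidity lemma: for every compact sets $K,K_0\subset\UD$ and every $\rho_0>0$, there is $C=C(K,K_0,\rho_0)>0$ such that
\[
\|f-\id\|_K\le C\bigl(|f(a_1)-a_1|+|f(a_2)-a_2|\bigr)
\]
for every $f\in\Hol(\UD,\UD)$ and every $a_1,a_2\in K_0$ with $\rho_\UD(a_1,a_2)\ge\rho_0$. Applied with $f=\Phi_{u,t}$ and $a_i=\eta_i(u)$, and combined with the cocycle identity, this delivers
\[
|\Phi_{s,t}(z)-\Phi_{s,u}(z)|\le\int_u^t k_{z,T}(\xi)\,d\xi,\qquad k_{z,T}:=C(|\dot\eta_1|+|\dot\eta_2|)\in L^d([0,T],\R),
\]
which is precisely EF3.

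The main obstacle is the rigidity lemma. Its qualitative counterpart --- $f$ converges to $\id$ on compacts whenever $f(a_i)\to a_i$ --- is a classical normal-families argument combined with the equality case of Schwarz--Pick (a holomorphic self-map of $\UD$ fixing two distinct interior points is the identity). Upgrading this to the Lipschitz rate, uniformly in the parameters, is delicate. The natural approach is renormalisation-by-contradiction: along a violating sequence one considers $g_n:=(f_n-\id)/\varepsilon_n$ with $\varepsilon_n:=|f_n(a_1^n)-a_1^n|+|f_n(a_2^n)-a_2^n|\to 0$, and exploits the self-map constraint $|f_n|\le 1$, which on $\UC$ reads $\Re\bigl(\bar z(f_n(z)-z)\bigr)\le-\tfrac{1}{2}|f_n(z)-z|^2$ almost everywhere and translates, after dividing by $\varepsilon_n$, into a Herglotz-type one-sided bound on $g_n$. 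This yields normality of $\{g_n\}$ on compacts of~$\UD$ and produces a subsequential limit $g\in\Hol(\UD,\Complex)$ vanishing at $a_1$ and $a_2$ yet with $|g(a_1)|+|g(a_2)|=1$ in the normalisation, a contradiction. The separation hypothesis $\rho_0>0$ is indispensable here, as it prevents the two reference points from coalescing in the limit and the normalisation from degenerating.
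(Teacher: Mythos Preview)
Your overall architecture is sound and matches the paper: reduce to the unit disk via Proposition~\ref{PR_conf-inv}, use the cocycle identity $\Phi_{s,t}(z)-\Phi_{s,u}(z)=\Phi_{u,t}(w)-w$, and prove a quantitative ``two--point rigidity'' lemma for self-maps of~$\UD$. The lemma you state is correct and is precisely the tool needed.

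The gap is in your proof of the rigidity lemma. Two concrete problems:
\begin{itemize}
\item The boundary inequality $\Re(\bar z\,g_n(z))\le 0$ a.e.\ on~$\UC$ does \emph{not} by itself give normality of $\{g_n\}$ on compacta. If $g_n(0)=0$ this would say $\Re(g_n(z)/z)\le0$ on~$\UC$, hence on~$\UD$, and Harnack finishes; but you have no control of $g_n(0)=f_n(0)/\varepsilon_n$. The multiplier $\bar z$ is not holomorphic, so the one-sided bound on~$\UC$ is not a Herglotz condition for $g_n$ itself.
\item Your stated contradiction (``$g$ vanishing at $a_1,a_2$ yet $|g(a_1)|+|g(a_2)|=1$'') is incoherent: the normalisation forces $|g(a_1)|+|g(a_2)|=1$, not vanishing. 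The actual contradiction, assuming normality, is simply that $\|g_n\|_K\to\infty$ along the violating sequence.
\end{itemize}
The missing idea is that $f-\id$ is always an infinitesimal generator when $f\in\Hol(\UD,\UD)$; then Berkson--Porta writes $g_n(z)=(\tau_n-z)(1-\bar\tau_n z)p_n(z)$ with $\Re p_n\ge0$, and since at least one of $a_1^n,a_2^n$ stays pseudohyperbolically away from~$\tau_n$, the normalisation bounds $|p_n|$ at that point, whence Harnack gives uniform bounds on compacta. Without this step your compactness argument does not close.

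The paper proceeds differently and avoids compactness altogether. It first conjugates by a time-dependent M\"obius automorphism $h_t$ sending the first trajectory to~$0$, so the new family $\psi_{s,t}:=h_t^{-1}\circ\varphi_{s,t}\circ h_s$ satisfies $\psi_{s,t}(0)=0$. Then Schwarz gives $|\psi_{u,t}(\zeta)/\zeta|\le1$, and the function $p:=(1-\psi_{u,t}/\id)/(1+\psi_{u,t}/\id)$ has $\Re p\ge0$; a direct Harnack-type bound (Lemma~\ref{LM_Santiago}) yields
$|\psi_{u,t}(\zeta)-\zeta|\le C(|\zeta_0|,r)\,|\psi_{u,t}(\zeta_0)-\zeta_0|$
with an explicit constant. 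This gives EF3 for $(\psi_{s,t})$; the passage back to $(\varphi_{s,t})$ uses \cite[Lemma~2.8]{SMP}. Compared with your route, the paper's argument is constructive (no contradiction), but it pays the price of an auxiliary transfer lemma for the time-dependent conjugation. Your rigidity lemma, once its proof is repaired, has the advantage of treating both reference points symmetrically and avoiding that external citation.
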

To prove the above proposition we need following
\begin{lemma}\label{LM_Santiago}
There exists a universal constant $C>0$ such that for any holomorphic map $\varphi:\UD\to\UD$  with
$\varphi (0)=0,$ any $r\in (0,1)$
and any $\zeta_{0}\in \mathbb{D\setminus \{}0\}$, the following inequality holds:%
\begin{equation}\label{EQ_SantPav}
|\varphi (\zeta)-\zeta|\leq \frac{C}{|\zeta_{0}|(1-|\zeta_{0}|^{2})(1-r^{2})}|\varphi
(\zeta_{0})-\zeta_{0}|,\text{ }\qquad |\zeta|\leq r.
\end{equation}
\end{lemma}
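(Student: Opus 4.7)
My plan is to factor $\varphi(\zeta)=\zeta F(\zeta)$ where $F:=\varphi/\zeta$, extended holomorphically at the origin. Since $\varphi(0)=0$, Schwarz's lemma gives $F\in\Hol(\UD,\overline{\UD})$, so the desired inequality is equivalent to a multiplicative bound of $|1-F(\zeta)|$ in terms of $|1-F(\zeta_0)|$ over $|\zeta|\le r$. If $F$ attains a unimodular value in $\UD$, then by the maximum principle $F$ is a unimodular constant, $\varphi$ is a rotation, and the claimed inequality reduces to $|\zeta|/|\zeta_0|\le r/|\zeta_0|$, which is trivial for any $C\ge 1$. Otherwise $F\in\Hol(\UD,\UD)$, and the Schwarz--Pick inequality applies:
\[
|F(\zeta)-F(\zeta_0)|\le \rho(\zeta,\zeta_0)\,\bigl|1-\overline{F(\zeta_0)}F(\zeta)\bigr|,\qquad \rho(\zeta,\zeta_0):=\left|\frac{\zeta-\zeta_0}{1-\bar\zeta_0\zeta}\right|.
\]

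The central step is to upgrade this into a multiplicative control of $|1-F|$. Using the elementary inequality $|1-\bar a b|\le |1-a|+|1-b|$, valid whenever $|a|,|b|\le 1$, together with the triangle inequality $|1-F(\zeta)|\le |1-F(\zeta_0)|+|F(\zeta)-F(\zeta_0)|$, a one-line bootstrap yields
\[
|1-F(\zeta)|\le \frac{1+\rho(\zeta,\zeta_0)}{1-\rho(\zeta,\zeta_0)}\,|1-F(\zeta_0)|.
\]

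To conclude, I will maximize $\rho(\zeta,\zeta_0)$ over $|\zeta|\le r$; the maximum is attained on $|\zeta|=r$ at the point antipodal to $\zeta_0$, giving $\rho\le (r+|\zeta_0|)/(1+r|\zeta_0|)$, and hence
\[
\frac{1+\rho}{1-\rho}\le \frac{(1+r)(1+|\zeta_0|)}{(1-r)(1-|\zeta_0|)}.
\]
Multiplying through by $|\zeta|/|\zeta_0|\le r/|\zeta_0|$ and rewriting via $|\varphi(\zeta)-\zeta|=|\zeta|\,|1-F(\zeta)|$ and $|\varphi(\zeta_0)-\zeta_0|=|\zeta_0|\,|1-F(\zeta_0)|$ produces the claimed inequality with $C$ proportional to $r(1+r)^2(1+|\zeta_0|)^2\le 16$. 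The only genuinely non-trivial point is the bootstrap that turns the Schwarz--Pick inequality into a multiplicative statement about the deviation $|1-F|$; the remaining manipulations are purely algebraic.
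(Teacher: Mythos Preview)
Your argument is correct and arguably cleaner than the paper's. Both proofs start from the Schwarz--lemma factor $F(\zeta)=\varphi(\zeta)/\zeta\in\Hol(\UD,\overline{\UD})$, but proceed differently. The paper first disposes of the case $|\varphi(\zeta_0)-\zeta_0|\ge|\zeta_0|$ trivially; in the remaining case it applies the Cayley transform $p:=(1-F)/(1+F)$, observes that $\Re p\ge 0$, and invokes a Harnack-type estimate for positive-real-part functions (from \cite[p.\,40]{Pommerenke}) to bound $|p(\zeta)|$ by a conformal distortion factor times $|p(\zeta_0)|$, and then translates back. Your route stays with $F$ itself: Schwarz--Pick plus the elementary inequality $|1-\bar a b|\le|1-a|+|1-b|$ yield the multiplicative bound $|1-F(\zeta)|\le\frac{1+\rho}{1-\rho}\,|1-F(\zeta_0)|$ in one line, with no case distinction and no passage through the right half-plane. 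Besides being shorter, your computation gives the constant $C=16$, improving on the paper's $C=32\sqrt 2$. The only cosmetic point is that your rotation case should note that for $\varphi=\id$ both sides vanish, while for a nontrivial rotation the ratio $|\zeta|/|\zeta_0|\le r/|\zeta_0|$ is indeed dominated by $C/\big(|\zeta_0|(1-|\zeta_0|^2)(1-r^2)\big)$ for any $C\ge1$.
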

\begin{proof}
We fix arbitrary $\zeta_{0}\in \mathbb{D\setminus \{}0\}$, $r\in (0,1)$, and $\varphi \in
\mathrm{Hol}(\mathbb{D},\mathbb{D})$  with $\varphi (0)=0$. If $|\varphi (\zeta_{0})-\zeta_{0}|\geq
|\zeta_{0}|$, then~\eqref{EQ_SantPav} holds trivially with any $C\ge2$.

So we can assume that $|\varphi (\zeta_{0})-\zeta_{0}|<|\zeta_{0}|.$ Then $|\varphi
(\zeta_{0})+\zeta_{0}|> |\zeta_{0}|>0$. Hence, it follows from the Schwarz lemma that
$\varphi'(0)\neq-1$ and that $\varphi(\zeta)/\zeta\in\overline{\mathbb D}\setminus\{-1\}$ for all
$\zeta\in\mathbb D\setminus\{0\}$.

Therefore, the function
$p(\zeta):=\big(1-\varphi(\zeta)/\zeta\big)/\big(1+\varphi(\zeta)/\zeta\big)$, extended to the
origin by continuity, is holomorphic in $\mathbb D$ and satisfies there the inequality $\func{Re}
p(\zeta)\ge 0$. Hence,~\cite[ineq.\,(11) on p.\,40]{Pommerenke} implies that for
any $|\zeta|\leq r,$%
\begin{multline*}
|p(\zeta)| \leq |\func{Im} p(\zeta_0)|+|\func{Re} p(\zeta_{0})|\,\frac{1+\left\vert \dfrac{\zeta_{0}-\zeta}{1-\overline{\zeta_{0}}%
\zeta}\right\vert }{1-\left\vert \dfrac{\zeta_{0}-\zeta}{1-\overline{\zeta_{0}}\zeta}\right\vert
}\le
\sqrt2\,|p(\zeta_{0})|\,\frac{1+\left\vert \dfrac{\zeta_{0}-\zeta}{1-\overline{\zeta_{0}}%
\zeta}\right\vert }{1-\left\vert \dfrac{\zeta_{0}-\zeta}{1-\overline{\zeta_{0}}\zeta}\right\vert}=
\\
\vphantom{\int\limits_0^0}=\sqrt2\,|p(\zeta_{0})|\,\frac{(|1-\overline{\zeta_{0}}\zeta|+|\zeta_{0}-\zeta|)^2}{%
(1-|\zeta_{0}|^{2})(1-|\zeta|^{2})} \leq |p(\zeta_0)|\frac{16\sqrt2}{(1-|\zeta_{0}|^{2})(1-r^{2})}.
\end{multline*}%
Finally, bearing in mind that $|\varphi (\zeta_{0})+\zeta_{0}|> |\zeta_{0}|$ we deduce that,
whenever $0<|\zeta|\leq r$,
\begin{eqnarray*}
|\varphi (\zeta)-\zeta| &=&|\zeta+\varphi (\zeta)|\,|p(\zeta)|\leq \frac{32\sqrt2}{%
(1-|\zeta_{0}|^{2})(1-r^{2})}\frac{|\zeta_{0}-\varphi (\zeta_{0})|}{|\zeta_{0}+\varphi
(\zeta_{0})|} \\
&\leq &\frac{32\sqrt2}{|\zeta_{0}|(1-|\zeta_{0}|^{2})(1-r^{2})}|\varphi (\zeta_{0})-\zeta_{0}|.
\end{eqnarray*}
This finishes the proof.
\end{proof}

\begin{proof}[\bf Proof of Proposition~\ref{PR_2trajectories}] First we prove
the proposition for the case $D:=\UD$. For the sake of convenience we change
the notation used in Proposition~\ref{PR_2trajectories}. So let
$(\varphi_{s,t})_{0\le s\le t<+\infty}\subset\Hol(\UD,\UD)$, $z_1,z_2\in\UD$,
$d\in[1,+\infty]$. Assume that (1)~$\varphi_{s,s}=\id_{\mathbb D}$ for all
$s\ge0$; (2)~ $\varphi_{s,t}=\varphi_{u,t}\circ \varphi_{s,u}$ whenever $0\le
s\le u\le t<+\infty$; (3)~$t\mapsto \varphi_{0,t}(z_1)$ and $t\mapsto
\varphi_{0,t}(z_2)$ belong to $AC^d\big([0,+\infty),\mathbb D\big)$;
(4)~$\varphi_{0,t}(z_1)\neq \varphi_{0,t}(z_2)$ for all $t\ge0$. We have to
prove that $(\varphi_{s,t})$ is an $L^d$-evolution family in~$\UD$.

Write $$h_t(z):=\frac{z+a(t)}{1+\overline{a(t)}z},\quad a(t):=\varphi_{0,t}(z_1).$$ Define
$\psi_{s,t}:=h_t^{-1}\circ\varphi_{s,t}\circ h_s$. We claim that $(\psi_{s,t})$ is an
$L^d$-evolution family. Clearly, $(\psi_{s,t})$ satisfies conditions EF1 and EF2 from
Definition~\ref{D_EV-simply}. Moreover, $\psi_{s,t}(0)=0$ for any $s\ge0$ and any $t\ge s$. To
check condition EF3, we note that $t\mapsto \zeta_0(u):=\psi_{0,t}(z_0)\in \UD$, where
$z_0:=h_0^{-1}(z_2)$, belongs to~$AC^d\big([0,+\infty),\UD\big)$ and does not vanish.

Now take arbitrary $T>0$. Fix any $z\in\UD$ and any $s,u,t\in[0,T]$ such that $s\le u\le t$. Denote
$r:=|z|$ and $\varepsilon:=\min_{u\in[0,T]}|\zeta_0(u)|>0$. Applying Lemma~\ref{LM_Santiago} with
$\varphi:=\psi_{u,t}$, $\zeta:=\psi_{s,u}(z)$, $\zeta_0:=\zeta_0(u)$ and taking into account that
$|\zeta|\le|z|=r$ and $|\zeta_0(u)|\le |z_0|$ by the Schawrz lemma, we get
\begin{multline*}
|\psi_{s,t}(z)-\psi_{s,u}(z)|=|\psi_{u,t}(\zeta)-\zeta|\le
\frac{C}{|\zeta_{0}(u)|(1-|\zeta_{0}(u)|^{2})(1-r^{2})}|\psi_{u,t}(\zeta_{0})-\zeta_{0}|\le\\\le
\frac{C}{\varepsilon(1-|z_{0}|^{2})(1-r^{2})}|\psi_{0,t}(z_{0})-\psi_{0,u}(z_{0})|.
\end{multline*}
This shows that $(\psi_{s,t})$ satisfies EF3 and hence it is an $L^d$-evolution
family. Now we apply~\cite[Lemma~2.8]{SMP} to conclude that~$(\varphi_{s,t})$
is also an evolution family of order~$d$.

The proof for the case $D:=\UD$ is complete. For arbitrary simply connected
domains ${D\varsubsetneq\Complex}$, the proposition follows now from the
Riemann Mapping Theorem and Proposition~\ref{PR_conf-inv}.
\end{proof}

\section{Evolution families over systems of doubly connected
domains}\label{S_evolDoubly}
\subsection{Definition of an evolution family in doubly connected case}
As we mentioned in the introduction, the most important new property of Loewner
Evolution in multiply connected case is that the canonical  domain has to
evolve in time, while in simply connected case the conformal type does not
change. On the level of evolution families one can explain this phenomenon by
the fact that all the families satisfying Definition~\ref{D_EV-simply} for
$D:=\mathbb A_r$ with some fixed $r\in(0,1)$ are exhausted by rotations (see
Example~\ref{EX_Rotations}). So instead of one fixed reference domain we
consider families of reference domains. A natural choice of doubly connected
reference domains are the annuli $\mathbb A_r$, where $r\in[0,1)$. (Note we do
not exclude the case $r=0$.) With each annulus~$\mathbb A_r$ we can  associate
a one-generated torsion-free Fuchsian group~$\Gamma$ such that $\mathbb A_r$ is
conformally equivalent to~$\UD/\Gamma$. This group is unique up to conjugation
by a M\"obius transformation and the conjugation classes are uniquely defined
by the multiplier~$\lambda$ of the generator of $\Gamma$. It is not difficult
to calculate that $\lambda=e^{-2\pi\omega(r)}$, where
\begin{equation}\label{EQ_omega}
\omega(r):=\left\{%
\begin{array}{ll}
   -\pi/\log r,& \text{if~}r\in(0,1),\\%
   0,& \text{if~}r=0.
\end{array}\right.
\end{equation}
Hence it is natural to consider families of annuli $(\mathbb A_{r(t)})_{t\ge0}$
assuming some regularity of the function $t\mapsto\omega(r(t))$. Namely, we
introduce the following
\begin{definition}\label{def-cansys}
Let $d\in[1,+\infty]$ and $(D_t)_{t\ge0}$ be a family of annuli $D_t:=\mathbb A_{r(t)}$. We will
say that $(D_t)$ is a {\it (doubly connected) canonical domain system of order~$d$} (or in short, a
{\it canonical $L^d$-system}) if the function $t\mapsto \omega(r(t))$ belongs to
$AC^d\big([0,+\infty),[0,+\infty)\big)$ and does not increase. If $r(t)\equiv0$, then the canonical
domain system~$(D_t)$ will be called {\it degenerate}. If on the contrary $r(t)$ does not vanish,
then ~$(D_t)$ will be called {\it non-degenerate}. Finally, if there exists $T>0$ such that
$r(t)>0$ for all $t\in[0,T)$ and $r(t)=0$ for all $t\ge T$, then we will say that~$(D_t)$ is {\it
of mixed type}.
\end{definition}

\begin{remark}
The condition that $t\mapsto \omega(r(t))$ is of class $AC^d$ implies that $t\mapsto r(t)$ also
belongs to $AC^d\big([0,+\infty),[0,1)\big)$. In the non-degenerate case, i.e. when $r(t)>0$ for
all~$t\ge0$, or if $d=1$, then the converse is also true and we can replace $\omega(r(t))$ by
$r(t)$ in the above definition. However, in general this is not the case and for some auxiliary
statements (e.\,g. for Lemma~\ref{LM_lifting_evolut_fam}) the condition $\omega\circ r\in
AC^d\big([0,+\infty),[0,+\infty)\big)$ is essential even if we assume that $t\mapsto r(t)$ is of
class~$AC^d$. At the same time we do not know whether any of our main results would fail to hold in
the mixed case with $d>1$ if in Definition~\ref{def-cansys} one places a weaker condition $r\in
AC^d\big([0,+\infty),[0,1)\big)$ instead of $\omega\circ r\in
AC^d\big([0,+\infty),[0,+\infty)\big)$.
\end{remark}

Now we can introduce the definition of an evolution family for the doubly
connected setting.

\begin{definition}\label{def-ev}
Let $(D_t)_{t\ge0}$ be a canonical domain system of order~$d\in[1,+\infty]$. A family
$(\varphi_{s,t})_{0\leq s\leq t<+\infty}$ of holomorphic mappings $\varphi_{s,t}:D_s\to D_t$ is
said to be an {\it evolution family of order $d$ over~$(D_t)$} (in short, an {\it $L^d$-evolution
family}) if the following conditions are satisfied:
\begin{mylist}
\item[EF1.] $\varphi_{s,s}=\mathsf{id}_{D_s},$

\item[EF2.] $\varphi_{s,t}=\varphi_{u,t}\circ\varphi_{s,u}$ whenever $0\le
s\le u\le t<+\infty,$

\item[EF3.] for any closed interval $I:=[S,T]\subset[0,+\infty)$ and any $z\in D_S$ there exists a
non-negative function ${k_{z,I}\in
L^{d}\big([S,T],\mathbb{R}\big)}$ such that
\[
|\varphi_{s,u}(z)-\varphi_{s,t}(z)|\leq\int_{u}^{t}k_{z,I}(\xi)d\xi
\]
whenever $S\leq s\leq u\leq t\leq T.$
\end{mylist}
Suppressing the language we will refer also to the pair $\mathcal
E:=\big((D_t), (\varphi_{s,t})\big)$ as an evolution family of order~$d$ and
apply terms {\it degenerate}, {\it non-degenerate}, {\it of mixed type} to
$\mathcal E$ whenever they are applicable to the canonical domain
system~$(D_t)$.
\end{definition}

It is clear that the notion of an evolution family over a {\it degenerate} canonical domain system
given by Definition~\ref{def-ev} is the same as the notion of evolution family in the domain
${D:=\UD^*}$ given by Definition~\ref{D_EV-simply}. This case is known to be equivalent to that of
evolution families in the unit disk fixing the origin. We will discuss it briefly in
Section~\ref{S_degenerate}, while the main attention in this paper will be paid to the
non-degenerate case.

\subsection{Lifting evolution families to a simply connected domain}
Given a canonical domain system~$(D_t)$ and a family $(\varphi_{s,t})$
satisfying algebraic conditions EF1 and EF2 from Definition~\ref{def-ev}, there
is a lifting of $(\varphi_{s,t})$ to the upper half-plane $\UH:=\{z:\Im z>0\}$
(or any other hyperbolic simply connected domain) satisfying conditions EF1 and
EF2 from Definition~\ref{D_EV-simply}. Under some additional conditions such
lifting is unique. An important role in our arguments is played by the class
$\classM(r_1,r_2)$ of all functions $\psi\in\Hol(\mathbb A_{r_1},\mathbb
A_{r_2})$, $1>r_1\ge r_2\ge 0$, such that $I(\psi\circ\gamma)=I(\gamma)$ for
any oriented closed curve $\gamma\subset\mathbb A_{r_1}$, where $I(\gamma)$
stands for the index of the point $z=0$ w.r.t. $\gamma$.

The lifting technique allows us to apply the theory of evolution families in simply connected
domains to establish some useful results in the doubly connected case. One of these results is the
following analogue of~Proposition~\ref{PR_2trajectories}, which gives a sufficient (and in fact
necessary) condition for $(\varphi_{s,t})$ to be an $L^d$-evolution family over~$(D_t)$.

\begin{theorem}\label{TH_1punto} Let $\big(D_t\big)=\big(\mathbb
A_{r(t)}\big)$ be a canonical  domain system of order~$d\in[1,+\infty]$ and let
$(\varphi_{s,t})_{0\le s\le t}$ be a family of holomorphic functions
$\varphi_{s,t}:D_s\to D_t$ satisfying conditions EF1 and EF2 in
Definition~\ref{def-ev}. Suppose that at least one the following conditions
holds:
\begin{mylist}
\item[(a)] for each $t>0$, each $s_0\in[0,t)$ and any $z\in D_{s_0}$ the mapping $[s_0,t]\ni s\mapsto \varphi_{s,t}(z)\in \UD^*$ is
continuous;
\item[(b)] for each $s\ge0$ and any $z\in D_s$ the mapping $[s,+\infty)\ni t\mapsto \varphi_{s,t}(z)\in \UD^*$ is
continuous;
\item[(c)] for each $s\ge0$ and $t\ge s$ the function $\varphi_{s,t}$ belongs to the class
$\classM\big(r(s),r(t)\big)$.
\end{mylist}
If there exists a point $z_0\in D_0$ such that the function $[0,+\infty)\ni t\mapsto
\varphi_{0,t}(z_0)\in\UD^*$ belongs to $AC^d\big([0,+\infty),\UD^*\big)$, then $(\varphi_{s,t})$ is
an $L^d$-evolution family over $(D_t)$.
\end{theorem}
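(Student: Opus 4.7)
The plan is to reduce the statement to Proposition~\ref{PR_2trajectories} by lifting everything to a simply connected universal cover of the annuli $D_t$. I would choose a holomorphic universal covering $p_t:\UH\to D_t$ for every $t\ge0$, and for each $0\le s\le t$ construct a holomorphic lift $\tilde\varphi_{s,t}:\UH\to\UH$ characterized by $p_t\circ\tilde\varphi_{s,t}=\varphi_{s,t}\circ p_s$. The goal is to arrange this so that $(\tilde\varphi_{s,t})$ satisfies conditions EF1 and EF2 of Definition~\ref{D_EV-simply} on $\UH$, which places us in the simply connected framework of Section~\ref{S_evolSimply}.

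Under condition (c), each $\varphi_{s,t}\in\classM(r(s),r(t))$ induces a prescribed index-preserving homomorphism of deck groups, so $\tilde\varphi_{s,t}$ is determined up to a global deck transformation of $\UH$; normalizing $\tilde\varphi_{s,s}=\id_\UH$, the uniqueness of such lifts combined with the annular identity $\varphi_{s,t}=\varphi_{u,t}\circ\varphi_{s,u}$ forces EF2 in the lift. Under (a) or (b), the pointwise continuity in the respective parameter permits the same lifts to be singled out by continuous selection starting from the identity lift at the diagonal $s=t$; since each deck group is discrete, any such continuous selection is unique once initialized, and EF2 again follows from the uniqueness of lifts prescribed at a single point. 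In each case I would arrive at a coherent family $(\tilde\varphi_{s,t})$ satisfying EF1 and EF2 on $\UH$, with the induced homomorphism of deck groups $\Gamma_0\to\Gamma_t$ being injective.

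Next, fix $\tilde z_0\in p_0^{-1}(z_0)$ and let $\tau\in\Aut(\UH)$ generate the deck group of $p_0$. I would take as the two candidate trajectories $\tilde\xi_0(t):=\tilde\varphi_{0,t}(\tilde z_0)$ and $\tilde\xi_1(t):=\tilde\varphi_{0,t}(\tau(\tilde z_0))$. By the injectivity of the induced deck-group homomorphism, $\tilde\xi_1(t)=\tau'_t(\tilde\xi_0(t))$ for a non-trivial deck transformation $\tau'_t$ of $p_t$; since non-trivial elements of the Fuchsian group of an annulus are hyperbolic and hence act without fixed points on $\UH$, we obtain $\tilde\xi_0(t)\neq\tilde\xi_1(t)$ for every $t\ge0$. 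Both curves project through $p_t$ onto the given $AC^d$ trajectory $t\mapsto\varphi_{0,t}(z_0)\in\UD^*$; using the local biholomorphism of $p_t$ on small disks and Lemma~\ref{LM_well-known} applied upstairs, one concludes $\tilde\xi_0,\tilde\xi_1\in AC^d([0,+\infty),\UH)$. Proposition~\ref{PR_2trajectories} then yields that $(\tilde\varphi_{s,t})$ is an $L^d$-evolution family in~$\UH$.

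Finally, the EF3 estimate for the lift pushes forward through $p_s$ and $p_t$, again via Lemma~\ref{LM_well-known}, to an EF3 estimate for $(\varphi_{s,t})$ over $(D_t)$. The hard part is the first step, namely verifying that the continuity hypotheses (a) and (b) are strong enough to produce the same prescribed (index-preserving) homomorphism of deck groups that condition (c) postulates outright, so that the three cases can be handled uniformly. Once this is organized, the two-trajectory input to Proposition~\ref{PR_2trajectories} is produced cheaply by separating the two lifts of $\varphi_{0,t}(z_0)$ through a non-trivial deck transformation, and the rest consists of routine translations between the doubly and simply connected settings.
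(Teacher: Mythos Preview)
Your overall strategy is exactly the one the paper follows: reduce (a) and (b) to (c) by a homotopy argument, lift $(\varphi_{s,t})$ through coherent coverings $p_t:\UH\to D_t$ to a family $(\tilde\varphi_{s,t})$ in~$\UH$ satisfying EF1--EF2, manufacture two distinct $AC^d$ trajectories by translating a lift of $\varphi_{0,t}(z_0)$ by a nontrivial deck transformation, invoke Proposition~\ref{PR_2trajectories}, and then push the resulting EF3 estimate back down.

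There is, however, one genuine gap. You treat $p_t$ as an abstract universal cover and argue that ``the local biholomorphism of $p_t$ on small disks'' lets you lift the $AC^d$ curve $t\mapsto\varphi_{0,t}(z_0)$ to $AC^d$ curves $\tilde\xi_0,\tilde\xi_1$, and later that EF3 for $(\tilde\varphi_{s,t})$ ``pushes forward through $p_s$ and $p_t$''. But $p_t$ \emph{depends on $t$}: a local inverse of $p_t$ is a function of both the base point and of~$t$, and composing an $AC^d$ curve in the base with a $t$-dependent local inverse need not be $AC^d$ unless you control how $p_t$ varies in~$t$. This is precisely where the hypothesis of Definition~\ref{def-cansys} that $t\mapsto\omega(r(t))$ be of class~$AC^d$ enters, and your sketch never uses it. The paper resolves this by choosing a \emph{concrete} covering $B_t=\exp\circ\, Q(\cdot,\omega_t)$ with $Q$ jointly holomorphic in its two arguments; then the lifted trajectory factors as $w(t)=R(\zeta(t),\omega_t)$ with $\zeta(t)$ an honest lift through the \emph{fixed} map $\exp$, and $AC^d$-regularity follows from the chain rule applied to $R$ together with $\zeta\in AC^d$ and $\omega_\cdot\in AC^d$. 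The same issue recurs in the final step: to get EF3 for $(\varphi_{s,t})$ you must estimate $\|Q(\cdot,\omega_t)\circ\tilde\varphi_{s,t}-Q(\cdot,\omega_u)\circ\tilde\varphi_{s,u}\|$, which splits into a term controlled by EF3 upstairs and a term $\|Q(\cdot,\omega_t)-Q(\cdot,\omega_u)\|$ controlled by $|\omega_t-\omega_u|$; see Claim~1 in the paper's proof. Without an explicit $AC^d$ parametrization of the family $(p_t)$, neither step goes through.
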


Using the lifting technique, we can also extend assertion~(ii) of
Theorem~\ref{TH_SC-EF-Diff} to the doubly connected setting.

\begin{proposition}\label{PR_uniformity}
Let $(\varphi_{s,t})$ be an $L^d$-evolution family over a canonical
$L^d$-system $\big(D_t\big)=\big(\mathbb A_{r(t)}\big)$. Then for any closed
interval $I:=[S,T]\subset[0,+\infty)$ and any compact set $K\subset D_{S}$
there exists a non-negative function $k_{K,I}\in L^d\big(I,\Real\big)$ such
that
$$
\|\varphi_{s,t}-\varphi_{s,u}\|_K\le\int_{u}^{t}k_{K,I}(\xi)d\xi
$$
for all $s,u,t\in I$ satisfying $s\le u\le t$.
\end{proposition}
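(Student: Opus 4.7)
The plan is to reduce the proposition to its simply connected analogue Theorem~\ref{TH_SC-EF-Diff}\,(ii) via the lifting technique developed earlier in this section, then push the resulting uniform-on-compacta estimate down through the covering maps using the Lipschitz property of holomorphic functions on compacta (Lemma~\ref{LM_well-known}).

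First I would lift $(\varphi_{s,t})$ to an $L^d$-evolution family $(\tilde\varphi_{s,t})$ of self-maps of the upper half-plane $\UH$ in the sense of Definition~\ref{D_EV-simply}, using Lemma~\ref{LM_lifting_evolut_fam}. The lift comes with holomorphic coverings $\pi_t:\UH\to D_t$, $t\ge 0$, intertwining the two families via $\pi_t\circ\tilde\varphi_{s,t}=\varphi_{s,t}\circ\pi_s$; after normalizing so that the maps $\pi_s$, $s\in I$, agree on a common fundamental lift of $D_S$, every $z\in K$ admits a single lift $\tilde z\in\UH$ with $\pi_s(\tilde z)=z$ for all $s\in I$.

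Next I would apply Theorem~\ref{TH_SC-EF-Diff}\,(ii) to $(\tilde\varphi_{s,t})$: choosing a compact set $\tilde K\subset\subset\UH$ with $\pi_S(\tilde K)\supset K$, it furnishes a non-negative $\tilde k\in L^d(I,\R)$ satisfying $\|\tilde\varphi_{s,t}-\tilde\varphi_{s,u}\|_{\tilde K}\le\int_u^t\tilde k(\xi)\,d\xi$ whenever $S\le s\le u\le t\le T$. By continuity of $(s,t,\tilde z)\mapsto\tilde\varphi_{s,t}(\tilde z)$ (Lemma~\ref{LM_contD}), the set $L:=\overline{\{\tilde\varphi_{s,t}(\tilde z):\tilde z\in\tilde K,\,S\le s\le t\le T\}}$ is a compact subset of~$\UH$.

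Finally, for $z=\pi_S(\tilde z)\in K$ the intertwining gives
\[
\varphi_{s,t}(z)-\varphi_{s,u}(z)=\bigl[\pi_t(\tilde\varphi_{s,t}(\tilde z))-\pi_u(\tilde\varphi_{s,t}(\tilde z))\bigr]+\bigl[\pi_u(\tilde\varphi_{s,t}(\tilde z))-\pi_u(\tilde\varphi_{s,u}(\tilde z))\bigr].
\]
The second bracket is bounded by $C_L\,|\tilde\varphi_{s,t}(\tilde z)-\tilde\varphi_{s,u}(\tilde z)|$ via Lemma~\ref{LM_well-known}, where the Lipschitz constant $C_L$ is uniform in $u\in I$ since the family $\{\pi_u\}_{u\in I}$ is equicontinuous on~$L$. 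The first bracket should be bounded by $\int_u^t\kappa(\xi)\,d\xi$ with $\kappa\in L^d(I,\R)$ by using the explicit expressions of the $\pi_t$ in terms of $\omega(r(t))$ combined with the $AC^d$-regularity of $t\mapsto\omega(r(t))$ built into Definition~\ref{def-cansys}. Setting $k_{K,I}:=C_L\tilde k+\kappa$ yields the desired estimate. The principal obstacle I anticipate is precisely this last \emph{temporal} estimate on $|\pi_t(w)-\pi_u(w)|$ uniformly on~$L$, which requires quantifying the dependence of the coverings on~$t$ in an $L^d$ sense; this is the very reason Definition~\ref{def-cansys} imposes $\omega\circ r\in AC^d$ rather than mere absolute continuity.
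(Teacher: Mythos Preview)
Your overall strategy---lift to $\UH$, apply the simply connected estimate Theorem~\ref{TH_SC-EF-Diff}\,(ii), push down via Lipschitz bounds---matches the paper's. But there is a genuine gap in your execution. You assert that ``after normalizing so that the maps $\pi_s$, $s\in I$, agree on a common fundamental lift of $D_S$, every $z\in K$ admits a single lift $\tilde z\in\UH$ with $\pi_s(\tilde z)=z$ for all $s\in I$.'' This cannot be arranged: the coverings $B_s=\exp\circ Q(\cdot,\omega_s):\UH\to D_s$ supplied by Lemma~\ref{LM_lifting_evolut_fam} genuinely depend on $s$ through $\omega_s=\omega(r(s))$, and no renormalization makes them coincide on an open set unless $r$ is constant on~$I$. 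Consequently your displayed decomposition is invalid: the intertwining gives $\pi_t(\tilde\varphi_{s,t}(\tilde z))=\varphi_{s,t}(\pi_s(\tilde z))$, not $\varphi_{s,t}(\pi_S(\tilde z))$, so the two sides you equate are actually different points.

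The paper resolves this by factoring $B_\tau=\exp\circ Q(\cdot,\omega_\tau)$ and working at the intermediate level of the strips $\mathbb S_{r(\tau)}$ with the maps $\phi_{s,t}:=Q(\cdot,\omega_t)\circ\Phi_{s,t}\circ R(\cdot,\omega_s)$, which satisfy $\exp\circ\phi_{s,t}=\varphi_{s,t}\circ\exp$. Now the covering $\exp$ is the \emph{same} holomorphic map for every $\tau$ (only its domain of restriction changes), so a single logarithmic lift of $z\in K$ does serve for all $s\in I$. The uniform estimate for $\phi_{s,t}$ (Claim~1 in the proof of Theorem~\ref{TH_1punto}) is obtained exactly along the lines you sketch---Theorem~\ref{TH_SC-EF-Diff}\,(ii) for $\Phi_{s,t}$, Lipschitz bounds on $Q$ in both arguments, and the $AC^d$ regularity of $t\mapsto\omega_t$---and then descends to $\varphi_{s,t}$ simply because $\exp$ is a contraction on every $\mathbb S_r$. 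In short, your ``principal obstacle'' (the temporal variation of the coverings) should be handled inside $Q(\cdot,\omega_t)$, where it is routine, rather than inside a putatively $s$-independent $\pi_s$, which does not exist.
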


Our study of the vector fields corresponding to non-degenerate $L^d$-evolution families is also
based on the lifting technique. A suitable geometry of the covering space for non-degenerate case
is the one of the strip~$\mathbb S:=\{z:0<\Re z<1\}$. This is a motivation for following
\begin{theorem}\label{TH_lifting_evolut_fam}
Let $\big(D_t\big)=\big(\mathbb A_{r(t)}\big)$ be a non-degenerate canonical
domain system of order~$d\in[1,+\infty]$. Then for any $L^d$-evolution family
$(\varphi_{s,t})$ over $(D_t)$ there exists a unique $L^d$-evolution family
$(\Psi_{s,t})$ in the strip~$\mathbb S$ such that
\begin{equation}\label{EQ_G-phi}
W_t\circ\Psi_{s,t}=\varphi_{s,t}\circ W_s,\quad 0\le s\le t<+\infty,
\end{equation}
where $W_\tau(\zeta):=\exp(\zeta\log r(\tau))$ for all $\tau\ge0$ and all $\zeta\in\mathbb S$.
\end{theorem}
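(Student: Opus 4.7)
The plan is to build $(\Psi_{s,t})$ by lifting through the family of universal coverings~$W_\tau\colon \mathbb S\to D_\tau$. For each $\tau\ge 0$, non-degeneracy gives $r(\tau)>0$, so $W_\tau$ is holomorphic surjective with $|W_\tau(\zeta)|=r(\tau)^{\Re\zeta}\in(r(\tau),1)$ and deck group generated by $\zeta\mapsto \zeta+2\pi i/\log r(\tau)$. By standard covering theory, every holomorphic $f\colon\mathbb S\to D_t$ admits a holomorphic lift $\tilde f\colon\mathbb S\to\mathbb S$ with $W_t\circ\tilde f=f$, unique up to a deck translation; and such a lift is completely determined by its value at a single point.

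Fix a base point $\zeta_0\in\mathbb S$. For each $s\ge 0$, condition EF3 for $(\varphi_{s,t})$ makes $t\mapsto\varphi_{s,t}(W_s(\zeta_0))\in D_t\subset\mathbb C^*$ continuous on $[s,+\infty)$, so by path-lifting through $\exp$ there is a unique continuous $\gamma_s\colon[s,+\infty)\to\mathbb C$ with $\exp\gamma_s(t)=\varphi_{s,t}(W_s(\zeta_0))$ and $\gamma_s(s)=\zeta_0\log r(s)$. Set $\eta_s(t):=\gamma_s(t)/\log r(t)$, and let $\Psi_{s,t}\colon\mathbb S\to\mathbb S$ be the unique holomorphic lift of $\varphi_{s,t}\circ W_s$ through $W_t$ with $\Psi_{s,t}(\zeta_0)=\eta_s(t)$. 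Equivalently, $\Psi_{s,t}(\zeta)=L_{s,t}(\zeta)/\log r(t)$, where $L_{s,t}$ is the branch of $\log(\varphi_{s,t}\circ W_s)$ on $\mathbb S$ pinned down by $L_{s,t}(\zeta_0)=\gamma_s(t)$. Property EF1 is now clear, since $\eta_s(s)=\zeta_0$ forces the lift $\Psi_{s,s}$ of $W_s$ through $W_s$ to be $\mathrm{id}_{\mathbb S}$. For EF2, note that $W_t\circ(\Psi_{u,t}\circ\Psi_{s,u})=\varphi_{u,t}\circ\varphi_{s,u}\circ W_s=W_t\circ\Psi_{s,t}$, so the two sides differ by a translation $2\pi i k(t)/\log r(t)$ with $k(t)\in\mathbb Z$. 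Both sides are continuous in~$t$ (using the representation $L_{s,t}(\zeta)=L_{s,t}(\zeta_0)+\int_{\zeta_0}^{\zeta} f'_{s,t}/f_{s,t}\,d\omega$ with $f_{s,t}:=\varphi_{s,t}\circ W_s$, together with continuity of $t\mapsto \varphi_{s,t}$ in $\mathrm{Hol}(D_s,\mathbb C^*)$), so $k(t)$ is constant, and equals~$0$ since equality holds at $t=u$.

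The main work lies in EF3. Fix $z\in\mathbb S$ and a compact interval $I=[S,T]$ with $s\in I$. Set $w(t):=\varphi_{s,t}(W_s(z))$ and $L(t):=L_{s,t}(z)$, so that $\Psi_{s,t}(z)=L(t)/\log r(t)$. By EF3 applied to $(\varphi_{s,t})$, $w\in AC^d(I,\mathbb C)$; moreover $|w(t)|\ge r(t)\ge\min_{I}r>0$ by non-degeneracy and continuity, so $L\in AC^d(I,\mathbb C)$ with $|L(t)-L(u)|\le(\min_Ir)^{-1}\int_u^t|w'(\xi)|\,d\xi$. On the other side, the hypothesis $\omega\circ r\in AC^d([0,+\infty),[0,+\infty))$ and non-degeneracy imply $\omega(r(\cdot))\ge\min_I\omega(r)>0$ on $I$, hence $\log r(\cdot)=-\pi/(\omega\circ r)\in AC^d(I,\mathbb R)$ and is bounded away from~$0$ there. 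Combining, $\Psi_{s,t}(z)$ is $AC^d$ in~$t$ on $I$, with
\[
|\Psi_{s,t}(z)-\Psi_{s,u}(z)|\le\int_u^t k_{z,I}(\xi)\,d\xi
\]
for a suitable $k_{z,I}\in L^d(I,\mathbb R)$ built from $|w'|$ and $|(1/\log r)'|$. For uniqueness, any other $L^d$-evolution family $(\Psi'_{s,t})$ in $\mathbb S$ satisfying \eqref{EQ_G-phi} automatically has $\Psi'_{s,s}=\mathrm{id}_{\mathbb S}$, and $t\mapsto\Psi'_{s,t}$ is continuous by Lemma~\ref{LM_contD} (applied to $\mathbb S$, which is hyperbolic). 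Thus $t\mapsto\Psi'_{s,t}(\zeta_0)$ is a continuous lift of $t\mapsto \varphi_{s,t}(W_s(\zeta_0))$ through $W_t$ starting at $\zeta_0$, so it coincides with $\eta_s(t)$; by the covering uniqueness, $\Psi'_{s,t}=\Psi_{s,t}$.

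The main obstacle I expect is the verification of EF3: it is exactly the step that forces the hypothesis $\omega\circ r\in AC^d$ (not just $r\in AC^d$) and the non-degeneracy assumption, and it requires balancing the $AC^d$-regularity of $\varphi_{s,t}(W_s(z))$ inherited from EF3 upstairs against the $AC^d$-regularity of $\log r(t)$ in the denominator, keeping careful track of lower bounds on $|w(t)|$ and $|\log r(t)|$ on compact intervals. The rest of the argument is essentially covering-theoretic bookkeeping.
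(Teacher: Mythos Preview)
Your covering-theoretic construction of $(\Psi_{s,t})$ and your arguments for EF1, EF2, and uniqueness are fine and in spirit close to the paper's (the paper routes through~$\UH$ first via Lemma~\ref{LM_lifting_evolut_fam} and then conjugates into~$\mathbb S$, but that detour is not essential).

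The gap is in your verification of EF3. Recall that EF3 in Definition~\ref{D_EV-simply} demands, for each $T>0$ and each $z\in\mathbb S$, a single function $k_{z,T}\in L^d([0,T])$ that works \emph{for all} $s\in[0,T]$ simultaneously. In your argument you ``fix $z\in\mathbb S$ and a compact interval $I=[S,T]$ with $s\in I$'' and then build $k_{z,I}$ out of $|w'|$ with $w(t)=\varphi_{s,t}(W_s(z))$. But $w$ depends on $s$; hence so does your $k_{z,I}$. Moreover, you cannot repair this by appealing to EF3 for $(\varphi_{s,t})$ at a fixed spatial point, because the spatial point $W_s(z)$ you track varies with~$s$, and for $\Re z$ close to $1$ it need not even lie in $D_S$, so Proposition~\ref{PR_uniformity} does not apply directly either.

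This is exactly why the paper does not verify EF3 directly. Instead, it invokes Proposition~\ref{PR_2trajectories}: once EF1 and EF2 are in hand, it suffices to exhibit two points $\zeta_1\neq\zeta_2\in\mathbb S$ such that $t\mapsto\Psi_{0,t}(\zeta_j)$ is of class $AC^d$ and $\Psi_{0,t}(\zeta_1)\neq\Psi_{0,t}(\zeta_2)$ for all $t\ge0$. Your own computation, specialised to $s=0$, already shows $t\mapsto\Psi_{0,t}(\zeta_0)$ is $AC^d$; taking the deck-translate $\zeta_0+2\pi i/\log r(0)$ as the second point (and checking the periodicity $\Psi_{0,t}(\zeta+2\pi i/\log r(0))=\Psi_{0,t}(\zeta)+2\pi i/\log r(t)$, as the paper does in the proof of Lemma~\ref{LM_lifting_evolut_fam}) yields a second $AC^d$ trajectory that never meets the first. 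The uniformity in $s$ is then produced \emph{internally} by Lemma~\ref{LM_Santiago}, not by a direct estimate.
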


The proofs are given in Section~\ref{S_proofs1} and based on some lemmas we are going to establish
in the next section.

\subsection{Some auxiliary statements}\label{SS_aux}

\REM{By $\mathbb U(r_1,r_2)$ we will denote the subclass consisting of all univalent
functions~$\psi\in\classM(r_1,r_2)$.}
%
%

\REM{Now let $\psi\in \classM(r_1,r_2)$. The function $F_\psi (z):=\log\big(\psi(z)/z\big)$ is
analytic and single-valued in~$\mathbb A_{r_1}$, because for any closed curve $\gamma\subset
\mathbb A_{r_1}$ the increment of $\log \psi(z)$ along $\gamma$ is the same as that of $\log z$.
However, $F_\psi$ is defined up to an additive constant of the form $2\pi i n$, $n\in\mathbb Z$. To
avoid this ambiguity we consider the quantity $\mathcal M(\psi):=\exp\mathcal N(F_\psi)$. Now we
define the subclasses $\classM_0(r_1,r_2)$ and $\mathbb U_0(r_1,r_2)$ of the classes
$\classM(r_1,r_2)$ and $\mathbb U(r_1,r_2)$, respectively, consisting of functions $\psi$ with
$\mathcal M(\psi)=1$.
}

\REM{ Note that the classes $\classM(r_1,r_2)$ and
$\classM_0(r_1,r_2)$ have a property, similar to that of
semigroups.
\begin{proposition}\label{PR_semigroup-pr}
Let $1>r_1\ge r_2\ge r_3>0$. The following statements hold:
\begin{itemize}
\item[(i)] if $\psi_1\in \classM(r_1,r_2)$ and $\psi_2\in \classM(r_2,r_3)$, then
$\psi_2\circ\psi_1\in \classM(r_1,r_3)$, with\\ ${\mathcal M(\psi_3)=\mathcal M(\psi_1)\mathcal
M(\psi_2)}$;
\item[(ii)] if $\psi_1\in \classM_0(r_1,r_2)$ and $\psi_2\in \classM_0(r_2,r_3)$, then
$\psi_2\circ\psi_1\in \classM_0(r_1,r_3)$.
\end{itemize}
\end{proposition}
[PROOF IS TO BE WRITTEN] }

\begin{lemma}\label{LM_evol-fam-classM}
Suppose $\big((D_t),(\varphi_{s,t})\big)$ is an evolution family of order $d\in[1,+\infty]$.
Let $s\ge 0$. Then the following statements are true:
\begin{mylist}
\item[(i)] for each $z\in D_s$ the function $t\mapsto \varphi_{s,t}(z)$ belongs to
$AC^d\big([s,+\infty),\Complex\big)$;
\item[(ii)] the mapping $t\mapsto\varphi_{s,t}\in \Hol(\mathbb A_{r(s)},\mathbb D^*)$, $\mathbb D^*:=\mathbb
D\setminus\{0\}$, is continuous in $[s,+\infty)$;
\item[(iii)] $\varphi_{s,t}\in \classM\big(r(s),r(t)\big)$ for any $t\ge s$;
\item[(iv)] $\varphi_{s,t}$ is univalent in $D_s$ for any $t\ge s$;
\REM{\item[(v)] the function $t\mapsto \mathcal M_{s,t}:=\mathcal M(\varphi_{s,t})$ is continuous
on $[s,+\infty)$, with $\mathcal M_{s,t}=\mathcal M_{0,t}/\mathcal M_{0,s}$.
}
\end{mylist}
\end{lemma}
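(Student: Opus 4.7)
For (i), applying EF3 of Definition~\ref{def-ev} on the interval $I=[s,T]$ with any $T>s$ reads off precisely the defining bound of $AC^d\bigl([s,T],\Complex\bigr)$ with pointwise derivative dominated by $k_{z,I}\in L^d$. For~(ii), I combine~(i) with Montel's theorem: the family $\{\varphi_{s,t}:t\in[s,T]\}$ takes values in~$\UD$, hence is uniformly bounded and normal in $\Hol(\mathbb A_{r(s)},\Complex)$; if $t_n\to t_0$, any subsequential limit agrees with $\varphi_{s,t_0}$ pointwise by~(i), so the whole sequence converges to $\varphi_{s,t_0}$ in the compact-open topology. Since $\varphi_{s,t_0}$ takes values in $D_{t_0}\subset\UD^*$, continuity takes place in $\Hol(\mathbb A_{r(s)},\UD^*)$.

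For (iii), fix any closed curve $\gamma\colon[0,1]\to D_s$. By~(ii) the map $(u,\theta)\mapsto\varphi_{s,u}\bigl(\gamma(\theta)\bigr)\in\UD^*\subset\Complex\setminus\{0\}$ is jointly continuous, so the winding number $u\mapsto I(\varphi_{s,u}\circ\gamma)\in\Z$ about the origin is a continuous integer-valued function of~$u$, hence constant on~$[s,+\infty)$. It equals $I(\gamma)$ at $u=s$ because $\varphi_{s,s}=\id_{D_s}$; this is the defining property of $\varphi_{s,t}\in\classM\bigl(r(s),r(t)\bigr)$.

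Assertion (iv) is the main obstacle, and I plan to argue by contradiction using the composition law EF2. Suppose $\varphi_{s,T}(z_1)=\varphi_{s,T}(z_2)$ for some distinct $z_1,z_2\in D_s$ and some $T>s$, and let $t_0$ be the infimum of such times; by~(ii) one has $t_0>s$ and the infimum is attained, so $\varphi_{s,t_0}(z_1)=\varphi_{s,t_0}(z_2)=:w^*\in D_{t_0}$. The intermediate images $w_i(u):=\varphi_{s,u}(z_i)$ are distinct for $u<t_0$, converge to~$w^*$ as $u\to t_0^-$, and satisfy $\varphi_{u,t_0}\bigl(w_i(u)\bigr)=w^*$ by EF2. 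The crucial step is to show $\varphi_{u,t_0}\to\id$ in the compact-open topology on a neighbourhood of~$w^*$ as $u\to t_0^-$; Cauchy's derivative estimate then yields $|\varphi_{u,t_0}'-1|<1/2$ on a small closed disk $K\ni w^*$ for $u$ close to~$t_0$, so $\varphi_{u,t_0}$ is injective on~$K$. This contradicts the equality $\varphi_{u,t_0}(w_1(u))=\varphi_{u,t_0}(w_2(u))=w^*$, since $w_1(u)\neq w_2(u)$ and both eventually lie in~$K$.

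The uniform convergence $\varphi_{u,t_0}\to\id$ will be extracted by Montel plus the identity theorem. Pick an open set $U\ni w^*$ with $U\subset D_u$ for all $u$ near~$t_0$ (possible by continuity of $r(\cdot)$, since $r(u)\to r(t_0)<|w^*|$); then $\{\varphi_{u,t_0}|_U\}$ is uniformly bounded and thus normal. Any subsequential limit~$\psi$ is identified via EF2: for $s_0<t_0$ close to~$t_0$ and $z\in D_{s_0}$, the relation $\varphi_{u,t_0}\bigl(\varphi_{s_0,u}(z)\bigr)=\varphi_{s_0,t_0}(z)$ combined with $\varphi_{s_0,u}(z)\to\varphi_{s_0,t_0}(z)$ from~(i) forces $\psi$ to fix every point of $\varphi_{s_0,t_0}(D_{s_0})$, which is an open neighbourhood of~$w^*$ by a non-constancy argument carried out along the lines of Lemma~\ref{LM_non-const} (using Montel again to upgrade the pointwise convergence $\varphi_{s,t}\to\id_{D_s}$ as $t\to s^+$ to uniform convergence on compacta, which is exactly what the original proof of Lemma~\ref{LM_non-const} requires). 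The identity principle then gives $\psi=\id$ on~$U$, completing the argument. The genuine technical hurdle is precisely this chain of normality and identification arguments, carried out while carefully tracking that the varying annular domains~$D_u$ contain the relevant compact sets as $u\to t_0^-$.
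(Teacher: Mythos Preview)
Your proof is correct, and parts (i)--(iii) match the paper's argument essentially verbatim. For part (iv) you follow the same global strategy as the paper --- contradiction via the infimum time $t_0$ of coalescence, then show $\varphi_{u,t_0}\to\id$ locally uniformly near $w^*$ and deduce local injectivity --- but your route to the convergence $\varphi_{u,t_0}\to\id$ is more roundabout than necessary.

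The paper obtains pointwise convergence $\varphi_{u,t_0}(z)\to z$ for each $z$ in the chosen neighbourhood $U$ \emph{directly} from EF3: choosing an interval $I=[u_1,t_0]$ with $U\subset D_{u_1}$, one has for every $z\in U$
\[
|z-\varphi_{u,t_0}(z)|=|\varphi_{u,u}(z)-\varphi_{u,t_0}(z)|\le\int_{u}^{t_0}k_{z,I}(\xi)\,d\xi\to 0
\quad\text{as }u\to t_0^-,
\]
and then normality upgrades this to locally uniform convergence. You instead extract a subsequential limit $\psi$ via Montel and identify it through the composition law EF2 combined with a non-constancy argument. This works, but two simplifications are available: first, the direct EF3 estimate above bypasses the whole identification step; second, even within your approach, non-constancy of $\varphi_{s_0,t_0}$ is an immediate consequence of~(iii) already proved (a constant map would send every closed curve to one of index~$0$, contradicting $\varphi_{s_0,t_0}\in\classM$), so there is no need to adapt Lemma~\ref{LM_non-const}.
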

\begin{proof}
To prove (i) we need only to apply condition EF3 from Definition~\ref{def-ev}
for $S:=s$. Fix $t\ge s$. From (i) it now follows that
$\varphi_{s,u}(z)\to\varphi_{s,t}(z)$ pointwise in $D_s$ as $u\to t$. The
functions $\varphi_{s,t}$, $t\ge s$, form (for fixed~$s\ge0$) a normal family
in~$D_s$. Therefore, the pointwise convergence implies convergence of
$\varphi_{s,u}$ to $\varphi_{s,t}$ in~$\Hol(D_s,\Complex)$. This proves~(ii).

Now let us take any closed curve $\gamma:[0,1]\to D_s$. Fix $t\ge s$. Recall that
$\varphi_{s,s}=\id_{D_s}$. Hence, (ii) implies that $g(u,x):=\varphi_{s,u}\big(\gamma(x)\big)$,
$u\in[s,t]$, $x\in[0,1]$, provides us with a homotopical deformation of the curve $\gamma$ into the
curve $\varphi_{s,t}\circ\gamma$ within the domain~$D_t$. It follows that
$I(\varphi_{s,t}\circ\gamma)=I(\gamma)$. Since $\gamma$ is chosen arbitrarily, (iii) is now also
proved.

To prove (iv) we argue as in~\cite[Proposition~3]{BCM2}. Assume  that there exist $s\ge0$, $t>s$
and $z_1,z_2\in D_s$ such that $z_1\neq z_2$ but $\varphi_{s,t}(z_1)=\varphi_{s,t}(z_2)$. Denote
$\zeta_j(u):=\varphi_{s,u}(z_j)$, $j=1,2$. Let $u_0:=\inf\{u\in[s,t]:\zeta_1(u)=\zeta_2(u)\}$.
Clearly, $u_0\in[s,t]$. From (i) we know that the functions $\zeta_j$ are continuous. Therefore,
$\zeta_1(u_0)=\zeta_2(u_0):=\zeta_0$. In particular, $u_0\neq s$. At the same time, by construction
\begin{equation}\label{EQ_z1z2}
\zeta_1(u)\neq\zeta_2(u),\quad u\in[s,u_0).
\end{equation}
Let $U\ni \zeta_0$ be any domain such that $\overline U\subset D_{u_0}$. Then there exists
$u_1\in[s,u_0)$ such that $\{\zeta_1(u),\zeta_2(u)\}\subset U\subset D_u$ for all $u\in[u_1,u_0]$.
In particular, the functions $\varphi_{u,u_0}$ are well-defined and holomorphic in~$U$ for all
$u\in[u_1,u_0]$. By condition EF2 in Definition~\ref{def-ev},
\begin{equation}\label{EQ_u1u0}
\varphi_{u,u_0}\big(\zeta_1(u)\big)=\varphi_{u,u_0}\big(\zeta_2(u)\big)=\zeta_0,\quad
u\in[u_1,u_0].
\end{equation}

Now we claim that $\varphi_{u,u_0}|_U\to\id_U$ in $\Hol(U,\Complex)$ as $u\to
u_0-0$. The pointwise convergence $\varphi_{u,u_0}(z)\to z$, $z\in U$, is a
consequence of condition EF3 in Definition~\ref{def-ev} applied with
$(s,u,u,u_0,t)$ substituted for $(S,s,u,t,T)$. Since the functions
$\varphi_{u,u_0}$, $u\in[u_1,u_0]$, form a normal family in $U$, the pointwise
convergence implies convergence in~$\Hol(U,\Complex)$. In particular, it
follows that given a sufficiently small open neighborhood~$W$ of the
point~$\zeta_0$, the function $\varphi_{u,u_0}$ is univalent on $W$ provided
$u$ is sufficiently close to $u_0$. The fact that this statement contradicts
relations~\eqref{EQ_z1z2} and~\eqref{EQ_u1u0} proves assertion~(iv).
\end{proof}

\begin{lemma}\label{LM_pre_lifting_evolut_fam} Under conditions of Theorem~\ref{TH_1punto} assertion~(c) holds.
\end{lemma}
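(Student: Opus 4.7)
If hypothesis (c) of Theorem~\ref{TH_1punto} is assumed, there is nothing to prove, so the task reduces to showing that each of (a) and (b) implies~(c). In both cases the strategy mimics the proof of Lemma~\ref{LM_evol-fam-classM}\,(iii): given a closed curve $\gamma$ in the appropriate annulus, I will produce a continuous homotopy in $\UD^{*}$ joining $\gamma$ to $\varphi_{s,t}\circ\gamma$, using the identity element $\varphi_{\tau,\tau}=\id_{D_\tau}$ as the anchor at the other end. Since homotopies in $\UD^{*}\subset\Complex\setminus\{0\}$ preserve the winding number around the origin, this forces $I(\varphi_{s,t}\circ\gamma)=I(\gamma)$.

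Recall that $r$ is non-increasing, so $s\le t$ gives $D_s\subset D_t$; in particular, for $s\le u\le t$ the restriction $\varphi_{u,t}|_{D_s}$ makes sense, and every curve in $D_s$ lies in $D_u$ as well. The common feature in both cases will be a one-parameter family of holomorphic maps into $\UD$, which is automatically uniformly bounded and therefore normal. Combined with pointwise continuity supplied by (a) or (b), a standard Vitali-type argument upgrades pointwise to local-uniform continuity in $\Hol(\cdot,\Complex)$, which in turn gives joint continuity of the composition with $\gamma$.

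Assume (b). Fix $s\ge0$, $t\ge s$, and a closed curve $\gamma:[0,1]\to D_s$. The family $\{\varphi_{s,u}:u\in[s,t]\}\subset\Hol(D_s,\UD)$ is uniformly bounded, hence normal; pointwise continuity of $u\mapsto\varphi_{s,u}(z)$ from~(b) then gives continuity of $u\mapsto\varphi_{s,u}$ in $\Hol(D_s,\Complex)$. Thus $g(u,x):=\varphi_{s,u}(\gamma(x))$ is jointly continuous on $[s,t]\times[0,1]$, and $g(u,\cdot)$ is a closed curve contained in $D_u\subset\UD^{*}$. Since $g(s,\cdot)=\gamma$ (from EF1) and $g(t,\cdot)=\varphi_{s,t}\circ\gamma$, this is the desired homotopy in~$\UD^{*}$.

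Assume (a). Fix $s_0\ge0$, $t\ge s_0$, and a closed curve $\gamma:[0,1]\to D_{s_0}$. This time I vary the first argument: for $s\in[s_0,t]$ one has $D_{s_0}\subset D_s$, so $\{\varphi_{s,t}|_{D_{s_0}}:s\in[s_0,t]\}\subset\Hol(D_{s_0},\UD)$ is again uniformly bounded, hence normal, and (a) upgrades pointwise to local-uniform continuity of $s\mapsto\varphi_{s,t}|_{D_{s_0}}$ in $\Hol(D_{s_0},\Complex)$. Therefore $g(s,x):=\varphi_{s,t}(\gamma(x))$ is jointly continuous on $[s_0,t]\times[0,1]$, and $g(s,\cdot)$ is a closed curve in $D_t\subset\UD^{*}$; at the endpoints $g(s_0,\cdot)=\varphi_{s_0,t}\circ\gamma$ and $g(t,\cdot)=\gamma$ (by EF1), so again the required homotopy in~$\UD^{*}$ exists. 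The only bookkeeping one has to do carefully is the direction of nesting of the $D_\tau$'s, which dictates in each case which restriction is legitimate and on which annulus the anchor $\varphi_{\tau,\tau}=\id$ is used; beyond this, no serious obstacle arises.
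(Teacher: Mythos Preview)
Your proof is correct and follows essentially the same approach as the paper: in each case you build a homotopy in $\UD^{*}$ between $\gamma$ and $\varphi_{s,t}\circ\gamma$ by sliding one of the two time parameters and anchoring at the identity via EF1, after upgrading pointwise continuity to continuity in $\Hol(\cdot,\Complex)$ via normality. The paper's proof is identical in spirit, written slightly more tersely (it phrases the homotopy as a map $(\theta,z)\mapsto\varphi_{s,t(\theta)}(z)$, respectively $(\theta,z)\mapsto\varphi_{s(\theta),t}(z)$, rather than precomposing with a fixed curve).
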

\begin{proof}
Assume first that (b) holds. Fix any $s\ge0$. By normality of the family
$(\varphi_{s,t})_{t\ge0}$ in $D_s$, assertion~(b) implies that the map
$[s,+\infty)\ni t\mapsto\varphi_{s,t}\in\Hol(D_s,\UD^*)$ is continuous. Hence
as in the proof of Lemma~\ref{LM_evol-fam-classM} we can conclude that for any
fixed $t\ge s$ the map $[0,1]\times
D_s\ni(\theta,z)\mapsto\varphi_{s,t(\theta)}(z)\in\UD^*$, where
$t(\theta):=(1-\theta)t+\theta s$, provides us with a homotopical family in
$\UD^*$ joining $\varphi_{s,t}$ with $\id_{D_s}$. It follows immediately that
$\varphi_{s,t}\in\classM\big(r(s),r(t)\big)$. This proves that
(b)$\Rightarrow$(c).

The proof of the implication~(a)$\Rightarrow$(c) is similar. Combining (a) with the normality
argument we can conclude that for each $s\ge0$ and $t\ge s$ the map $[0,1]\times
D_s\ni(\theta,z)\mapsto\varphi_{s(\theta),t}(z)\in\UD^*$, where $s(\theta):=(1-\theta)s+\theta t$,
is a homotopical family in $\UD^*$ joining $\varphi_{s,t}$ with $\id_{D_s}$. Again (c) follows
immediately.
\end{proof}
Denote by $\mathbb S_r$, $r\in(0,1)$, the strip $\{z:\,\log r<\Re z<0\}$ and by
$\mathbb S_0$ the left half-plane $\{z:\,\Re z<0\}$.
\begin{lemma}\label{LM_lifting_evolut_fam}
Under conditions of Theorem~\ref{TH_1punto} there exists an evolution family $(\Phi_{s,t})$ of
order~$d$ in $\UH$ such that for all $s\ge0$ and all $t\ge s$ we have
\begin{equation}\label{EQ_G-phi0}
B_t\circ\Phi_{s,t}=\varphi_{s,t}\circ B_s,
\end{equation}
where $B_\tau(w):=\exp Q\big(w,\omega_\tau\big)$, $\tau\ge0$, and
$$Q(w,\omega):=\frac{i}{\omega}\log\frac{1+\omega w}{1-\omega w},~Q(w,0):=2iw,~ \omega_{\tau}:=\omega(r(\tau))=\left\{\begin{array}{ll}
-\pi/\log r(\tau),& \mathrm{if~}r(\tau)>0,\\0,&\mathrm{if~}r(\tau)=0.\end{array}\right.$$
\end{lemma}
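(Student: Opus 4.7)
The plan is to construct $(\Phi_{s,t})$ by lifting each $\varphi_{s,t}$ through the universal covering maps $B_\tau\colon\UH\to D_\tau$, pinning the lifts down along a distinguished trajectory, and then to verify the three axioms of an $L^d$-evolution family, with EF3 following from Proposition~\ref{PR_2trajectories}.

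By Lemma~\ref{LM_pre_lifting_evolut_fam} we may assume condition~(c), so every $\varphi_{s,t}$ preserves winding number around $0$. Factor $B_\tau=\exp\circ\,F_\tau$, where $F_\tau:=Q(\cdot,\omega_\tau)\colon\UH\to\mathbb S_{r(\tau)}$ is a conformal (M\"obius) isomorphism and $\exp\colon\mathbb S_{r(\tau)}\to D_\tau$ is the standard universal cover with deck group generated by $\zeta\mapsto\zeta+2\pi i$. Fix $w_0\in\UH$ with $B_0(w_0)=z_0$, take the continuous branch $\eta(t):=\log\varphi_{0,t}(z_0)$ with $\eta(0)=F_0(w_0)$, and set $w(t):=F_t^{-1}(\eta(t))$; since $r(t)<|\varphi_{0,t}(z_0)|<1$, we have $\eta(t)\in\mathbb S_{r(t)}$, so $w(t)\in\UH$ and $B_t(w(t))=\varphi_{0,t}(z_0)$. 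For $0\le s\le t$, let $\Phi_{s,t}\colon\UH\to\UH$ be the unique holomorphic lift of $\varphi_{s,t}\circ B_s$ through $B_t$ with $\Phi_{s,t}(w(s))=w(t)$ (existence and uniqueness follow from the covering-lifting theorem applied on the simply connected domain $\UH$). Then $\Phi_{s,s}$ is a deck transformation of $B_s$ fixing $w(s)$, hence the identity since the deck group acts freely, giving EF1; and $\Phi_{u,t}\circ\Phi_{s,u}$ and $\Phi_{s,t}$ are both lifts of $\varphi_{s,t}\circ B_s$ through $B_t$ agreeing at $w(s)$, hence coincide, giving EF2.

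For EF3 I would apply Proposition~\ref{PR_2trajectories} in $D:=\UH$ to the two trajectories $t\mapsto\Phi_{0,t}(w_0)=w(t)$ and $t\mapsto\Phi_{0,t}(T(w_0))$, where $T:=F_0^{-1}\circ(\zeta\mapsto\zeta+2\pi i)\circ F_0$ generates the deck group of $B_0$. The first trajectory is $AC^d$: the function $\varphi_{0,t}(z_0)\in AC^d$ remains in a compact subset of $\UD^*$ on each compact $t$-interval, so $\eta\in AC^d$; since $\omega_t\in AC^d$ by the definition of canonical domain system and $(\eta,\omega)\mapsto F^{-1}_\omega(\eta)$ is smooth with bounded derivatives on the relevant region, the chain rule yields $w\in AC^d$. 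Passing to strip coordinates via $\Psi_{0,t}:=F_t\circ\Phi_{0,t}\circ F_0^{-1}$, the identity $\exp\circ\Psi_{0,t}=\varphi_{0,t}\circ\exp$ forces $\Psi_{0,t}(\zeta+2\pi i)-\Psi_{0,t}(\zeta)$ to be a constant integer multiple of $2\pi i$, equal to the winding number of the image under $\varphi_{0,t}$ of a loop going once around $0$, which is $1$ by condition~(c). Hence $\Phi_{0,t}(T(w_0))=F_t^{-1}(\eta(t)+2\pi i)$, which is $AC^d$ by the same computation and differs from $w(t)$ for every $t$ by injectivity of $F_t^{-1}$.

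The main obstacle is the deck-transformation bookkeeping in the preceding paragraph: condition~(c) is used essentially, for without winding-number preservation the lift $\Phi_{0,t}$ could shift the chosen generator by an integer $n\ne1$, spoiling the explicit translation-by-$2\pi i$ formula on which both the $AC^d$ regularity and the separation of the second trajectory from the first rely. Once this identification is in place, the remainder is routine covering-space manipulation together with an invocation of Proposition~\ref{PR_2trajectories}.
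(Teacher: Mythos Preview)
Your proposal is correct and follows essentially the same approach as the paper's own proof: both lift $\varphi_{s,t}\circ B_s$ through the covering $B_t$ along a distinguished trajectory $w(t)$ obtained from a continuous branch of $\log\varphi_{0,t}(z_0)$, verify EF1 and EF2 by uniqueness of lifts, and then apply Proposition~\ref{PR_2trajectories} to the two trajectories $w(t)$ and its image under the deck translation by $2\pi i$, using condition~(c) to identify $\Phi_{0,t}$ of the translated point explicitly. The only differences are notational (your $F_\tau$, $\eta$ are the paper's $Q(\cdot,\omega_\tau)$, $\zeta$, and the paper writes the inverse $R(\zeta,\omega)$ explicitly to carry out the $AC^d$ chain-rule computation).
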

\begin{remark}\label{RM_LM}
By $\log$ in the formula for the function $Q$ in the above lemma we mean the
single-valued branch of the logarithm in $\Complex\setminus(-\infty,0]$ that
vanishes at $\zeta=1$. Hence the function
 $Q$ being extended by continuity to $\omega=0$ is well-defined and holomorphic in the
domain $\big\{(w,\omega)\in\Complex^2:\,\omega w\not\in(-\infty,-1]\cup[1,+\infty)\big\}$. For each
$\omega\ge0$ the function $Q(\cdot,\omega)$ maps $\mathbb H$ conformally onto the strip $\mathbb
S_r$, where $r:=e^{-\pi/\omega}$, if $\omega>0$, or onto the left half-plane~$\mathbb S_0$ if
$\omega=0$.
\end{remark}
\begin{proof}[Proof of Lemma~\ref{LM_lifting_evolut_fam}]
Owing to Lemma~\ref{LM_pre_lifting_evolut_fam} we can assume that assertion (c) from
Theorem~\ref{TH_lifting_evolut_fam} takes place. Let us construct first the family $(\Phi_{s,t})$
and then prove that it is an $L^d$-evolution family.

Denote by $R(\cdot,\omega)$ the function inverse to $Q(\cdot,\omega)$, i.e.
$$
R(\zeta,\omega)=\frac1\omega\,\frac{e^{-i\zeta\omega}-1}{e^{-i\zeta\omega}+1},
$$
for $\omega\neq0$, and $R(\zeta,0)=-i\zeta/2$ when $\omega=0$. Therefore $R$ is
holomorphic in
$\Complex^2\setminus\big\{(\zeta,\omega):\zeta\omega=\pi(n+1/2)~\text{for some
}n\in\mathbb Z\big\}$. Consider the curve $[0,+\infty)\ni t\mapsto
z(t):=\varphi_{0,t}(z_0)\in\UD^*$ and let $t\mapsto\zeta(t)$ be any of its
liftings  w.r.t. the covering map $\exp:\mathbb S_0\to\UD^*$. Finally, define
$w(t):=R\big(\zeta(t),\omega_t\big)$ for all $t\ge0$, where $\omega_t$ is
introduced in the statement of Lemma~\ref{LM_lifting_evolut_fam}. Then
$t\mapsto w(t)\in \mathbb H$ is continuous and $B_t(w(t))=z(t)$ for all
$t\ge0$.

Fix any $s\ge0$ and any $t\ge s$. Taking into account EF2, we have $(\varphi_{s,t}\circ
B_s)(w(s))=z(t)$. Hence $(\varphi_{s,t}\circ B_s)(w(s))=B_t(w(t))$. Note that according to
Remark~\ref{RM_LM}, $B_t:\UH\to \mathbb A_{r(t)}$ is a covering map. It follows that there exists a
unique lifting $F$ of $\varphi_{s,t}\circ B_s:\UH\to\UD^*$ w.r.t. $B_t$ which takes~$w(s)$ to
$w(t)$. Now put $\Phi_{s,t}:=F$.

The above argument defines a family $(\Phi_{s,t})_{0\le s\le t}\subset\Hol(\UH,\UH)$.
Equality~\eqref{EQ_G-phi0} takes place by construction, while conditions EF1 and EF2 for
$(\Phi_{s,t})$ follow from conditions EF1 and EF2 for $\varphi_{s,t}$ and the uniqueness of the
lifting. Now according to Theorem~\ref{PR_2trajectories} it remains to find two points
$w_1,w_2\in\mathbb H$ such that the functions $t\mapsto w_1(t):=\Phi_{0,t}(w_1)$ and $t\mapsto
w_2(t):=\Phi_{0,t}(w_2)$ are of class~$AC^d$ and $w_1(t)\neq w_2(t)$ for all $t\ge0$.

Put $w_1:=R(\zeta(0),\omega_0)$, $w_2:=R(\zeta(0)+2\pi i,\omega_0)$. Then by construction,
$w_1(t)=w(t)$ for all $t\ge0$. We claim that
\begin{equation}\label{EQ_w_2}
w_2(t)=R\big(\zeta(t)+2\pi i,\,\omega(t)\big)\quad \text{for all $t\ge0$.}
\end{equation}
The above equality is obviously equivalent to stating that $\phi_t(\zeta(0)+2\pi i)=\zeta(t)+2\pi
i$, where $\phi_t=Q(\cdot,\omega_t)\circ\Phi_{0,t}\circ R(\cdot,\omega_0)$. Note that
$\phi_t(\zeta(0))=\zeta(t)$. Hence to prove~\eqref{EQ_w_2} it is sufficient now to show that
\begin{equation}\label{EQ_phi}
\phi_t(\zeta+2\pi i)=\phi_t(\zeta)+2\pi i\quad\text{for all $\zeta\in\mathbb S_{r(0)}$ and all
$t\ge0$.}
\end{equation}
Recall that for each $s\ge0$ and $t\ge s$, the function $\Phi_{s,t}$ was constructed to be a
lifting of $\varphi_{s,t}\circ B_s:\UH\to\UD^*$ w.r.t. $B_t$. Therefore, for each $t\ge0$, the
function $\phi_t$ is the lifting of $\varphi_{0,t}\circ\exp\!|_{\mathbb S_{r(0)}}$ w.r.t.
$\exp\!|_{\mathbb S_{r(t)}}$. It follows that for each fixed $\zeta\in\mathbb S_{r(0)}$ the curve
$\phi_t\circ \gamma$, where $\gamma:[0,1]\to\mathbb S_{r(0)}; \theta\mapsto \zeta+2\pi i\theta$, is
a lifting of the curve $\gamma_1:=\varphi_{0,t}\circ\exp\circ\gamma$ w.r.t.~$\exp\!|_{\mathbb
S_{r(t)}}$. Consequently, taking into account that $\varphi_{0,t}\in\classM\big(r(0),r(t)\big)$, we
get
\begin{equation*} \phi_t(\zeta+2\pi
 i)-\phi_t(\zeta)=\int_{\gamma_1}\frac{dz}{z}=2\pi i\,I(\gamma_1)=2\pi
 i\,I(\exp\circ\gamma)=2\pi i,\\
\end{equation*}
where $I(\cdot)$ stands for the index of the origin  w.r.t. a curve.

Now it remains to show that $t\mapsto w_1(t)$ and $t\mapsto w_2(t)$ are of class $AC^d$. Recall
that $t\mapsto \zeta(t)$ is a lifting of $t\mapsto\varphi_{0,t}(z_0)$, which is of class $AC^d$.
Hence $t\mapsto \zeta(t)$ is of class $AC^d$ as well. Finally, by definition $t\mapsto\omega_t$ is
also of class $AC^d$. Therefore $w_1(t)=w(t)=R\big(\zeta(t),\,\omega_t\big)$ is locally absolutely
continuous and the derivative
$$
\frac{dw_1(t)}{dt}=R'_\zeta\big(\zeta(t),\,\omega_t\big)\frac{d\zeta(t)}{dt}+R'_\omega\big(\zeta(t),\,\omega_t\big)\frac{d\omega_t}{dt}
$$
belongs to $L^d_{\rm loc}\big([0,+\infty),\Complex\big)$, i.e. $w_1\in
AC^d\big([0,+\infty),\UH\big)$. By a similar argument, $w_2(t)\in
AC^d\big([0,+\infty),\UH\big)$. The proof is now complete.
\end{proof}

\subsection{Proofs}\label{S_proofs1}\nopagebreak
\begin{proof}[\bf Proof of Theorem~\ref{TH_1punto}.] We have to show that $(\varphi_{s,t})$ satisfies EF3.
To this end we take advantage of Lemma~\ref{LM_lifting_evolut_fam} stating
existence of an $L^d$-evolution family $(\Phi_{s,t})$ in~$\UH$ such
that~\eqref{EQ_G-phi0} holds. Below we use the notation introduced in the
statement and proof of this lemma.

Equality~\eqref{EQ_G-phi0} can be written in the following form:
\begin{equation}\label{EQ_G-phi0shift}
\exp\circ\,\phi_{s,t}=\varphi_{s,t}\circ\exp\quad\text{for any $s\ge0$ and any $t\ge s$},
\end{equation}
where $\phi_{s,t}$ stands for $Q(\cdot,\omega_t)\circ\Phi_{s,t}\circ R(\cdot,\omega_s):\,\mathbb
S_{r(s)}\to\mathbb S_{r(t)}$. Clearly it is sufficient to prove the following
\Unit{\underline{Claim 1}.} \it%
 For any closed interval $I:=[S,T]\subset[0,+\infty)$ and any compact set $K\subset\mathbb
 S_{r(S)}$ there exists a non-negative function $k_{K,I}\in L^d\big(I,\Real\big)$ such that
 \begin{equation}\label{EQ_phi-claim}
 \|\phi_{s,t}-\phi_{s,u}\|_K\le\int_{u}^{t}k_{K,I}(\xi)d\xi
 \end{equation}
 for any $s,u,t\in I$ such that $s\le u\le t$.
\endstep%
To prove the above claim we fix $I:=[S,T]\subset[0,+\infty)$ and a compact set $K\subset\mathbb
 S_{r(S)}$  and consider the set
$K_1:=\cup_{s\in I}R(K,\omega_s)$. Since $R(\zeta,\omega_s)$ is jointly
continuous in $\zeta$ and~$s$ on $\mathbb S_{r(S)}\times I$, the set
$K_1\subset \UH$ is compact. Furthermore, it follows from Lemma~\ref{LM_contD}
that $K_2:=\cup_{S\le s\le t\le T}\Phi_{s,t}(K_1)$ is a compact set in~$\UH$.
Finally, the function $Q(w,\omega)$ is holomorphic in a neighborhood of
$K_2\times[0,+\infty)$ and the function $\tau\mapsto\omega_\tau$ is continuous
on $I$. Hence there exists a constant $C_1=C_1(K_2,I)>0$ such that
$\|Q(\cdot,\omega_t)-Q(\cdot,\omega_u)\|_{K_2}\le C_1|\omega_t-\omega_u|$ for
any $t,u\in I$. By the same reason there exists a constant $C_2=C_2(K_2,I)>0$
such that
$\|Q(\cdot,\omega_u)\circ\Phi_{s,t}-Q(\cdot,\omega_u)\circ\Phi_{s,u}\|_{K_1}\le
C_2\|\Phi_{s,t}-\Phi_{s,u}\|_{K_1}$ for all $u\in I$.

Now we can estimate the left-hand side in~\eqref{EQ_phi-claim} for any $s,u,t\in I$, $s\le u\le t$,
as follows:
\begin{multline*}
 \|\phi_{s,t}-\phi_{s,u}\|_K=%
 \big\|Q(\cdot,\omega_t)\circ\Phi_{s,t}\circ R(\cdot,\omega_s)-
 Q(\cdot,\omega_u)\circ\Phi_{s,u}\circ R(\cdot,\omega_s)\big\|_K\le
\\\vphantom{\int}
 \big\|Q(\cdot,\omega_t)\circ\Phi_{s,t}-
 Q(\cdot,\omega_u)\circ\Phi_{s,u}\big\|_{K_1}\le
 \big\|Q(\cdot,\omega_t)-Q(\cdot,\omega_u)\big\|_{K_2}
\\
 +\big\|Q(\cdot,\omega_u)\circ\Phi_{s,t}-Q(\cdot,\omega_u)\circ\Phi_{s,u}\big\|_{K_1}\le
 C_1|\omega_t-\omega_u|+C_2\|\Phi_{s,t}-\Phi_{s,u}\|_{K_1}.
\end{multline*}
Thus our claim follows from assertion~(ii) of Theorem~\ref{TH_SC-EF-Diff} and from the fact that by
Definition~\ref{def-cansys}, $\tau\mapsto\omega_\tau=\omega(r(t))$ is of class $AC^d$. The proof is
finished.
\end{proof}

\begin{proof}[\bf Proof of Proposition~\ref{PR_uniformity}.]
By condition, $(\varphi_{s,t})$ is an evolution family of order~$d$ over a canonical
$L^d$-system~$(D_t)$. Hence, obviously, it satisfies the conditions of Theorem~\ref{TH_1punto}.
Therefore the statement of Claim~1 in the proof of Theorem~\ref{TH_1punto} is true. Now
Proposition~\ref{PR_uniformity} follows easily from~\eqref{EQ_G-phi0shift}, because the
map~$z\mapsto\exp z$ contracts the Euclidian metric in $\mathbb S_r$ for any $r\in[0,1)$.
\end{proof}

\begin{proof}[\bf Proof of Theorem~\ref{TH_lifting_evolut_fam}]
Let us first construct an $L^d$-evolution family~$(\Psi_{s,t})$
satisfying~\eqref{EQ_G-phi}. Obviously, according to
Lemma~\ref{LM_evol-fam-classM} the family $(\varphi_{s,t})$ fulfills the
conditions of Theorem~\ref{TH_1punto}. Hence by
Lemma~\ref{LM_lifting_evolut_fam}, there exists an $L^d$-evolution family
$(\Phi_{s,t})$ in $\UH$ such that~\eqref{EQ_G-phi0} holds for all $s\ge0$ and
all $t\ge s$. Recall that for each $\tau\ge0$, $B_\tau=\exp\circ
Q(\cdot,\omega_\tau)$, where $Q(\cdot,\omega_\tau)$ maps $\UH$ conformally
onto~$\mathbb S_{r(\tau)}$. Therefore, bearing in mind $r(\tau)>0$ for any
$\tau\ge0$, we have $B_\tau(w)=W_\tau\big(Q(w,\omega_\tau)/\log r(\tau)\big)$
for all $\tau\ge0$ and all $w\in\mathbb H$. It follows, that the family
$(\Psi_{s,t})$, defined by
\begin{eqnarray*}
&&\Psi_{s,t}:=P_t\circ\Phi_{s,t}\circ P_s^{-1},\quad s\ge0,~t\ge s,\\ \text{where} &&
P_\tau(w):=\frac{Q(w,\omega_\tau)}{\log r(\tau)}~~~\text{ for all~$w\in\mathbb H$ and $\tau\ge0$},
\end{eqnarray*}
satisfies equality~\eqref{EQ_G-phi}. It is also easy to see that $(\Psi_{s,t})$ fulfills conditions
EF1 and EF2. Hence, according to Proposition~\ref{PR_2trajectories}, to prove the existence
statement of Theorem~\ref{TH_lifting_evolut_fam} it remains to check that for any
$\zeta_0\in\mathbb S$ the function $[0,+\infty)\ni t\mapsto \Psi_{0,t}(\zeta)$ belongs to
$AC^d\big([0,+\infty),\Complex\big)$.

Fix any $T>0$ and any $\zeta\in\mathbb S$. Denote $w:=P_0^{-1}(\zeta)$. Since
$t\mapsto\Phi_{0,t}(w)$ is continuous,  the set $K:=\big\{\Phi_{0,t}(w):t\in[0,T]\big\}$ is
compact. Hence there exists a positive constant $C=C(\zeta,T)>0$ such that
$\big|P_t(w_2)-P_u(w_1)\big|\le C\big(|w_2-w_1|+|r(t)-r(u)|\big)$ for all $w_1, w_2\in K$ and all
$u,t\in[0,T]$. Therefore,
$$
\big|\Psi_{0,t}(\zeta)-\Psi_{0,u}(\zeta)\big|\le
C\big(|r(t)-r(u)|+|\Phi_{0,t}(w)-\Phi_{0,u}(w)|\big),
$$
whenever $u,t\in[0,T]$. The function $[0,T]\ni t\mapsto\Phi_{0,t}(w)$ is of class~$AC^d$ by EF3,
while $[0,T]\ni t\mapsto r(t)$ is of class~$AC^d$ by definition. Together with the above inequality
this means that $[0,T]\ni t\mapsto \Psi_{0,t}(\zeta)$ is also of class~$AC^d$, which completes the
proof of the fact that~$(\Psi_{s,t})$ is an $L^d$-evolution family in~$\mathbb S$.

It remains to show the uniqueness of $(\Psi_{s,t})$. To this end we fix an
arbitrary $\zeta_0\in\mathbb S$ and take $w_0:=W_0(\zeta_0)\in\mathbb
A_{r(0)}$. Now denote $w(t):=\varphi_{0,t}(w_0)$ and
$\zeta(t):=\Psi_{0,t}(\zeta_0)$ for all ${t\ge0}$. Then
$W_t\big(\zeta(t)\big)=w(t)$ for any $t\ge0$. The mapping $W_t$ is a covering
map of~$\mathbb S$ onto $\mathbb A_{r(t)}$. Hence for any $s\ge0$ and any $t\ge
s$, the function $\Psi_{s,t}$ is the lifting of the mapping~$\varphi_{s,t}\circ
W_s$ w.r.t. $W_t$ that takes $\zeta(s)$ to $\zeta(t)$. It follows that the
uniqueness of the family $(\Psi_{s,t})$ is implied by the uniqueness of the
continuous function $[0,+\infty)\ni t\mapsto \zeta(t)\in\mathbb S$ such that
$\zeta(0)=\zeta_0$ and $W_t\big(\zeta(t)\big)=w(t)$ for all~$t\ge0$. Such a
function $t\mapsto\zeta(t)$ is unique because, according to the definition of
$W_t$, the function $t\mapsto \zeta(t)\log r(t)$ is a lifting of $t\mapsto
w(t)$ w.r.t. $\exp\!:\mathbb S_0\to\UD^*$. With the value $\zeta(0)\log r(0)$
being fixed, this completes the proof.
\end{proof}

\section{Evolution families and differential equations}\label{S_EF-sWHVF}

This section contains our main results. We will establish a one-to-one
correspondence between evolution families over canonical domain systems and
semicomplete weak holomorphic vector fields, analogous to the correspondence
between evolution families and Herglotz vector fields in the unit
disk~\cite{BCM1} and in complex hyperbolic manifolds~\cite{BCM2}. Moreover, we
will give a precise constructive description of semicomplete weak holomorphic
vector fields which appear in our setting for the non-degenerate case. At the
end of the section we consider the degenerate case, which turns out to be
reducible to the case of evolution families in the unit disk.

Throughout this section we fix arbitrary $d\in[1,+\infty]$ and some canonical
domain system~$\big(D_t\big):=\big(\mathbb A_{r(t)}\big)$ of order~$d$. The set
$\mathcal D:=\{(z,t):\,t\ge0,\,z\in D_t\}$ is a relatively open subset of
$\Complex\times E$, where $E:=[0,+\infty)$. So we can apply results of
Section~\ref{S_ODE} to deduce following

\begin{theorem}\label{TH_EF<->sWHVF} The following two assertions hold:
\begin{mylist}
\item[(A)] For any $L^d$-evolution family~$(\varphi_{s,t})$ over the canonical
domain system~$(D_t)$ there exists an essentially unique semicomplete weak
holomorphic vector field $G:\mathcal D\to\Complex$ of order~$d$ and a null-set
$N\subset [0,+\infty)$ such that for all $s\ge0$ the following statements hold:
\end{mylist}
\begin{mylist}
\item[(i)] the mapping $[s,+\infty)\ni t\mapsto \varphi_{s,t}\in\Hol(D_s,\Complex)$
is locally absolutely continuous;
\item[(ii)] the mapping $[s,+\infty)\ni t\mapsto \varphi_{s,t}\in\Hol(D_s,\Complex)$ is
differentiable for all $t\in[s,+\infty)\setminus N$;
\item[(iii)] $d\varphi_{s,t}/dt=G(\cdot,t)\circ\varphi_{s,t}$ for all $t\in[s,+\infty)\setminus N$.
\end{mylist}\vskip3mm

\begin{mylist}
\item[(B)] For any semicomplete weak holomorphic vector field ${G:\mathcal D\to\Complex}$
of order~$d$ the formula $\varphi_{s,t}(z):=w^*_s(z,t)$, $t\ge s\ge0$, $z\in
D_s$, where $w_s^*(z,\cdot)$ is the unique non-extendable solution to the
initial value problem
\begin{equation}\label{EQ_genGolKom}%
 \dot w=G(w,t),\quad w(s)=z,
\end{equation}%
defines an $L^d$-evolution family over the canonical domain system~$(D_t)$.
\end{mylist}
\end{theorem}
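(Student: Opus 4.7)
The plan is to reduce both directions of the theorem to the Carath\'eodory ODE theory of Section~\ref{S_ODE}, using Proposition~\ref{PR_uniformity} as the bridge between the pointwise $L^d$-bound in EF3 and the compact-uniform $L^d$-bound required by that theory. I begin by noting that the setup of Section~\ref{S_ODE} applies: with $E:=[0,+\infty)$ and $\mathcal D:=\{(z,t):\,t\ge 0,\,z\in D_t\}$, the continuity and monotonicity of $r$ make $\mathcal D$ a relatively open, connected subset of $\Complex\times E$ whose fibres $D_t=\mathbb A_{r(t)}$ are non-empty domains, and for $s\le t$ one has the inclusion $D_s\subset D_t$.

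\textbf{Part (B).} Given a semicomplete weak holomorphic vector field $G$ of order~$d$, Proposition~\ref{PR_sol_semicomplete} produces the maps $\varphi_{s,t}:=w^*_s(\cdot,t):D_s\to D_t$ and shows that they satisfy its assertions (i)--(iv). Assertions (ii) and (iii) there coincide verbatim with EF1 and EF2 of Definition~\ref{def-ev}. For EF3, I would fix a closed interval $I:=[S,T]\subset[0,+\infty)$ and $z\in D_S$; since $D_S\subset D_t$ for all $t\in I$, the set $\{z\}\times I$ is a compact subset of $\mathcal D$, and applying assertion~(iv) with $K:=\{z\}\times I$ yields the desired $L^d$-majorant $k_{z,I}:=\tilde k_K|_I$.

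\textbf{Part (A).} Starting from an $L^d$-evolution family $(\varphi_{s,t})$ over~$(D_t)$, I would verify the hypotheses of Theorem~\ref{TH_sol_semicomplete}, namely assertions (i)--(iv) of Proposition~\ref{PR_sol_semicomplete}. The first three are immediate from Definition~\ref{def-ev}: holomorphy of each $\varphi_{s,t}$ on~$D_s$ is part of the definition, while EF1 and EF2 are (ii) and (iii). Assertion~(iv) is the compact-uniform $L^d$-continuity in~$t$, which is \emph{not} contained in EF3 but is the content of Proposition~\ref{PR_uniformity}; to pass from the form of Proposition~\ref{PR_uniformity} (fixed base time, compact set in one fibre) to the form of assertion~(iv) (compact subset of~$\mathcal D$), I would use a standard finite-cover argument, picking around each $(z_0,s_0)\in K\subset\mathcal D$ a closed box $\overline{B(z_0,\delta)}\times[s_0,s_0+\varepsilon]\subset\mathcal D$, applying Proposition~\ref{PR_uniformity} with base time $s_0$ on $\overline{B(z_0,\delta)}$, and taking the maximum of the finitely many resulting moduli. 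Theorem~\ref{TH_sol_semicomplete} then delivers a null-set~$N\subset[0,+\infty)$ and a semicomplete weak holomorphic vector field $G$ of order~$d$ satisfying conclusions (a)--(c), which match the statements (i)--(iii) of the present theorem (note $E'=E$ since $\sup E=+\infty\notin E$).

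For essential uniqueness, if $G_1,G_2$ are two such fields with associated null-sets $N_1,N_2$, then for every $t\in[0,+\infty)\setminus(N_1\cup N_2)$ the identity $G_1(\cdot,t)\circ\varphi_{0,t}=G_2(\cdot,t)\circ\varphi_{0,t}$ holds on~$D_0$. By Lemma~\ref{LM_evol-fam-classM}(iv) the map $\varphi_{0,t}$ is univalent, so its image is a non-empty open subset of the connected domain~$D_t$; analytic continuation then gives $G_1(\cdot,t)=G_2(\cdot,t)$ on all of~$D_t$, which is the required essential uniqueness. I expect no substantial obstacle: apart from the elementary covering argument just sketched, the whole proof is a translation, since the genuine analytic work has already been absorbed into Theorem~\ref{TH_sol_semicomplete} and, above all, into Proposition~\ref{PR_uniformity}, whose proof itself rests on the lifting construction of Section~\ref{S_evolDoubly}.
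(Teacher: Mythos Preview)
Your proposal is correct and follows essentially the same route as the paper: part~(B) via Proposition~\ref{PR_sol_semicomplete}, part~(A) by verifying the hypotheses of Theorem~\ref{TH_sol_semicomplete}, with assertion~(iv) obtained from Proposition~\ref{PR_uniformity} through a finite-cover argument over the compact~$K\subset\mathcal D$. The only cosmetic difference is in the essential-uniqueness step: the paper sets $s:=t$ in~(iii), so that $\varphi_{t,t}=\id_{D_t}$ identifies $G(\cdot,t)$ directly as the one-sided $t$-derivative of $\varphi_{s,t}$ at $s=t$, whereas you compare the two fields through $\varphi_{0,t}$ and invoke univalence plus the identity principle---both arguments are valid.
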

In the situation of the above theorem we will say that $G$ is the  {\it vector
field corresponding to} the evolution family~$(\varphi_{s,t})$. The phrase {\it
essentially unique} in this theorem means that for any two vector fields $G_1$
and $G_2$ corresponding to the same evolution family,
$G_1(\cdot,t)=G_2(\cdot,t)$ for a.e. $t\ge0$.
\begin{proof}
Assertion~(B) of the theorem follows readily from
Proposition~\ref{PR_sol_semicomplete}. According to
Theorem~\ref{TH_sol_semicomplete}, in order to prove~(A), we only have to check
that assertions (i)\,--\,(iv) of~Proposition~\ref{PR_sol_semicomplete} hold for
any $L^d$-evolution family~$(\varphi_{s,t})$ over~$(D_t)$. The first three of
them hold by the very definition of an evolution family over a canonical domain
system. To proof assertion (iv) of~Proposition~\ref{PR_sol_semicomplete} we fix
any compact set $K\subset\mathcal D$ and any $T>\max\pr_{\mathbb R}(K)$. Since
$K$ is compact, there exist finite sequences $(S_j)_{j=1}^n\subset[0,T]$ and
$(K_j)_{j=1}^n$ such that $K_j\subset D_{S_j}$ is a compact set for each
$j=1,\ldots,n$ and $K\subset \cup_{j=1}^nK_j\times[S_j,T]$. Apply now
Proposition~\ref{PR_uniformity} with $K_j$ and $I_j:=[S_j,T]$ substituted for
$K$ and $I$, respectively. Further, extend the function $k_{K_j,I_j}$ by zero
to $[0,S_j)$ and define $k_K^T:=\sum_{j=1}^nk_{K_j,I_j}\in
L^d\big([0,T],\Real\big)$. Then, for any $(z,s)\in K$ and any $u,t\in[s,T]$
with~${u\le t}$,
$$%
|\varphi_{s,t}(z)-\varphi_{s,u}(z)|\le \int_u^t k^T_K(\xi)\,d\xi.
$$%
Take any increasing sequence $(T_n)$ such that $T_n>\max\pr_{\mathbb R}(K)$ for
all $n\in\Natural$ and $T_n\to+\infty$ as $n\to+\infty$. Now we finish the
proof of assertion~(iv) of~Proposition~\ref{PR_sol_semicomplete} by setting
$\tilde
k_K:=k_K^{T_1}\chi_{[0,T_1)}+\sum_{n=2}^{+\infty}k_K^{T_n}\chi_{[T_{n-1},T_n)}$.

Finally, the fact that $G$ is essentially unique follows from statement~(iii) of the theorem with
$s:=t$. The proof is finished.
\end{proof}

\subsection{Semicomplete weak holomorphic vector fields in non-degenerate case}\label{SS_charact}
In this section we are going to give a precise constructive description of
semicomplete weak holomorphic vector fields in $\mathcal
D:=\{(z,t):\,t\ge0,\,z\in D_t\}$  for the case when the canonical domain
system~$(D_t)$ is non-degenerate, i.\,e., when $r(t)>0$ for all $t\ge0$.

In order to state the main result of the section we need some notation. First of all, for $r>0$ we
denote by $\mathcal K_r:\mathbb A_r\to\Complex$ the so called {\it Villat kernel}, which can be
defined by the following formula (see, e.\,g.,~\cite{GoluzinM} or~\cite[\S\,V.1]{Aleksandrov}):
\begin{equation}\label{EQ_Villat_kernel}
\mathcal
K_r(z):=\lim\limits_{n\to+\infty}\sum\limits_{\nu=-n}^n\frac{1+r^{2\nu}z}{1-r^{2\nu}z}=\frac{1+z}{1-z}+\sum_{\nu=1}^{+\infty}
\left(\frac{1+r^{2\nu}z}{1-r^{2\nu}z}+\frac{1+z/r^{2\nu}}{1-z/r^{2\nu}}\right).
\end{equation}
The Villat kernel plays the same role for the Function Theory in the annulus as the Schwartz kernel
$\mathcal K_0(z):=(1+z)/(1-z)$ in the unit disk. Namely, for any function $f\in\Hol(\mathbb
A_r,\Complex)$ which is continuous in $\overline{\mathbb A_r}$, see
e.g.\,\cite[Theorem~2.2.10]{Vaitsiakhovich},
\begin{multline}\label{EQ_theVillatFormula}
f(z)=\int_\UC\mathcal K_r(z\xi^{-1})\Re f(\xi)\,\frac{|d\xi|}{2\pi}+\int_\UC \big[\mathcal
K_r(r\xi/z)-1\big]\Re f(r\xi)\,\frac{|d\xi|}{2\pi}\\+i\int_{\UC}\Im
f(\rho\xi)\frac{|d\xi|}{2\pi}\quad \text{for all } z\in\mathbb A_r,~\rho\in[r,1].
\end{multline}
\REM{
\begin{remark}\label{RM_Villat_analytic}
For each fixed $r\in(0,1)$ series~\eqref{EQ_Villat_kernel} converges locally uniformly in
$\Complex^*\setminus\{r^{2\nu}:\nu\in\mathbb Z\}$. It follows that $\mathcal K_r$ is holomorphic in
$\Complex^*$ except the poles at the points ${z_\nu:=r^{2\nu}}$, $\nu\in\mathbb Z$. In a similar
way we see that for a fixed $z\in\Complex$ the function $r\mapsto\mathcal K_r(z)$ is holomorphic
aside the set $\{0\}\cup\{r:r^{2\nu}=z\text{~for~some~$\nu\in\mathbb Z$}\}$, with $r=0$ being its
removable singularity.
\end{remark}
}

\begin{remark}\label{RM_VillatKernel}
Fix any $r\in(0,1)$ and $\rho\in(r,1)$. It is known (see, e.\,g., \cite{GoluzinM} or
\cite[\S\,V.1]{Aleksandrov}) that
\begin{align*}
\max_{\theta\in\Real}\Re \mathcal K_{r}(e^{i\theta}\rho)=\mathcal K_r(\rho)>1,&&
\min_{\theta\in\Real}\Re \mathcal K_{r}(e^{i\theta}\rho)=\mathcal K_r(-\rho)\in(0,1).
\end{align*}
Moreover, $\mathcal K_r(x)$ is increasing on $[-1,-r]$ and on $[r,1)$ with $\mathcal K_r(\pm r)=1$
and $\mathcal K_r(-1)=0$.
\end{remark}

\begin{definition}
Let $r\in(0,1)$. By {\it the class $\ParClass_r$} we will mean the collection of all functions
$p\in\Hol(\mathbb A_r,\Complex)$ having the following integral representation
\begin{equation}\label{EQ_represV}
p(z)=\int_\UC\mathcal K_r(z/\xi)d\mu_1(\xi)+\int_\UC\big[1-\mathcal
K_r(r\xi/z)\big]d\mu_2(\xi),\quad z\in\mathbb A_r,
\end{equation}
where $\mu_1$ and $\mu_2$ are positive Borel measures on the unit circle~$\UC$ subject to the
condition~$\mu_1(\UC)+\mu_2(\UC)=1$.
\end{definition}

\begin{remark}\label{RM_mu1mu2-unique}
>From the proof of~\cite[Theorem~1]{Zmorovich} it is evident that given
$p\in\ParClass_r$, the measures $\mu_1$ and $\mu_2$ in
representation~\eqref{EQ_represV} are unique.
\end{remark}

Let $F\in\Hol(\mathbb A_{r},\Complex)$ for some $r\in(0,1)$. Denote by $\mathcal N(F)$ the free
term in the Laurent expansion of~$F$,
$$
\mathcal N(F):=\int_\UC F(\rho\xi)\frac{|d\xi|}{2\pi},\quad \rho\in(r,1).
$$

\begin{remark}\label{RM_N}
Since $\mathcal N(\mathcal K_r)$=1, we have $\mathcal N(p)=\mu_1(\UC)\ge0$ for any $p\in
\ParClass_{r}$.
\end{remark}

Now we can formulate the main result of this section.
\begin{theorem}\label{TH_semi-char-non-deg}
Let $d\in[1,+\infty]$ and let $\big(D_t\big)=\big(\mathbb A_{r(t)}\big)$ be a
non-degenerate canonical domain system of order~$d$. Then a function
$G:\mathcal D\to\Complex$, where $\mathcal D:=\{(z,t):\,t\ge0,\,z\in D_t\}$, is
a semicomplete weak holomorphic vector field of order~$d$ if and only if there
exist functions $p:\mathcal D\to\Complex$ and $C:[0,+\infty)\to\Real$ such
that:
\begin{mylist}
\item[(i)] $G(w,t)=w\big[iC(t)+
r'(t)p(w,t)/r(t)\big]$ for a.e. $t\ge0$ and all $w\in D_t$;
\item[(ii)] for each $w\in D:=\cup_{t\ge0} D_t$ the function $p(w,\cdot)$ is measurable in
$E_w:=\{t\ge0:\,(w,t)\in\mathcal D\}$;
\item[(iii)]  for each $t\ge0$ the function $p(\cdot\,,t)$ belongs to the class $\ParClass_{r(t)}$;
\item[(iv)] $C\in L^d_{\mathrm{loc}}\big([0,+\infty),\Real\big)$.
\end{mylist}
\end{theorem}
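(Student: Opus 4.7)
The plan is to prove both implications by lifting to the simply connected strip $\mathbb{S}$ via the covering maps $W_t(\zeta)=r(t)^\zeta$ from Theorem~\ref{TH_lifting_evolut_fam}, where the Berkson--Porta representation for simply connected domains is available through Theorem~\ref{TH_SC-EF-Diff}.

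For sufficiency I would define $G$ by~(i) from the given data $p,C$ and verify the semicomplete weak holomorphic vector field properties. Conditions WHVF1 and WHVF2 are immediate from the integral representation of $p\in\ParClass_{r(t)}$, while the WHVF3 bound on a compact set $K\subset\mathcal{D}$ would follow from uniform bounds on $\mathcal{K}_r(z/\xi)$ and $1-\mathcal{K}_r(r\xi/z)$ for $z$ in compact subsets of $\mathbb{A}_r$ and $\xi\in\UC$, combined with $r',C\in L^d_{\mathrm{loc}}$. To establish semicompleteness I would verify that any solution of $\dot{w}=G(w,t)$ starting at $z\in D_s$ stays in $\mathbb{A}_{r(t)}$ for all $t\ge s$. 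Along a solution,
\[
\frac{d}{dt}\log|w(t)|=\Re\frac{G(w,t)}{w}=\frac{r'(t)}{r(t)}\Re p(w(t),t).
\]
The boundary behavior from Remark~\ref{RM_VillatKernel} gives $\Re p\ge 0$ throughout, with $\Re p=0$ a.e.\ on $|w|=1$ and $\Re p\ge 1$ a.e.\ on $|w|=r$; together with $r'/r\le 0$ this yields $\tfrac{d}{dt}\log|w|\le 0$ and $\tfrac{d}{dt}[\log|w|-\log r]\ge 0$, forcing $w(t)\in\mathbb{A}_{r(t)}$ for all $t\ge s$. Theorem~\ref{TH_CarODETheory}\,-(iii) then excludes any finite breakdown time.

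For necessity I would first apply Theorem~\ref{TH_EF<->sWHVF}\,(B) to produce the $L^d$-evolution family $(\varphi_{s,t})$ generated by $G$, then invoke Theorem~\ref{TH_lifting_evolut_fam} to lift to an $L^d$-evolution family $(\Psi_{s,t})$ in $\mathbb{S}$ satisfying $W_t\circ\Psi_{s,t}=\varphi_{s,t}\circ W_s$. Theorem~\ref{TH_SC-EF-Diff} applied to the simply connected $\mathbb{S}$ then supplies a Herglotz vector field $\widetilde{G}$ of order $d$ generating $(\Psi_{s,t})$. Differentiating the conjugation identity at $s=t$ (using $\Psi_{t,t}=\id_{\mathbb{S}}$) and substituting $w=W_t(\zeta)$ produces the key relation
\[
G(w,t)=w\left[\frac{r'(t)}{r(t)}\,\zeta+\log r(t)\cdot\widetilde{G}(\zeta,t)\right],\qquad \zeta=\frac{\log w}{\log r(t)},
\]
valid for a.e.\ $t\ge 0$. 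Since $(\Psi_{s,t})$ lifts $(\varphi_{s,t})$ through the coverings $W_\tau$, which have pure imaginary period $T_\tau:=2\pi i/\log r(\tau)$, it satisfies $\Psi_{s,t}(\zeta+T_s)=\Psi_{s,t}(\zeta)+T_t$; differentiating in $t$ at $s=t$ yields $\widetilde{G}(\zeta+T_t,t)=\widetilde{G}(\zeta,t)+T_t'$, which splits $\widetilde{G}$ as the linear piece $-\tfrac{r'(t)}{r(t)\log r(t)}\zeta$ plus a $T_t$-periodic remainder $\beta(\zeta,t)$. Substitution back cancels the $\zeta$-linear terms and leaves $G(w,t)/w=\log r(t)\cdot B(w,t)$, where $B$ is the well-defined holomorphic function on $\mathbb{A}_{r(t)}$ obtained by descending $\beta$ through $W_t$.

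To complete necessity I would extract $C$ and $p$ from $B$. The Berkson--Porta representation of the infinitesimal generator $\widetilde{G}(\cdot,t)$ in $\mathbb{S}$ expresses it via positive boundary measures on the two lines $\{\Re\zeta=0\}$ and $\{\Re\zeta=1\}$ together with an imaginary constant; the $T_t$-periodicity of $\beta$ forces these measures to be invariant under $T_t$-translation and hence to descend to finite positive measures $\mu_1$ and $\mu_2$ on the circle~$\UC$, identified respectively with the outer and inner boundary circles of $\mathbb{A}_{r(t)}$. Periodizing the strip Schwartz kernel over the period $T_t$ then matches, term by term, the series in~\eqref{EQ_Villat_kernel} defining $\mathcal{K}_r$, so that $B$ assumes the Villat integral form; the free term of the Berkson--Porta data contributes the $iC(t)$ piece, with the constant adjusted so that $\mu_1(\UC)+\mu_2(\UC)=1$. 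Measurability of $p(w,\cdot)$ and the $L^d_{\mathrm{loc}}$-regularity of $C$ are inherited from $\widetilde{G}$ being a Herglotz vector field of order $d$, while essential uniqueness was already established in Theorem~\ref{TH_EF<->sWHVF}\,(A)\,(iii). The main obstacle I anticipate is precisely in this last step: identifying the Villat kernel as the $T_t$-periodization of the strip Schwartz kernel and tracking the normalization $\mu_1(\UC)+\mu_2(\UC)=1$, both of which require careful bookkeeping of the conformal identifications between the quotients of the two boundary components of $\mathbb{S}$ and the circles $\UC$, $r(t)\UC$.
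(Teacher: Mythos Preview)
Your sufficiency argument has a genuine gap: the claim ``$\Re p\ge 0$ throughout'' is false for $p\in\ParClass_r$. Take $\mu_1=0$, $\mu_2=\delta_1$ in~\eqref{EQ_represV}; then $p(z)=1-\mathcal K_r(r/z)$, and for real $z=\rho\in(r,1)$ one has $\Re p(\rho)=1-\mathcal K_r(r/\rho)<0$ by Remark~\ref{RM_VillatKernel}. (Dually, with $\mu_2=0$, $\mu_1=\delta_1$ one gets $\Re p=\mathcal K_r>1$ on the positive real axis, so $\Re p\le 1$ also fails.) Hence neither of your monotonicity inequalities $\tfrac{d}{dt}\log|w|\le 0$ and $\tfrac{d}{dt}[\log|w|-\log r]\ge 0$ holds along trajectories, and the argument does not prevent $|w(t)|$ from reaching~$1$ or~$r(t)$. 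What is true is only a \emph{boundary} statement: $\Re p\to 0$ a.e.\ on $|z|=1$ and $\Re p\to 1$ a.e.\ on $|z|=r$; in the interior the sign is not controlled. The paper's proof (Lemma~\ref{LM_clV_est}) replaces the sign condition by the quantitative one-sided bound
\[
-\Re p(z)\le \big(\mathcal K_r(r/|z|)-1\big)\mu_2(\UC)\le \frac{4r(1-|z|)}{(|z|-r)(1-r)^2},
\]
which shows the ``wrong-sign'' contribution vanishes like $(1-|z|)$ near $|z|=1$; a comparison with an explicit supersolution then keeps $|w(t)|$ bounded away from~$1$, and the duality $\tilde p(z,t):=1-p(r(t)/z,t)\in\ParClass_{r(t)}$ (Remark~\ref{RM_p-tilde}) handles the inner boundary symmetrically.

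For necessity your overall plan---lift via Theorem~\ref{TH_lifting_evolut_fam}, differentiate the conjugation, apply Berkson--Porta in~$\mathbb S$---matches the paper's Proposition~\ref{PR_VF} and equation~\eqref{EQ_Gt}. But the last step is not as you describe: Berkson--Porta in~$\mathbb S$ does \emph{not} express an infinitesimal generator as an integral against positive measures on the two boundary lines; it gives $\widetilde G_t(w)=\sin\tfrac{\pi}{2}(w_0-w)\sin\tfrac{\pi}{2}(\bar w_0+w)\,\tilde p_t(w)$ (or the degenerate form~\eqref{EQ_tildaGno}) with $\Re\tilde p_t\ge 0$, and it is the holomorphic function $G_t(z)/z$ on the annulus---not the generator itself---for which a Villat--Stieltjes representation with \emph{signed} measures is obtained (via Zmorovich's theorem, using the $L^1$ boundary bounds of Lemma~\ref{LM_posREstrip}). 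The positivity of $\mu_1$ and of $m-\nu_2/\alpha$ is then extracted from the sign of $\Re\kappa$ in Lemma~\ref{LM_represent_p}, not from any translation invariance of boundary measures. The case $\tau=\pm1$, where the periodization idea breaks down completely, is handled separately via Lemma~\ref{LM_fromJulia} and forces $r'(t)=0$, $G_t(z)=iC_t z$.
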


The following proposition forms a main block for the proof of sufficiency in
Theorem~\ref{TH_semi-char-non-deg}.
\begin{proposition}\label{PR_VF}
Let $\big(D_t\big)=\big(\mathbb A_{r(t)}\big)$ be a canonical domain system of
order~$d\in[1,+\infty]$ and~$G$ a semicomplete weak holomorphic vector field
in~$\mathcal D:=\{(z,t):\,t\ge0,\,z\in D_t\}$ of the same order. Then, for
a.e.~$t\in E:=\{t\ge0:\,r(t)>0\}$, the function
$G_t:=G(\cdot,t):D_t\to\Complex$ admits the following representation:
\begin{equation}\label{EQ_GtRepresent}
G_{t}(z) = z\left(\frac{r'(t)}{r(t)}p_t(z)+iC_t\right),\quad C_t\in\Real,~ p_t\in\ParClass_{r(t)}.
\end{equation}
\end{proposition}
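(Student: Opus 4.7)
The plan is to derive the representation of $G_s$ from the infinitesimal behavior of the evolution family $(\varphi_{s,t})$ generated by $G$ via Theorem~\ref{TH_EF<->sWHVF}(B), and to read off the $\ParClass_{r(s)}$ structure through the Villat formula~\eqref{EQ_theVillatFormula}. First I would fix $s>0$ with $r(s)>0$ such that $r$ is differentiable at $s$ and $t\mapsto\varphi_{s,t}\in\Hol(D_s,\Complex)$ is differentiable from the right at $s$ with derivative $G_s$; by Theorem~\ref{TH_EF<->sWHVF}(A) and Definition~\ref{def-cansys} this holds for a.e.\ such $s$. By Lemma~\ref{LM_evol-fam-classM} each $\varphi_{s,t}$ is univalent and lies in $\classM(r(s),r(t))$, so the function
$$F_t(z):=\log\!\big(\varphi_{s,t}(z)/z\big),\qquad z\in\mathbb A_{r(s)},$$
is single-valued holomorphic in $\mathbb A_{r(s)}$ (fixing the branch by $F_s\equiv 0$ and continuity in $t$), and the quotient $h_t:=F_t/(t-s)$ converges to $G_s(\cdot)/(\cdot)$ locally uniformly in $\mathbb A_{r(s)}$ as $t\to s^+$.

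Next I would extract boundary information. Since $\varphi_{s,t}(\mathbb A_{r(s)})\subset\mathbb A_{r(t)}$, the bounded harmonic function $\Re F_t(z)=\log|\varphi_{s,t}(z)/z|$ has non-tangential Fatou boundary values satisfying $(\Re F_t)^*\le 0$ a.e.\ on $\UC$ and $(\Re F_t)^*\ge\log(r(t)/r(s))$ a.e.\ on $\{|z|=r(s)\}$. Dividing by $t-s>0$ and letting $t\to s^+$, the Poisson representation of bounded harmonic functions in the annulus propagates these inequalities to the limit, giving
$$\big(\Re(G_s/z)\big)^*\le 0\ \text{a.e.\ on}\ \UC,\qquad\big(\Re(G_s/z)\big)^*\ge r'(s)/r(s)\ \text{a.e.\ on}\ \{|z|=r(s)\}.$$
In the sub-case $r'(s)<0$, set $\lambda:=r'(s)/r(s)<0$, $C_s:=\Im\mathcal N(G_s/z)\in\Real$ and $p_s:=(G_s/z-iC_s)/\lambda$; then $p_s$ is single-valued holomorphic in $\mathbb A_{r(s)}$ with boundary bounds $\Re p_s\ge 0$ a.e.\ on $\UC$ and $\Re p_s\le 1$ a.e.\ on $\{|z|=r(s)\}$. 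Applying the Villat formula~\eqref{EQ_theVillatFormula} to $p_s$ on shrinking closed subannuli and passing to the Fatou limit, one obtains
$$p_s(z)=\int_\UC\mathcal K_{r(s)}(z/\xi)\,d\mu_1(\xi)+\int_\UC\big[1-\mathcal K_{r(s)}(r(s)\xi/z)\big]\,d\mu_2(\xi),$$
where $d\mu_1:=(\Re p_s)^*(\xi)\tfrac{d\theta}{2\pi}\ge 0$ and $d\mu_2:=\bigl(1-(\Re p_s)^*(r(s)\xi)\bigr)\tfrac{d\theta}{2\pi}\ge 0$. Because $p_s$ is single-valued, the circle averages of $\Re p_s$ all equal $\Re\mathcal N(p_s)$, and this forces $\mu_1(\UC)+\mu_2(\UC)=1$ automatically; hence $p_s\in\ParClass_{r(s)}$ and $G_s(z)/z=\lambda p_s(z)+iC_s$, which is~\eqref{EQ_GtRepresent}.

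The sub-case $r'(s)=0$ is handled separately: the limiting inequalities become $(\Re(G_s/z))^*\le 0$ on the outer circle and $\ge 0$ on the inner, and since single-valuedness of $G_s/z$ makes the circle average of $\Re(G_s/z)$ constant across all circles, the two opposite-sign bounds force those averages to vanish; combined with the pointwise sign this gives $(\Re(G_s/z))^*\equiv 0$ a.e.\ on both circles, hence $\Re(G_s/z)\equiv 0$ by uniqueness of bounded harmonic functions from their boundary data, and $G_s(z)=iC_s z$. The main obstacle will be the boundary-value analysis: since $\varphi_{s,t}$ need not extend continuously to $\partial\mathbb A_{r(s)}$, the Villat formula applies to $p_s$ only after approximation on shrinking closed subannuli, and both the Fatou limits for the representing measures and the control of $\Re F_t/(t-s)$ near $\partial\mathbb A_{r(s)}$ as $t\to s^+$ have to be handled via the standard (but technical) Poisson-type boundary theory on the annulus.
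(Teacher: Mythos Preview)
Your strategy is genuinely different from the paper's and, in spirit, more direct: you stay in the annulus and try to read off the sign of the boundary data of $G_s(\cdot)/(\cdot)$ from the inclusion $\varphi_{s,t}(\mathbb A_{r(s)})\subset\mathbb A_{r(t)}$, whereas the paper lifts everything to the strip~$\mathbb S$ (Theorem~\ref{TH_lifting_evolut_fam}), applies the Berkson\,--\,Porta representation to the lifted generator~$\tilde G_t$, and then invokes Lemma~\ref{LM_represent_p} to pass from the strip back to a Villat\,--\,Stieltjes representation.

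However, the proposal has a real gap at the limiting step. You assert that $\big(\Re(G_s/z)\big)^*$ exists and satisfies the displayed inequalities on $\partial\mathbb A_{r(s)}$, and you then define $d\mu_1=(\Re p_s)^*\,\tfrac{d\theta}{2\pi}$ and $d\mu_2=\big(1-(\Re p_s)^*\big)\tfrac{d\theta}{2\pi}$. Neither is justified. Local uniform convergence $h_t\to\Re(G_s/z)$ on compacta of $\mathbb A_{r(s)}$ does not give boundedness of the limit, and indeed functions in $\ParClass_{r(s)}$ are typically unbounded near the boundary (think of $\mathcal K_{r(s)}(z/\xi)$ with $\xi\in\UC$), so Fatou limits of $\Re(G_s/z)$ need not exist and, even when they do, the representing measures $\mu_1,\mu_2$ need not be absolutely continuous. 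What you have called ``standard (but technical)'' is precisely the missing ingredient.

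The route can be repaired, but not by Fatou limits of $p_s$. Work instead with the boundary measures of the approximants $h_t=\Re F_t/(t-s)$. Each $h_t$ is bounded harmonic, hence has Fatou data $h_t^*$ with $h_t^*\le0$ on $\UC$ and $h_t^*\ge c_t:=\log(r(t)/r(s))/(t-s)$ on $r(s)\UC$. Since $F_t$ is single-valued, the circle averages of $h_t$ are the constant $N_t$, which converges to $\Re\,\mathcal N(G_s/z)$; therefore $\int_\UC|h_t^*|\,|d\xi|=-2\pi N_t$ and $\int_{r(s)\UC}|h_t^*|\,|d\xi|\le 2\pi(N_t-2c_t)$ stay bounded as $t\to s^+$. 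By convexity of circle means of the subharmonic function $|h_t|$ in $\log\rho$, this gives a bound $\int_\UC|\Re(G_s/z)(\rho\xi)|\,|d\xi|\le M$ uniform in $\rho\in(r(s),1)$; that is exactly the hypothesis~\eqref{EQ_bound} of the Villat\,--\,Stieltjes representation of Zmorovich used in Lemma~\ref{LM_represent_p}. A Helly selection then produces signed measures $\nu_1,\nu_2$ for $G_s/z-iC_s$, and the sign information on $h_t^*$ passes to the weak-$*$ limits to yield $\nu_1\le0$ and $\nu_2\ge\lambda\,m$ with $\lambda=r'(s)/r(s)$, from which $\mu_1:=\nu_1/\lambda\ge0$, $\mu_2:=m-\nu_2/\lambda\ge0$ and $\mu_1(\UC)+\mu_2(\UC)=1$ follow as in the last paragraph of the proof of Lemma~\ref{LM_represent_p}. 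The case $r'(s)=0$ is handled by the same compactness: $\nu_1\le0$, $\nu_2\ge0$, $\nu_1(\UC)=\nu_2(\UC)=0$, hence both vanish. In short, replace the Fatou-boundary claims for $p_s$ by a Helly argument on the boundary data of $h_t$, and your outline becomes a valid alternative to the paper's strip-lifting proof.
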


\begin{remark}\label{RM_p-tilde}
Denote by $p[r,\mu_1,\mu_2]$ the function defined by representation~\eqref{EQ_represV} and let
$\widehat{\mu}$ stand for the push-forward of a Borel measure~$\mu$ on~$\UC$ w.r.t. the map
$z\mapsto \bar z$. Let $r\in(0,1)$ and $p\in\ParClass_r$. By definition, $p=p[r,\mu_1,\mu_2]$ for
some positive Borel measures $\mu_1$ and $\mu_2$ on~$\UC$ subject to the condition
$\mu_1(\UC)+\mu_2(\UC)=1$. It is easy to see that the function $\tilde p(z):=1-p(r/z)$,
$z\in\mathbb A_r$, also belongs to $\ParClass_r$ and $\tilde
p=p[r,\widehat{\mu}_2,\widehat{\mu}_1]$.
\end{remark}

Denote by $\CarClass$ the Carath\'eodory class of all functions $q\in\Hol(\UD,\Complex)$ normalized
by the condition $q(0)=1$ and satisfying for all $z\in\mathbb D$ the inequality $\Re q(z)\ge0$.
This class has many nice properties, one of which can be formulated as follows.
\begin{remark}\label{RM_convex}
The class~$\CarClass$ is a compact convex subset of~$\Hol(\UD,\Complex)$. Moreover, for any
continuous convex functional  $L:\CarClass\to \Real$,
$$
\max_{q\in\CarClass} L(q)=\max_{\theta\in\Real}L(\mathcal K_0^\theta),\quad
\text{where~}\mathcal K_0^\theta(z):=\mathcal K_0(ze^{-i\theta}).
$$

Indeed, to prove the above statement one only has to apply the Krein~--~Milman theorem (see,
e.\,g., \cite[p.\,181]{Pommerenke}) and take into account that the set of all extremal points
of~$\CarClass$ coincides with $\{\mathcal K_0^\theta:\theta\in\Real\}$ (see, e.\,g.,
\cite{Holland}).
\end{remark}

Our proof of Proposition~\ref{PR_VF} takes advantage of the following lemmas. Recall the notation
$\mathbb S:=\{w:\,0<\Re w<1\}$.

\begin{lemma}\label{LM_posREstrip}
Let $p\in\Hol(\mathbb S,\Complex)$. Suppose that $\Re p(w)\ge0$ for all $w\in\mathbb S$. Then for
any $a,b\in\Real$, $a<b$,
\begin{mylist}
\item[(i)]there exists $C_1(a,b,p)>0$ such that
\begin{equation}\label{EQ_rere}
L_1(p,u):=\int_a^b\Re p(u+iv)\,dv\le C_1(a,b,p),\quad \text{\rm for
all~~}u\in(0,1).
\end{equation}
\item[(ii)] there exists $C_2(a,b,p)>0$ such that
\begin{equation}\label{EQ_imim}
L_2(p,u):=\int_a^b|\Im p(u+iv)|\,dv\le-C_2(a,b,p)\log[u(1-u)],\quad \text{\rm
for all~~}u\in(0,1).
\end{equation}
\end{mylist}
\end{lemma}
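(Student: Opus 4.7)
The strategy is to reduce to the unit disk by the biholomorphism $\psi \colon \mathbb{S} \to \UD$, $\psi(w) := (e^{i\pi w} - i)/(e^{i\pi w} + i)$, pull back the Riesz--Herglotz representation of $g := p \circ \psi^{-1}$ on~$\UD$, and work with a concrete Herglotz-type representation of $p$ directly on the strip. The hypothesis $\Re p \geq 0$ gives $\Re g \geq 0$, hence $g(z) = ic + \int_{\partial\UD} (\xi+z)/(\xi-z)\,d\nu(\xi)$ for some $c\in\Real$ and finite positive Borel measure $\nu$ on $\partial\UD$. Under $\psi$ the boundary $\partial\UD$ splits as two arcs meeting at $\pm 1 = \psi(\mp i\infty)$; separating any atoms of $\nu$ at $\pm 1$ and pulling back to~$\mathbb{S}$ produces the representation
\[
p(w) = ic + \alpha_{1}(-ie^{i\pi w}) + \alpha_{-1}(ie^{-i\pi w}) + \int_{\Real} K_0(w,s)\,d\mu_0(s) + \int_{\Real} K_1(w,s)\,d\mu_1(s),
\]
with $\alpha_{\pm 1}\geq 0$, finite positive Borel measures $\mu_0, \mu_1$ on $\Real$, and the standard strip Herglotz kernels
\[
K_0(w,s) = \tfrac12\cot(\pi(w-is)/2), \qquad K_1(w,s) = -\tfrac12\cot(\pi(w-1-is)/2),
\]
whose real parts are the strip Poisson kernels $P_0(u+iv,is)=\sin\pi u/(2(\cosh\pi(v-s)-\cos\pi u))$ and $P_1(u+iv,1+is)=\sin\pi u/(2(\cosh\pi(v-s)+\cos\pi u))$.

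The proof of (i) reduces by Fubini to the uniform estimate $\int_a^b P_j(u+iv,j+is)\,dv \leq 1$ for $j\in\{0,1\}$, $s\in\Real$, $u\in(0,1)$, which follows from the explicit antiderivative
\[
\int \frac{dv}{\cosh\pi(v-s)-\cos\pi u} = \frac{2}{\pi\sin\pi u}\,\arctan\frac{\tanh(\pi(v-s)/2)}{\tan(\pi u/2)}
\]
(and its analog with $+\cos\pi u$): the $\sin\pi u$ factor in $P_j$ cancels the one in the antiderivative, leaving an arctangent difference bounded by $\pi$. The exponential contributions from $\alpha_{\pm 1}$ compute to $\alpha_{\pm 1}\sin\pi u\,(e^{\mp\pi a}-e^{\mp\pi b})/\pi$ and are trivially bounded in $u$. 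Collecting these gives $L_1(p,u)\leq C_1(a,b,p)$.

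For (ii) the analogous reduction leads to the integrals $J_j(u,s) := \int_a^b |\Im K_j(u+iv,j+is)|\,dv$, where $\Im K_0(u+iv,s) = -\sinh\pi(v-s)/(2(\cosh\pi(v-s)-\cos\pi u))$ and $\Im K_1$ has the opposite sign with $+\cos\pi u$. Here the main obstacle is that $J_j$ genuinely blows up as $u\to 0^+$ (for $j=0$) or $u\to 1^-$ (for $j=1$), so a uniform pointwise bound on the integrand does not suffice. The antiderivative of the signed integrand is now a logarithm, and a direct case analysis (splitting at $v=s$ when $s\in[a,b]$) gives
\[
J_0(u,s) \leq \frac{1}{2\pi}\log\frac{(\cosh\pi(s-a)-\cos\pi u)(\cosh\pi(b-s)-\cos\pi u)}{(1-\cos\pi u)^{2}} + O(1)
\]
for $s\in[a,b]$, and a bounded expression for $s\notin[a,b]$. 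Because $1-\cos\pi u = 2\sin^2(\pi u/2) \asymp u^2$ as $u\to 0^+$, we obtain $J_0(u,s)\leq C(a,b)\log(1/u)$ uniformly in $s\in\Real$; symmetrically, $1+\cos\pi u \asymp (1-u)^2$ yields $J_1(u,s)\leq C(a,b)\log(1/(1-u))$. Integrating against the finite measures $\mu_0, \mu_1$ and adding the (bounded) constant and exponential contributions produces $L_2(p,u)\leq -C_2(a,b,p)\log[u(1-u)]$, completing (ii). The logarithmic blow-up of $J_j$ reflects the classical phenomenon that the harmonic conjugate of a non-negative bounded harmonic function on the strip may diverge logarithmically near the boundary.
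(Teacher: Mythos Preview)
Your approach is correct and genuinely different from the paper's. The paper first normalizes $p(1/2)=1$, transfers to the Carath\'eodory class in~$\UD$ via the conformal map~$F$ of~\eqref{EQ_DtoS}, and then uses the Krein--Milman theorem to reduce the convex functionals $L_1,L_2$ to the extremal functions $\mathcal K_0^\theta$; the remaining estimates are carried out in disk coordinates, combining geometric bounds on~$F$ (relating $|z|$ on the arcs $I(u)$ to $u(1-u)$) with the explicit values $\int_0^{2\pi}\Re\mathcal K_0(\rho e^{i\alpha})\,d\alpha=2\pi$ and $\int_0^{2\pi}|\Im\mathcal K_0(\rho e^{i\alpha})|\,d\alpha=4\log\frac{1+\rho}{1-\rho}$. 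You instead pull back the full Riesz--Herglotz measure to the strip, obtaining an integral representation of~$p$ against the explicit cotangent kernels $K_0,K_1$ (plus exponential terms from atoms at~$\pm1$), and then prove the bounds by Fubini and closed-form antiderivatives in~$v$. Your route is more self-contained and computational, avoiding both the extremal-point reduction and the geometric estimates on~$F$; the paper's route is shorter once one is willing to invoke Krein--Milman and the disk Poisson-kernel identities.

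One small imprecision: for $s$ just outside $[a,b]$ the quantity $J_0(u,s)$ is \emph{not} bounded uniformly in~$u$ (e.g.\ for $s<a$ one gets $\tfrac1{2\pi}\log\frac{\cosh\pi(b-s)-\cos\pi u}{\cosh\pi(a-s)-\cos\pi u}$, and when $a-s$ is small the denominator is again comparable to $1-\cos\pi u$). This does not affect your conclusion, since in that regime the same estimate $J_0(u,s)\le C(a,b)\big(1+\log(1/u)\big)$ holds by bounding the denominator below by $1-\cos\pi u$ and the numerator above by $\cosh\pi(b-a+1)+1$; you should phrase the case split accordingly.
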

\begin{proof}
Fix any $a,b\in\Real$, $a>b$. Note that
$$L_1(p,u)=\Re p(1/2) L_1(p_0,u)\hbox{~~~~and~~~~}L_2(p,u)\le
(b-a)|\Im p(1/2)|+\Re p(1/2) L_2(p_0,u)$$ for any $u\in(0,1)$, where $p_0(w):=\big(p(w)-i\Im
p(1/2)\big)/\Re p(1/2)$ if $\Re p(1/2)>0$ and $p_0\equiv 1$ if $\Re p(1/2)=0$.

Therefore, it is sufficient to prove the lemma only for functions $p\in\Hol(\mathbb S,\Complex)$
satisfying $\Re p(w)\ge0$ for all $w\in\mathbb S$ and normalized by~$p(1/2)=1$. So, further on we
suppose that $p$ fulfills these conditions.

Let $F$ stand for the conformal mapping of the unit disk~$\UD$ onto the
strip~$\mathbb S$ normalized by $F(0)=1/2$, $iF'(0)>0$, namely
\begin{equation}\label{EQ_DtoS}
F(z)=\frac{1}{\pi  i}\,\log\left(i\text{  }\frac{1+z}{1-z}\right).
\end{equation}
Consider the function~$q:=p\circ F$. It belongs to the Carath\'eodory
class~$\CarClass$. For each fixed $u\in(0,1)$ the functionals $q\mapsto
L_1^\UD(q):=L_1(q\circ F^{-1},u)$ and $q\mapsto L_2^\UD(q):= L_2(q\circ
F^{-1},u)$ are convex on the class~$\CarClass$. Therefore, according to
Remark~\ref{RM_convex} we can restrict ourselves to the case $q=\mathcal
K_0^\theta$, $\theta\in\Real$. In this case $p=p_\theta:=\mathcal
K_0^\theta\circ F^{-1}$.

Let us fix $\varepsilon\in(0,1/2)$. Since the functions $u\mapsto L_j(p,u)$,
$j=1,2$, are continuous for~$u\in(0,1)$, it is sufficient to prove~(i) and (ii)
for all $u\in U_\varepsilon:=(0,1/2-\varepsilon)\cup(1/2+\varepsilon,1)$.
Denote $I(u):=F^{-1}\big(\{w=u+iv:v\in[a,b]\}\big)$. The function $F$ extends
holomorphically to the closed unit disk minus~$\{\pm1\}$. For any $u\in
U_\varepsilon$ the preimage of the straight line $\{u+iv:v\in\Real\}$ under $F$
is a circular arc joining points $\pm1$, while the preimages of the segments
$[ia,1+ia]$ and $[ib,1+ib]$ are the hyperbolic geodesics symmetric w.r.t. the
real line. In particular, it follows that there exists $A(a,b,\varepsilon)>0$
such that for any $u\in U_\varepsilon$, $|dF(z)|/|d\arg z|<A(a,b,\varepsilon)$
when $z$ moves along $I(u)$. Hence,
\begin{gather}
\label{EQ_reToDsk}
L_1(p_\theta,u)\le A(a,b,\varepsilon)\int_{I(u)}\Re \mathcal K_0(ze^{-i\theta})\,|d\arg z|,\\
\label{EQ_imToDsk} L_2(p_\theta,u)\le A(a,b,\varepsilon)\int_{I(u)}|\Im
\mathcal K_0(ze^{-i\theta})|\,|d\arg z|
\end{gather}
for all $u\in U_\varepsilon$ and all $\theta\in\Real$.

Using again properties of the function~$F$, we conclude that there are positive
constants $B_1(a,b)$ and $B_2(a,b)$ not depending on $u$ such that
\begin{equation}\label{EQ_ToDsk}
\rho_1(u):=1-B_1(a,b)u(1-u) \le |z|\le 1-B_2(a,b) u(1-u)=:\rho_2(u)
\end{equation}
for all $u\in U_\varepsilon$ and all $z\in I(u)$.

 Recall that $$\mathcal P_0(\rho,\alpha):=\Re \mathcal K_0(\rho
e^{i\alpha})= \frac{1-\rho^2}{1+\rho^2-2\rho\cos\alpha}.$$ Hence, according
to~\eqref{EQ_ToDsk} the integrand in the right-hand side of~\eqref{EQ_reToDsk}
can be estimated as follows
$$
\Re \mathcal K_0(ze^{-i\theta})\le \mathcal P_0\big(\rho_2(u),-\theta+\arg
z\big)\,\frac{1-\rho_1(u)}{1-\rho_2(u)}=\mathcal P_0\big(\rho_2(u),-\theta+\arg
z\big)\,\frac{B_1(a,b)}{B_2(a,b)}
$$
for all $z\in I(u)$. It follows that the integral itself is not greater than
$$
\int_0^{2\pi} \mathcal
P_0\big(\rho_2(u),-\theta+\alpha\big)\frac{B_1(a,b)}{B_2(a,b)}\,d\alpha=2\pi\frac{B_1(a,b)}{B_2(a,b)}.
$$
This proves statement~(i).

Statement (ii) can be proved in a similar way if one notices that for each fixed $\alpha\in\Real$
the function $\rho\mapsto |\Im \mathcal K_0(\rho e^{i\alpha})|$ is non-decreasing on~$(0,1)$ and
that
$$
\int_0^{2\pi} |\Im \mathcal K_0(\rho e^{i\alpha})|\,d\alpha=4\log\frac{1+\rho}{1-\rho}.
$$
\end{proof}

\begin{lemma}\label{LM_represent_p}
Let $r\in(0,1)$ and $F\in\Hol(\mathbb A_r,\Complex)$. Suppose that there exist constants
$\alpha\ge0$, $w_0\in\overline{\mathbb S}\setminus\{\infty\}$ and a function $p\in\Hol(\mathbb
S,\Complex)$ such that for all $w\in\mathbb S$,
$$
\Re p(w)\ge0~~~\hbox{and}~~~F\big(W(w)\big)=\alpha w+
p(w)\sin\frac{\pi}{2}(w_0-w)\sin\frac{\pi}{2}(\bar w_0+w),
$$
where $W(w):=\exp(w\log r)$. Then
\begin{equation}\label{EQ_represent_p} F(z)=iC+\alpha\left[\int_\UC\mathcal
K_{r}(z\xi^{-1})d\mu_1(\xi)+\int_\UC\big[1-\mathcal K_{r}(r(t)\xi/z)\big]d\mu_2(\xi)\right],\quad
z\in\mathbb A_r,
\end{equation}
for some constant~$C\in\Real$ and positive Borel measures $\mu_1$ and $\mu_2$ on the unit
circle~$\UC$ subject to the condition~$\mu_1(\UC)+\mu_2(\UC)=1$.
\end{lemma}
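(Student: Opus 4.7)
My plan is to derive sign conditions on $\Re F$ along the two boundary circles of~$\mathbb A_r$ from the hypothesis, and then apply the Villat integral formula~\eqref{EQ_theVillatFormula} to reach the representation~\eqref{EQ_represent_p}. First, I will analyze the product $Q(w):=\sin\frac{\pi}{2}(w_0-w)\sin\frac{\pi}{2}(\bar w_0+w)$ on~$\partial\mathbb S$. Writing $w_0=a+ib$ with $a\in[0,1]$, $b\in\Real$, a direct computation using $\sin(x+iy)=\sin x\cosh y+i\cos x\sinh y$ shows that $Q$ is real-valued on $\partial\mathbb S$, with
\[
Q(iv)=\tfrac{1}{2}\bigl[\cosh\pi(b-v)-\cos\pi a\bigr]\ge 0,\qquad Q(1+iv)=-\tfrac{1}{2}\bigl[\cosh\pi(b-v)+\cos\pi a\bigr]\le 0.
\]
Combined with $\Re p\ge 0$ on~$\mathbb S$, the identity $F(W(w))=\alpha w+p(w)Q(w)$ gives $\Re F(W(iv))=\Re p(iv)\cdot Q(iv)\ge 0$ and $\Re F(W(1+iv))=\alpha+\Re p(1+iv)\cdot Q(1+iv)\le\alpha$. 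Since $W$ is a covering map of~$\mathbb S$ onto~$\mathbb A_r$ sending $\{\Re w=0\}$ to $\{|z|=1\}$ and $\{\Re w=1\}$ to $\{|z|=r\}$, this yields, in the sense of boundary limits, $\Re F\ge 0$ on~$\{|z|=1\}$ and $\Re F\le\alpha$ on~$\{|z|=r\}$.

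Next, I will apply Villat's formula~\eqref{EQ_theVillatFormula} to~$F$. Since $F$ need not extend continuously to $\partial\mathbb A_r$, I first restrict~$F$ to sub-annuli $\{r_1<|z|<r_2\}$ with $r<r_1<r_2<1$ (where $F$ is continuous up to the closure) and pass to the limit $r_1\to r^+$, $r_2\to 1^-$. The key ingredient is that since $F$ is single-valued holomorphic on~$\mathbb A_r$ (hence has no $\log|z|$-component in its harmonic decomposition), the circular means $\rho\mapsto\int_0^{2\pi}\Re F(\rho e^{i\theta})\,d\theta/(2\pi)$ are independent of~$\rho$ and equal to~$\Re c_0$, where $c_0$ is the free Laurent coefficient of~$F$. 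Together with the sign conditions of the previous paragraph this forces $0\le\Re c_0\le\alpha$ and gives uniform $L^1$-bounds on the nonnegative functions $\Re F(\rho\cdot)$ as $\rho\to 1^-$ and $\alpha-\Re F(\rho\cdot)$ as $\rho\to r^+$; weak-$*$ compactness then produces positive boundary measures $\nu_+$ on $\{|z|=1\}$ and $\alpha\lambda-\nu_-$ on $\{|z|=r\}$, where $\lambda$ is the normalized Lebesgue measure on $\UC$. Setting $\alpha\mu_1:=\nu_+$ and $\alpha\mu_2:=\alpha\lambda-\nu_-$, rewriting $[\mathcal K_r(r\xi/z)-1]=-[1-\mathcal K_r(r\xi/z)]$, and using the identity $\int_\UC \mathcal K_r(w\xi^{-1})\,|d\xi|/(2\pi)\equiv 1$ for any $w\in\mathbb A_r$ (which follows from the Laurent expansion $\mathcal K_r(z)=1+2\sum_{n\ge 1}(z^n-r^{2n}z^{-n})/(1-r^{2n})$ by inspection of the free term) to absorb the constant arising from the inner-boundary integral, I arrive at~\eqref{EQ_represent_p} with $\mu_1,\mu_2\ge 0$. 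Since $\nu_+(\UC)=\Re c_0=\nu_-(\UC)$, mean-value computations give $\alpha\mu_1(\UC)=\Re c_0$ and $\alpha\mu_2(\UC)=\alpha-\Re c_0$, hence $\mu_1(\UC)+\mu_2(\UC)=1$.

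The main obstacle is the rigorous justification of this limiting procedure in view of the possible absence of continuous boundary values for~$F$; this is resolved by the weak-$*$ compactness argument together with the $L^1$-bounds furnished by the positivity conditions and the constancy of the circular means. A subsidiary issue is the degenerate case $\alpha=0$: here $\Re c_0=0$, the $L^1$-bounds force both limiting boundary measures to vanish, and $F$ reduces to the constant~$iC$; the normalization $\mu_1(\UC)+\mu_2(\UC)=1$ is then satisfied trivially by any positive measures summing to one, since they contribute nothing to~\eqref{EQ_represent_p} once multiplied by~$\alpha=0$.
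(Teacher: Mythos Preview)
Your strategy of producing sign conditions on $\Re F$ along the two boundary circles and then invoking Villat's formula is natural, but the central step is not justified. You compute $Q(w):=\sin\frac{\pi}{2}(w_0-w)\sin\frac{\pi}{2}(\bar w_0+w)$ on the \emph{boundary} of $\mathbb S$ and correctly find that it is real with $Q(iv)\ge0$, $Q(1+iv)\le0$. From this you infer that ``$\Re F(\rho\,\cdot)$ is nonnegative as $\rho\to1^-$'' and ``$\alpha-\Re F(\rho\,\cdot)$ is nonnegative as $\rho\to r^+$''. This inference fails: the identity $F(W(w))=\alpha w+p(w)Q(w)$ is only available for $w$ in the \emph{open} strip, where $Q$ is not real. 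Writing $w=u+iv$, one has $\Im Q(u+iv)=\tfrac12\sin\pi u\,\sinh\pi(b-v)$, so
\[
\Re F\big(W(u+iv)\big)=\alpha u+\Re Q(u+iv)\,\Re p(u+iv)-\Im Q(u+iv)\,\Im p(u+iv).
\]
The cross term $\Im Q\cdot\Im p$ has no definite sign, and since $|\Im p(u+iv)|$ can grow like $1/u$ near $u=0$ (think of $p$ corresponding to a Schwarz kernel) while $|\Im Q|\sim u$, this term does not vanish in the limit and can make $\Re F(W(u+iv))$ negative for arbitrarily small $u>0$. Consequently you have neither the claimed pointwise nonnegativity nor, a priori, the uniform $L^1$-bound on circles needed for weak-$*$ compactness, nor the positivity of the limiting measures.

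The paper resolves exactly this obstruction. It first establishes the uniform bound $\int_{\UC}|\Re F(\rho\xi)|\,|d\xi|\le M$ by estimating the three terms $\alpha u$, $|\Re Q|\,\Re p$, and $|\Im Q|\,|\Im p|$ separately; the last two require Lemma~\ref{LM_posREstrip}, which bounds $\int_a^b \Re p(u+iv)\,dv$ uniformly and $\int_a^b|\Im p(u+iv)|\,dv$ by a logarithm of $u(1-u)$. Only then does Helly selection (equivalently, Zmorovich's Villat--Stieltjes representation) produce \emph{signed} measures $\nu_1,\nu_2$. Positivity of $\nu_1$ and the bound $\nu_2\le\alpha m$ are obtained by a second, finer limiting argument: one shows that the contributions of $\alpha u$ and of the cross term $\Im Q\cdot\Im p$ to $\int\varphi\,\Re F(\rho\,\cdot)$ vanish in the limit, and then squeezes the remaining $\Re Q\cdot\Re p$ term using the extremal values of $\Re Q$. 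Your approach, as written, skips both of these steps.
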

\begin{proof}
Let us prove first that
\begin{equation}\label{EQ_bound}
\int_\UC\big|\Re F(\rho \xi)\big|\,|d\xi|<M<+\infty
\end{equation}
for all $\rho\in(r,1)$ and some constant $M>0$ not depending on~$\rho$.

Since $\Re F(W(w))=\alpha \Re w+\Re\kappa(w)\,\Re p(w)-\Im\kappa(w)\,\Im p(w)$, we can estimate the
integral in~\eqref{EQ_bound} from above by the sum $I_1(u)+I_2(u)+I_3(u)$, where
\begin{gather*}
I_1(u):=\int_0^{T}\alpha\, \Re(u+iv)\,dv=\alpha u T,\qquad
I_2(u):=\int_0^T\big|\Re\kappa(u+iv)\,\Re p(u+iv)\big|\,
dv,\\
I_3(u):=\int_0^T\big|\Im\kappa(u+iv)\,\Im p(u+iv)\big|\, dv,
\end{gather*}
where $u:=\log\rho/\log r\in(0,1)$, $T:=2\pi/|\log r|$ and
$\kappa(w):=\sin\frac{\pi}{2}(w_0-w)\sin\frac{\pi}{2}(\bar w_0+w)$.

Now we note that if $w_0=u_0+iv_0$, then
\begin{gather}\label{EQ_REkappa}
\Re \kappa(u+iv)=\frac12\Big[-\cos\pi u_0+\cos\pi u\ch\pi(v_0-v)\Big],\\\label{EQ_IMkappa}%
\Im \kappa(u+iv)=\frac12\sin\pi u \sh\pi(v_0-v).
\end{gather}
In particular, $\Re \kappa(u+iv)$ is bounded for $u\in(0,1)$ and $v\in[0,T]$, while for~$\Im\kappa$
we have
\begin{equation}\label{EQ_imkappa}
\big|\Im\kappa(u+iv)\big|\le 2u(1-u)\,|\sh\pi(v_0-v)|\le e^{\pi|v-v_0|}u(1-u).
\end{equation}

Therefore, using Lemma~\ref{LM_posREstrip} we conclude that $I_2(u)$ is a bounded function of
$u\in(0,1)$ and that $I_3(u)\to0$ as $u\to1-0$ and as $u\to +0$. This proves~\eqref{EQ_bound}. Then
the following Villat\,--\,Stieltjes representation~\cite[Theorem~1]{Zmorovich} takes
place\footnote{The proof of the result we refer to seems to be published only in Russian; for the
formulation of this result in English, see the Zentralblatt review of~\cite{Zmorovich},
Zbl~0074.05701. Earlier Komatu~\cite{KomatuVSt} obtained the Villat\,--\,Stieltjes representation
in a different form, involving the Weierstra{\ss} zeta-function. Connection between these two forms
can be seen using an expansion of the Weierstra{\ss} zeta-function given, e.g.,
in~\cite[Ch.IV\S20]{Akhiezer}.}:
\begin{equation}\label{EQ_Zmorovich}
F(z)=iC+\int_\UC\mathcal K_r(z\xi^{-1})\,d\nu_1(\xi)+\int_\UC \big[\mathcal
K_r(r\xi/z)-1\big]\,d\nu_2(\xi),\quad z\in\mathbb A_r,
\end{equation}
where $\nu_k$, $k=1,2$, are finite signed Borel measures on~$\UC$ with $\nu_1(\UC)=\nu_2(\UC)$, and
$C\in\Real$.

The proof of~\eqref{EQ_Zmorovich}, given by Zmorovich in~\cite{Zmorovich}, is similar to the proof
of the Herglotz representation theorem, which can be found in~\cite[\S1.9]{Duren}. Namely, for
$\rho\in(r,1)$ and $t\in[0,2\pi]$ we write
$$
\nu^\pm_\rho(t):=\frac1{2\pi}\int_0^{t}F^\pm(\rho,\theta)d\theta,\quad\text{where~}
F^{\pm}(\rho,\theta):=\frac{|\Re F(\rho e^{i\theta})|\pm\Re F(\rho e^{i\theta})}2.
$$
The functions $\nu^+_\rho$ and $\nu^-_\rho$, $\rho\in(0,1)$, are non-decreasing
and, by~\eqref{EQ_bound}, are uniformly bounded. Hence applying the Helly
selection theorem (see, e.g., \cite[p.\,22]{Duren}), we conclude that there is
a sequence $(\rho_{n})\subset(\sqrt r,1)$ converging to~1 and two signed
measures $\nu_j$, $j=1,2$, on~$\UC$ such that
$d\nu^+_{\rho_{j,n}}(\arg\xi)-d\nu^-_{\rho_{j,n}}(\arg\xi)\to d\nu_j(\xi)$ in
the sense of weak-$\ast$ convergence as $n\to+\infty$, where
$\rho_{1,n}:=\rho_n$, $\rho_{2,n}:=r/\rho_n$. It follows, in particular, that
$\nu_1(\UC)=\nu_2(\UC)$, because $\nu^+_\rho(2\pi)-\nu^-_\rho(2\pi)$ does not
depend on $\rho$ according to the Cauchy integral theorem. To see that $\nu_j$
satisfy~\eqref{EQ_Zmorovich}, Zmorovich used the Villat
formula~\eqref{EQ_theVillatFormula} to represent the function $F$ in the
annulus~$A_n:=\{z:\rho_{2,n}<|z|<\rho_{1,n}\}$ via the values of its real part
on~$\partial A_n$. The weak-$\ast$ convergence mentioned above means that for
any continuous function~$\varphi$ on~$\UC$,
\begin{equation}\label{EQ_meas0}
\int_\UC\varphi\,d\nu_j=\lim_{n\to+\infty}\int_\UC\varphi(\xi)\,\Re F(\rho_{j,n}
\xi)\,dm(\xi),\quad j=1,2,\\
%
\end{equation}
where $m$ denotes the normalized standard Lebesgue measure on~$\UC$, $dm(\xi)=|d\xi|/(2\pi)$.

Now consider the integral $J(u):=\int_0^T \varphi(e^{iv\log r})\,\Re p(u+iv)\,\Re\kappa(u+iv)\,dv$,
where ${\varphi:\UC\to[0,1]}$ is a continuous function. Taking into account~\eqref{EQ_rere}, for
each $u\in(0,1)$ we have
\begin{multline}\label{EQ_J}
C_1(0,T)M_1(u)\le%
M_1(u)\int_0^T\Re p(u+iv)\,dv\\\le%
J(u)\le\\%
M_2(u)\int_0^T\Re p(u+iv)\,dv\le%
C_1(0,T)M_2(u),
\end{multline}
where $M_1(u):=\min\{0,\Re\kappa(u+iv):v\in[0,T]\}$ and
$M_2(u):=\max\{0,\Re\kappa(u+iv):v\in[0,T]\}$.

Since $I_1(u)$, $I_3(u)$ and $M_1(u)$ tend to~$0$ as $u\to+0$, we deduce from~\eqref{EQ_meas0} with
$j:=1$ and \eqref{EQ_J} that $\int_{\UC}\varphi\,d\nu_1\ge0$ for any
continuous~$\varphi:\UC\to[0,1]$. It follows that $\nu_1\ge0$.

Analogously, using~\eqref{EQ_meas0} with $j:=2$, we prove that $\nu_2\le \alpha m$. Since
$\nu_1(\UC)=\nu_2(\UC)$, we can conclude that $\nu_1(\UC)\in[0,\alpha]$. Set $\mu_1:=\nu_1/\alpha$
and $\mu_2:=m-\nu_2/\alpha$. Clearly, $\mu_1$ and $\mu_2$ are positive Borel measures on~$\UC$ and
$\mu_1(\UC)+\mu_2(\UC)=1$. Now let us notice that the second integral in~\eqref{EQ_Zmorovich} does
not change in value if one adds to the measure $\nu_2$ any constant multiple of~$m$, because
$\int_\UC\mathcal K_r(\rho\xi)\,dm(\xi)=1$ for any $\rho\in(r,1)$. With this remark one immediately
obtains representation~\eqref{EQ_represent_p} from \eqref{EQ_Zmorovich}. The proof is finished.
\end{proof}

\begin{lemma}\label{LM_fromJulia}
Let $p\in\Hol(\mathbb S,\Complex)$ and $\Re p(w)\ge0$ for all $w\in\mathbb S$. Then there exist
finite non-negative limits
$$
A:=\lim_{v\to+\infty}e^{-\pi v}p(1/2+iv),\quad B:=\lim_{v\to-\infty}e^{\pi v}p(1/2+iv).
$$
\end{lemma}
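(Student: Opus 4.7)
The plan is to transport the problem from the strip to the unit disk via the conformal map $F:\UD\to\mathbb S$ of~\eqref{EQ_DtoS} and then invoke the classical Riesz--Herglotz representation. Setting $q:=p\circ F$, I obtain $q\in\Hol(\UD,\Complex)$ with $\Re q\ge0$, hence
$$q(z)=i\,\Im q(0)+\int_\UC\frac{\xi+z}{\xi-z}\,d\mu(\xi)$$
for a unique finite positive Borel measure~$\mu$ on~$\UC$. My strategy is to recover $A$ and $B$ from the atomic masses of~$\mu$ at $\xi=-1$ and $\xi=+1$: under $F$ these are precisely the two boundary points that correspond to the ends of the segment $\{1/2+iv:v\in\Real\}$.

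Next I would compute $z(v):=F^{-1}(1/2+iv)$ explicitly. From $i(1+z)/(1-z)=e^{\pi iw}$ with $w=1/2+iv$ one obtains $(1+z)/(1-z)=e^{-\pi v}$, whence
$$z(v)=\frac{1-e^{\pi v}}{1+e^{\pi v}}\in(-1,1),\qquad 1+z(v)=\frac{2}{1+e^{\pi v}},\qquad 1-z(v)=\frac{2e^{\pi v}}{1+e^{\pi v}}.$$
In particular $z(v)\to-1$ as $v\to+\infty$ and $z(v)\to+1$ as $v\to-\infty$. These identities translate into
$$e^{-\pi v}p(1/2+iv)=\frac{1+e^{-\pi v}}{2}\,(1+z(v))\,q(z(v)),\qquad e^{\pi v}p(1/2+iv)=\frac{1+e^{\pi v}}{2}\,(1-z(v))\,q(z(v)),$$
so the existence of $A$ and $B$ is reduced to that of the one-sided real limits $\lim_{z\to-1^+}(1+z)q(z)$ and $\lim_{z\to 1^-}(1-z)q(z)$, each of which then differs from the corresponding $A$ or $B$ only by the factor $1/2$.

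The final step treats these radial limits by inserting the Herglotz representation into $(1+z)q(z)$ and appealing to dominated convergence. For real $z\in(-1,0]$ and $\xi\in\UC$, expanding $|\xi-z|^2=1-2z\Re\xi+z^2$ and using $\Re\xi\ge-1$ together with $z\le 0$ yields $|\xi-z|\ge 1+z$, and hence the uniform majorant
$$\left|(1+z)\,\frac{\xi+z}{\xi-z}\right|\le 2,\qquad \xi\in\UC,\ z\in(-1,0].$$
As $z\to-1^+$ the integrand converges pointwise to $2$ at $\xi=-1$ and to $0$ elsewhere, so dominated convergence gives $(1+z)q(z)\to 2\mu(\{-1\})$, and therefore $A=\mu(\{-1\})\ge 0$. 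A symmetric argument based on $|\xi-z|\ge 1-z$ for $z\in[0,1)$ yields $B=\mu(\{+1\})\ge 0$. The only genuine technical point is the elementary inequality $|\xi-z|\ge 1\pm z$ that legitimates the dominated convergence step; once this is in hand, finiteness and non-negativity of the limits follow automatically from $\mu(\{\pm1\})\le\mu(\UC)=\Re q(0)<\infty$.
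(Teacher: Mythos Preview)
Your argument is correct. The explicit computation of $z(v)$, the reduction to the radial limits of $(1\pm z)q(z)$, and the dominated convergence step (justified by the elementary inequality $|\xi-z|\ge 1\pm z$ for real $z$ of the appropriate sign) are all sound; the identification $A=\mu(\{-1\})$, $B=\mu(\{+1\})$ is exactly what one expects.

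The paper proceeds differently: instead of passing to the disk via~$F$, it maps $\mathbb S$ to the upper half-plane by $w\mapsto e^{i\pi w}$, obtaining functions $p_1,p_2:\UH\to\overline{\UH}$ for which the two limits in question become angular derivatives at~$\infty$, and then simply cites the classical Julia--Wolff--Carath\'eodory type result (Valiron). Your route is more self-contained, since the Herglotz representation plus dominated convergence is essentially how one \emph{proves} such angular-derivative statements; the paper's route is shorter on the page but outsources the analytic content to a reference. Substantively the two approaches express the same fact: the limits $A$ and~$B$ are the point masses of the representing measure at the two prime ends of the strip.
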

\begin{proof}
For $z$ in the upper half-plane $\mathbb H:=\{z:\Im z>0\}$ write
$$
p_1(z):=ip\left(\frac{\log z}{i\pi}\right),\quad p_2(z):=p_1(-1/z),
$$
where the branch of $\log$ is chosen by setting $\log i=i\pi/2$. Then the limits in the statement
of lemma will take the following form
$$
A=\lim_{y\to+\infty}\frac{p_2(iy)}{iy},\quad B=\lim_{y\to+\infty}\frac{p_1(iy)}{iy}.
$$
Notice that $p_1,p_2\in\Hol(\UH,\overline\UH)$ and apply the classical result about existence of
the angular derivative at~$\infty$, see, e.\,g., \cite[Ch.\,IV Sect.\,26]{Valiron}.
\end{proof}

\begin{proof}[\bf Proof of Proposition~\ref{PR_VF}] For simplicity we assume that
$r(t)>0$ for all $t\ge0$. However, the argument below can be easily adapted for the general case of
a canonical domain system~$(D_t)$ of mixed type.

By Theorem~\ref{TH_EF<->sWHVF}\,-(iv) there exists an $L^d$-evolution family over~$(D_t)$ and a
null set ${N_1\subset[0,+\infty)}$ such that for every $s\ge0$,
$$
\frac{d\varphi_{s,t}(z)}{dt}=G_t(\varphi_{s,t}(z))\qquad \text{for all $z\in D_s$ and
all~$t\in[s,+\infty)\setminus N_1$.}
$$
By Theorem~\ref{TH_lifting_evolut_fam} there exists an $L^d$-evolution family $(\Psi_{s,t})$
in~$\mathbb S$ such that $\varphi_{s,t}\circ W_s=W_t\circ\Psi_{s,t}$ for all $s\ge0$ and all~$t\ge
s$, where $W_\tau(w):=\exp(w\log r(\tau))$ for all $\tau\ge0$ and all~$w\in\mathbb S$. Further by
Theorem~\ref{TH_SC-EF-Diff}, there exists a null-set $N_2\subset[0,+\infty)$ such that for every
$s\ge0$,
$$%
\frac{d\Psi_{s,t}(w)}{dt}=\tilde G_t(\Psi_{s,t}(w))\qquad \text{for all $w\in\mathbb S$ and
all~$t\in[s,+\infty)\setminus N_2$,}
$$%
where $\tilde G_t:\mathbb S\to\Complex$ is an infinitesimal generator for each
$t\in[0,+\infty)\setminus N_2$. Finally, the mapping $t\mapsto r(t)$ is of class~ $AC^d$. Hence
$t\mapsto \log r(t)$ is differentiable for all $t\ge0$ aside some null-set~$N_3$. Thus setting in
the above equalities $z:=W_s(w)$ and then letting $s:=t$, we conclude that for all
$t\in[0,+\infty)\setminus N$, where $N:=N_1\cup N_2\cup N_3$, and all $w\in\mathbb S$,
\begin{equation}\label{EQ_Gt}
G_t\big(W_t(w)\big)=W_t(w)\left[w\frac{r'(t)}{r(t)}+\tilde G_t\big(w\big)\log r(t)\right].
\end{equation}

Now to show that $G_t$ is of form~\eqref{EQ_GtRepresent}, we fix any $t\in[0,+\infty)\setminus N$
and take advantage of Berkson~--~Porta representation for infinitesimal generators in the unit
disk~$\UD$. Namely, according to Remark~\ref{RM_conf-inv-HVF} with $F$ given by~\eqref{EQ_DtoS},
$\tilde G_t(w)=F'\big(F^{-1}(w)\big)H\big(F^{-1}(w)\big)$ for all $w\in\mathbb S$, where
$F^{-1}(w)=i\tg\big(\pi (w-1/2)/2\big)$ and $H(z):=(\tau-z)(1-\bar\tau z)p(z)$ for some point
$\tau\in\overline\UD$ and some function $p\in\Hol(\UD,\Complex)$ satisfying $\Re p(z)\ge0$ for all
$z\in\UD$. Writing $w_0:=F^{-1}(\tau)$, we finally get that either
\begin{align}\label{EQ_tildaGyes}
\tilde G_{t}(w)&=\sin[\pi(\bar w_0+w)/2]\sin[\pi(w_0-w)/2]\tilde p_{t}(w),\quad
w_0\in\overline{\mathbb S}\setminus\{\infty\},\\
\nonumber\text{~\hskip-4em if~$\tau\in\overline\UD\setminus\{\pm1\}$, or}\\
\label{EQ_tildaGno} \tilde G_{t}(w)&=e^{\pm i\pi w}\tilde p_{t}(w)\quad
\text{if~$\tau=\pm1$},
\end{align}
where in both cases $\tilde p_t\in\Hol(\mathbb S,\Complex)$ and $\Re \tilde p_t(w)\ge 0$ for all
$w\in\mathbb S$.

Assume first that equality~\eqref{EQ_tildaGyes} takes place. Applying Lemma~\ref{LM_represent_p}
for $r:=r(t)$, $F(z):=-G_t(z)/z$, $\alpha:=-r'(t)/r(t)$ and $p(w):=-\tilde p_t(w)\log r(t)$ we
obtain formula~\eqref{EQ_GtRepresent}. Thus the proof is finished in this case.

Assume now that $\tilde G_t$ is given by~\eqref{EQ_tildaGno}. Then, on the one
hand, by Lemma~\ref{LM_fromJulia} the function $J(v):=\tilde G_t(iv+1/2)$,
$v\in\Real$, should have a finite purely imaginary limit for $v\to+\infty$ or
for $v\to-\infty$. On the other hand, from~\eqref{EQ_Gt} it follows that $J(v)$
can be written as a periodic function of $v$ plus the linear term
$-ir'(t)v/\big(r(t)\log (t)\big)$. Therefore, $r'(t)=0$ and $J(v)$ is an
imaginary constant. It follows that $G_t(z)=i C_t z$ for all $z\in\mathbb
A_{r(t)}$ and some constant $C_t\in\Real$. Note that $p\equiv0$ belongs to
$\ParClass_r$ for any $r\in(0,1)$. So setting $p_t(w)=0$ for all $w\in D_t$ we
again obtain formula~\eqref{EQ_GtRepresent}. The proof is now complete.
\end{proof}

Now we are going to establish some lemmas which will be used to prove sufficiency in
Theorem~\ref{TH_semi-char-non-deg}. In what follows in this section we assume that
$d\in[1,+\infty]$, $(D_t)$ is a {\it non-degenerate} canonical domains system of order~$d$, and
$\mathcal D:=\{(z,t):\,t\ge0,\,z\in D_t\}$.

\begin{lemma}\label{LM_clV_est}
Let $r\in(0,1)$. Suppose $p\in\ParClass_r$ is given by~\eqref{EQ_represV}. Then for any
$z\in\mathbb A_r$,
\begin{eqnarray}\label{EQ_est_modolus}
|p(z)|&\le&\mu_1(\UC)+\frac{2}{1-r}\left(\frac{|z|}{1-|z|}+\frac{r}{|z|-r}\right),\\
\label{EQ_est_Re}%
-\Re p(z)&\le&\big(\mathcal
K_r\big(r/|z|\big)-1\big)\mu_2(\UC)\le\frac{4r(1-\rho)\mu_2(\UC)}{(\rho-r)(1-r)^2},\quad
\rho:=|z|.
\end{eqnarray}
\end{lemma}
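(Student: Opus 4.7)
Both inequalities follow by estimating separately the two integrals defining $p(z)$, with the normalization $\mu_1(\UC)+\mu_2(\UC)=1$ playing a crucial role. For the first inequality my plan is first to establish the pointwise estimate $|\mathcal K_r(w)-1|\le\frac{2}{1-r}(\frac{|w|}{1-|w|}+\frac{r}{|w|-r})$ for $w\in\mathbb A_r$, and then integrate. I would rewrite the $\nu$-th symmetric group of terms in the series for $\mathcal K_r$ via the elementary identity
\[
\frac{1+r^{2\nu}w}{1-r^{2\nu}w}+\frac{1+w/r^{2\nu}}{1-w/r^{2\nu}}=\frac{2r^{2\nu}w}{1-r^{2\nu}w}+\frac{2r^{2\nu}}{r^{2\nu}-w},
\]
so that $\mathcal K_r(w)-1=\frac{2w}{1-w}+2\sum_{\nu\ge 1}\bigl[\frac{r^{2\nu}w}{1-r^{2\nu}w}+\frac{r^{2\nu}}{r^{2\nu}-w}\bigr]$. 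The triangle inequality termwise, combined with the lower bounds $|1-r^{2\nu}w|\ge 1-|w|$ (from $r^{2\nu}\le 1$) and $|r^{2\nu}-w|\ge |w|-r$ (from $r^{2\nu}\le r<|w|$, valid for $\nu\ge 1$), reduces the remainder to the geometric series $\sum_{\nu\ge 1}r^{2\nu}=r^2/(1-r^2)$, and a short calculation using $r^2\le r$ and $1-r^2\ge 1-r$ delivers the claim. The crucial symmetry $\frac{r/|z|}{1-r/|z|}=\frac{r}{|z|-r}$ and $\frac{r}{r/|z|-r}=\frac{|z|}{1-|z|}$ means that applying this same pointwise bound on the circle $|w|=r/|z|$ produces the \emph{identical} numerical expression. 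Writing $|p(z)|\le\int_\UC|\mathcal K_r(z/\xi)|\,d\mu_1+\int_\UC|1-\mathcal K_r(r\xi/z)|\,d\mu_2$, using $|\mathcal K_r|\le 1+|\mathcal K_r-1|$, and invoking $\mu_1(\UC)+\mu_2(\UC)=1$ then yields~\eqref{EQ_est_modolus}.

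For the second inequality, the first $\le$ is immediate from Remark~\ref{RM_VillatKernel}: on $|w|=\rho:=|z|$ one has $\Re\mathcal K_r(w)\ge\mathcal K_r(-\rho)\ge 0$, while on $|w|=r/\rho$ one has $\Re\mathcal K_r(w)\le\mathcal K_r(r/\rho)$, so integrating termwise the identity $-\Re p(z)=\int_\UC[-\Re\mathcal K_r(z/\xi)]\,d\mu_1+\int_\UC[\Re\mathcal K_r(r\xi/z)-1]\,d\mu_2$ yields $-\Re p(z)\le(\mathcal K_r(r/\rho)-1)\mu_2(\UC)$. The main obstacle is the quantitative bound $\mathcal K_r(r/\rho)-1\le 4r(1-\rho)/[(\rho-r)(1-r)^2]$: a naive mean-value estimate of $\mathcal K_r'$ on $[r,r/\rho]$ produces the wrong power $(\rho-r)^{-2}$ because it fails to see the exact cancellation $\mathcal K_r(r)=1$.

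I would instead subtract termwise in the symmetric partial sums for $\mathcal K_r(r/\rho)$ and $\mathcal K_r(r)$, using the elementary identity
\[
\frac{\rho+r^{2\nu+1}}{\rho-r^{2\nu+1}}-\frac{1+r^{2\nu+1}}{1-r^{2\nu+1}}=\frac{2r^{2\nu+1}(1-\rho)}{(\rho-r^{2\nu+1})(1-r^{2\nu+1})},
\]
to arrive at the clean representation
\[
\mathcal K_r(r/\rho)-1=2(1-\rho)\sum_{k\in\mathbb Z,\,k\text{ odd}}\frac{r^k}{(\rho-r^k)(1-r^k)},
\]
every summand of which is positive (checked separately for $k>0$ and $k<0$, so no cancellation is lost). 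For $k\ge 1$ odd I would use $(\rho-r^k)(1-r^k)\ge(\rho-r)(1-r)$; for $k=-m\le-1$ odd the summand rewrites as $r^m/[(1-\rho r^m)(1-r^m)]$ and is bounded via $(1-\rho r^m)(1-r^m)\ge(1-\rho r)(1-r)$. Summing each half against the geometric series $\sum_{m\ge 1,\,m\text{ odd}}r^m=r/(1-r^2)$ and absorbing the second half into the first via the elementary estimate $1-\rho r\ge\rho-r$ (equivalent to $\rho\le 1$) produces exactly the constant~$4$ in the desired bound.
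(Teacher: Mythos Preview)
Your proof is correct and follows essentially the same strategy as the paper: estimate the Villat kernel termwise via its series expansion, use $1-r^{2k}\ge 1-r$ to collapse into geometric series, and then integrate against $\mu_1,\mu_2$ using the normalization $\mu_1(\UC)+\mu_2(\UC)=1$. The only real difference is in bookkeeping: the paper works with the Laurent form~\eqref{EQ_Loran} of $\mathcal K_r$, whereas you work directly with the partial-fraction (pole) form from~\eqref{EQ_Villat_kernel}. For~\eqref{EQ_est_modolus} this makes no difference at all. For the second inequality in~\eqref{EQ_est_Re} your device of subtracting $\mathcal K_r(r)=1$ termwise in the pole series is a clean way to make the factor $(1-\rho)$ appear explicitly with only $(\rho-r)$ (not $(\rho-r)^2$) in the denominator; the paper merely says ``using again~\eqref{EQ_Loran}'' and leaves the corresponding computation implicit. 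One tiny omission: your final step gives denominator $(\rho-r)(1-r)(1-r^2)$, and you should note $(1-r^2)\ge(1-r)$ to land on the stated $(1-r)^2$.
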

\begin{proof}
Inequality $1-r^{2k}\ge1-r$, where $k\in\Natural$, and the Laurent expansion of $\mathcal K_r$ in
$\mathbb A_r$,
\begin{equation}\label{EQ_Loran}
\mathcal
K_r(z)=\frac{1+z}{1-z}+\sum_{k=1}^{+\infty}\frac{2r^{2k}}{1-r^{2k}}(z^k-z^{-k})=1+2\,\sum_{k=1}^{+\infty}\left(\frac{z^k}{1-r^{2k}}-
\frac{r^{2k}z^{-k}}{1-r^{2k}}\right),
\end{equation}
allows us to estimate $|\mathcal K_r(z)|$ and $|1-\mathcal K_r(r/z)|$, which
together with~\eqref{EQ_represV} leads to~\eqref{EQ_est_modolus}.

The inequality $\Re p(z)\ge \big(1-\mathcal K_r(r/|z|)\big)\mu_2(\UC)$ follows
from~\eqref{EQ_represV} and Remark~\ref{RM_VillatKernel}. Then,  using
again~\eqref{EQ_Loran}, we obtain~\eqref{EQ_est_Re}.
\end{proof}

\begin{lemma}\label{LM_WHVF}
Let $G:\mathcal D\to\Complex$. Suppose that there exist functions $p:\mathcal D\to\Complex$ and
$C:[0,+\infty)\to\Real$ such that conditions (i)\,--\,(iv) are fulfilled. Then $G$ is a weak
holomorphic vector field of order~$d$ in~$\mathcal D$.
\end{lemma}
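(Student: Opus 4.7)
The plan is to verify each of the three axioms WHVF1, WHVF2, WHVF3 from Definition~\ref{D_WHVF} directly, replacing $G$ (if necessary on a set of measure zero in the $t$-variable) by the right-hand side of~(i), so that the representation~(i) holds for every $t\ge 0$. Since modifying $G$ on a null set of $t$-values does not affect measurability in~$t$, and the axioms WHVF1 and WHVF3 only restrict the behavior of $G$ as a function of $t$, this harmless normalization is legitimate.

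First I would dispose of WHVF2 and WHVF1, which are essentially bookkeeping. For WHVF2, fix any $t\ge 0$; by condition~(iii) the function $p(\cdot,t)$ belongs to $\ParClass_{r(t)}\subset\Hol(\mathbb A_{r(t)},\Complex)$, so the explicit formula in~(i) exhibits $G(\cdot,t)$ as $z$ times a holomorphic function on $D_t=\mathbb A_{r(t)}$. For WHVF1, fix $z\in\Complex$ with $E_z\neq\emptyset$; then on $E_z$ the function $G(z,\cdot)$ is the product of $z$ with the sum $iC(t)+r'(t)p(z,t)/r(t)$, in which $C$ is measurable by~(iv), $r'$ is measurable since $r\in AC^d$, $r$ is continuous and (by non-degeneracy) strictly positive, and $p(z,\cdot)$ is measurable by~(ii); hence $G(z,\cdot)$ is measurable on $E_z$.

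The main step, and the only nontrivial one, is WHVF3. Fix a compact set $K\subset \mathcal D$ and let $I:=\pr_{\mathbb R}(K)$, a compact subset of $[0,+\infty)$. Since $\mathcal D$ is relatively open in $\Complex\times[0,+\infty)$ and $K\subset\mathcal D$ is compact, there exists $\delta\in(0,1/2)$ such that
\[
r(t)+\delta\;\le\;|z|\;\le\;1-\delta\qquad\text{for all }(z,t)\in K.
\]
In particular $r(t)\le 1-2\delta$ and $|z|-r(t)\ge\delta$ on $K$, so both $1-|z|\ge\delta$ and $1-r(t)\ge\delta$. By condition~(iii) and Lemma~\ref{LM_clV_est} applied to $p(\cdot,t)\in\ParClass_{r(t)}$ (and using $\mu_1(\UC)+\mu_2(\UC)=1\le 1$), we obtain
\[
|p(z,t)|\;\le\; 1+\frac{2}{1-r(t)}\left(\frac{|z|}{1-|z|}+\frac{r(t)}{|z|-r(t)}\right)\;\le\; M_K
\]
for all $(z,t)\in K$, with a constant $M_K$ depending only on $\delta$. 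Since $(D_t)$ is non-degenerate and $r$ is continuous, $r_*:=\min_I r>0$, so
\[
|G(z,t)|\;\le\;|z|\left(|C(t)|+\frac{|r'(t)|}{r(t)}M_K\right)\;\le\;|C(t)|+\frac{M_K}{r_*}|r'(t)|\qquad\text{for all }(z,t)\in K.
\]
Set $k_K(t):=|C(t)|+(M_K/r_*)|r'(t)|$ for $t\in I$. Then $k_K\in L^d(I,\Real)$ because $C\in L^d_{\rm loc}$ by~(iv) and $r\in AC^d$ forces $r'\in L^d_{\rm loc}$; this verifies WHVF3 and completes the proof.

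The only genuine difficulty is the uniform estimate $|p(z,t)|\le M_K$ on~$K$: it requires that the distance from $K$ to the (moving) boundary of~$\mathcal D$ be bounded below, which is precisely the compactness argument above, combined with the explicit elementary bound provided by Lemma~\ref{LM_clV_est}. The rest is routine.
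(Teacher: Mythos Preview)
Your proof is correct and follows essentially the same route as the paper's: both arguments reduce WHVF3 to the existence of a uniform $\delta>0$ separating $K$ from the moving boundary of~$\mathcal D$, apply Lemma~\ref{LM_clV_est} to bound $|p(z,t)|$ uniformly on~$K$, and then invoke $C\in L^d_{\rm loc}$ and $r'\in L^d_{\rm loc}$. Your treatment of WHVF1, WHVF2 and the null-set normalization in~(i) is more explicit than the paper's (which simply declares them ``trivial''), but the substance is the same.
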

\begin{proof}
Conditions WHVF1 and WHVF2 from Definition~\ref{D_WHVF} hold trivially.

To prove~WHVF3, fix any compact $K\subset\mathcal D$. Then there exists
$\delta>0$ and $T>0$ such that $r(t)+\delta\le|z|\le1-\delta$ and $t\le T$ for
all $(z,t)\in K$. Applying Lemma~\ref{LM_clV_est}, and taking into account that
$r(T)\le r(t)\le r(0)$ for all $t\in[0,T]$, from \eqref{EQ_est_modolus} we
deduce that
$$|G(z,t)|\le
|C(t)|+\frac{|r'(t)|}{r(T)}\left(1+\frac{4/\delta}{1-r(0)}\right)\quad
\text{for all}~(z,t)\in K.$$ Thus appealing to  condition~(iv) and
Definition~\ref{def-cansys} completes the proof.
\end{proof}

\Unit{{\bf Proof of Theorem~\ref{TH_semi-char-non-deg}: sufficiency.}}\label{PROOF} Suppose that
there exist functions $p:\mathcal D\to\Complex$ and $C:[0,+\infty)\to\Real$ such that conditions
(i)\,--\,(iv) are fulfilled. Then by Lemma~\ref{LM_WHVF}, $G$ is a weak holomorphic vector field of
order~$d$ in~$\mathcal D$. It remains to prove that it is semicomplete. Assume on the contrary that
there exists $(z,s)\in\mathcal D$ such that the domain of definition $J_*(z,s)$ of the unique
non-extendable solution $w_s^*(\cdot,z)$ to the initial value problem~\eqref{EQ_CarODE-IVP} (see
Theorem~\ref{TH_CarODETheory}) is bounded from above. Denote $t^*:=\sup J_*(z,s)$ and
$\rho(t):=\big|w_s^*(t,z)\big|$ for all $t\in [s,t^*)$. Apply Lemma~\ref{LM_clV_est}. According to
(i) and~\eqref{EQ_est_Re},
\begin{equation}\label{EQ_rho_est}
\rho'(t)=\rho(t)\frac{r'(t)}{r(t)}\Re p\big(w_s^*(t,z),t\big)\le-\frac{4 r'(t) \big(1-\rho(t)\big)}
{\big(\rho(t)-r(t)\big)\big(1-r(t)\big)^2}~~~\text{~~for a.e. $t\in[s,t^*)$}.
\end{equation}

We claim that $\rho(t)\le \rho^*(t):=1-\big(1-\rho(s)\big)\exp\alpha\big(r(t)-r(s)\big)$ for all
$t\in[s,t^*)$, where $\alpha:=4\big(\rho(s)-r(s)\big)^{-1}\big(1-r(s)\big)^{-2}$. Indeed, consider
the  function $f(t):=\rho(t)-\rho^*(t)$. This function is locally absolutely continuous
in~$[s,t^*)$. Since $\rho^*(t)\ge \rho(s)$ and $r(t)\le r(s)$ for all $t\in[s,t^*)$,
from~\eqref{EQ_rho_est} it follows that $f'(t)\le0$ for a.e. $t\in[s,t^*)$ such that $f(t)\ge0$.
Bearing in mind that~$f(s)=0$, we therefore conclude that $\rho(t)-\rho^*(t)\le0$ for all
$t\in[s,t^*)$.

It follows that since $t^*<+\infty$, there exists $\delta_1>0$ such that
$\rho(t)=|w_s^*(z,t)|<1-\delta_1$ for all $t\in[s,t^*)$. By Remark~\ref{RM_p-tilde}, $\tilde
p(z,t):=1-p(r(t)/z,t)$ belongs to~$\ParClass_{r(t)}$ for each $t\ge0$. Therefore, choosing
$\rho(t):=|r(t)/w_s^*(z,t)|$ in the above argument, one can conclude also that there exists
$\delta_2>0$ such that $|w_s^*(z,t)|-r(t)>\delta_2$ for all $t\in[s,t^*)$. The fact that these
conclusions contradict Theorem~\ref{TH_CarODETheory}\,-(iii), proves that the vector field $G$ is
semicomplete.
\endstep

\Unit{{\bf Proof of Theorem~\ref{TH_semi-char-non-deg}: necessity.}} Suppose
that $G$ is a semicomplete weak holomorphic vector field of order~$d$. By
Proposition~\ref{PR_VF} there exist functions $p:\mathcal D\to\Complex$ and
$C:[0,+\infty)\to\Real$ satisfying conditions~(i) and~(iii). It remains to
prove~(ii) and (iv).

First of all, we notice that $\Im \mathcal N(p_t)=0$ for all $t\ge0$ because
$\mathcal N(\mathcal K_{r(t)})=1$ (see Remark~\ref{RM_N}). Hence
$C(t)=(1/2\pi)\Im\int_\UC G(\rho\xi,t)/(\rho\xi)\,{|d\xi|}$, where we have
fixed some $\rho\in\big(r(0),1\big)$. Since by definition $G(\cdot,z)$ is
measurable for all $z\in \mathbb A_{r(0)}$ and for each $T>0$ there exists a
non-negative $k_T\in L^d\big([0,T],\Real\big)$ such that $|G(z,t)|\le k_T(t)$
whenever $|z|=\rho$ and $t\in[0,T]$, it follows with the help of the Lebesgue
dominated convergence theorem that $t\mapsto C(t)$ belongs to $L^d_{\rm
loc}\big([0,+\infty),\Real\big)$. This proves~(iv).

We are left with the proof of~(ii). Fix any $s\ge0$ and any $w\in D_s$. On one hand, by the
construction we made in the proof of Proposition~\ref{PR_VF}, $p(w,t)=0$ for a.e. $t\ge s$ such
that $r'(t)=0$. On the other hand, $t\mapsto p(w,t)$ is measurable in~$E_*:=\{t\ge
s:~r'(t)\neq0\}$, because  $t\mapsto r'(t)p(t)/r(t)=G(w,t)/w-iC(t)$ is measurable and $t\mapsto
r(t)$ is locally absolutely continuous in~$[0,+\infty)$. Thus $t\mapsto p(w,t)$ is measurable in
$[s,+\infty)$. Statement (ii) follows now easily. \proofbox\endstep

\subsection{Degenerate case}\label{S_degenerate}

In this section we will show that if a canonical domain system~$(D_t)$ is {\it degenerate},
i.e. $D_t:=\mathbb D^*:=\UD\setminus\{0\}$ for all $t\ge0$, then any evolution
family~$(\varphi_{s,t})$ over~$(D_t)$ can be extended to an evolution family in~$\UD$ with a
common Denjoy~--~Wolff point at the origin. Namely, we prove the following

\begin{proposition}[\protect{compare~\cite[Prop.\,(1.4.30)]{Abate}}]\label{PR_degenerate}
Let $d\in[1,+\infty]$. Suppose $D_t:=\UD^*$ for all $t\ge0$ and  let
$(\varphi_{s,t})$ be an $L^d$-evolution family over~$(D_t)$. Then
$\lim_{z\to0}\varphi_{s,t}(z)=0$ for any~$s\ge0$ and $t\ge s$ and the formula
\begin{equation}\label{EQ_extend}
\phi_{s,t}:=\left\{\begin{array}{ll}\varphi_{s,t}(z),& \hbox{\rm if $z\in\UD^*$},\\0,&
\hbox{\rm if $z=0$,}\end{array}\right.
\end{equation}
defines an $L^d$-evolution family in~$\UD$.
\end{proposition}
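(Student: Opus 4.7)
The plan is to extend each $\varphi_{s,t}$ holomorphically across~$0$ via Riemann's removable singularity theorem, identify the extension value at~$0$ with~$0$ by a winding-number argument based on the index-preserving property of the class~$\classM(0,0)$, and then verify the evolution-family axioms in~$\UD$ by reducing them to the ones already satisfied on~$\UD^*$.

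First I would fix arbitrary $s\ge0$ and $t\ge s$ and observe that $\varphi_{s,t}\colon\UD^*\to\UD^*$ is bounded, so Riemann's removable singularity theorem yields a holomorphic extension $\phi_{s,t}\colon\UD\to\Complex$. By Lemma~\ref{LM_evol-fam-classM}\,-(iv) the map $\varphi_{s,t}$ is univalent, in particular non-constant; combined with $\varphi_{s,t}(\UD^*)\subset\UD$ this allows the maximum modulus principle to yield $|\phi_{s,t}(z)|<1$ for all $z\in\UD$. Hence $\phi_{s,t}$ is a holomorphic self-map of~$\UD$ as soon as we know its value at~$0$ lies in~$\UD$.

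The main step, where the topological content of the problem enters, is to prove $\phi_{s,t}(0)=0$. For this I invoke Lemma~\ref{LM_evol-fam-classM}\,-(iii), which in the present degenerate setting gives $\varphi_{s,t}\in\classM(0,0)$: for every oriented closed curve $\gamma\subset\UD^*$, the image $\varphi_{s,t}\circ\gamma$ has the same index at the origin as $\gamma$. I would apply this to $\gamma_\varepsilon(\theta):=\varepsilon e^{i\theta}$, $\theta\in[0,2\pi]$, which has index~$1$ at~$0$. Assume, for contradiction, that $a:=\phi_{s,t}(0)\ne 0$. Continuity of $\phi_{s,t}$ at the origin ensures that for all sufficiently small $\varepsilon>0$ the curve $\varphi_{s,t}\circ\gamma_\varepsilon$ lies in $D(a,|a|/2)$, which does not contain the origin; its index at~$0$ therefore vanishes, contradicting the equality with $I(\gamma_\varepsilon)=1$. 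Hence $\phi_{s,t}(0)=0$, matching the definition in~\eqref{EQ_extend}.

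Finally, I would verify that $(\phi_{s,t})$ is an $L^d$-evolution family in~$\UD$. Property~EF1 is immediate from $\phi_{s,s}|_{\UD^*}=\id_{\UD^*}$ and $\phi_{s,s}(0)=0$. Property~EF2, i.e.\ $\phi_{s,t}=\phi_{u,t}\circ\phi_{s,u}$, holds on~$\UD^*$ because it holds for the $\varphi$'s, and holds trivially at $z=0$ since both sides vanish there; by the identity principle (or simply by continuity) it therefore holds throughout~$\UD$. For EF3, given $T>0$ and $z\in\UD$: if $z\ne0$ one takes $k_{z,T}$ as provided by~EF3 for $(\varphi_{s,t})$, while for $z=0$ the required inequality is automatic with $k_{0,T}\equiv 0$, since $\phi_{s,t}(0)=0$ for all admissible~$s,t$. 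The only non-formal step in the argument is the winding-number computation in the previous paragraph; the rest of the proof is a direct verification of the definitions.
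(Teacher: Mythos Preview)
Your proof is correct and follows essentially the same route as the paper's: both use Riemann's removable singularity theorem, then invoke Lemma~\ref{LM_evol-fam-classM}\,(iii) (membership in $\classM(0,0)$) to force $\phi_{s,t}(0)=0$, and finally check EF1--EF3 directly, the only nontrivial case being $z=0$ in EF3. You have simply spelled out in detail the winding-number argument and the maximum-modulus step that the paper leaves implicit.
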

\begin{proof}
Fix any $s\ge0$ and any $t\ge s$. Since $\varphi_{s,t}$ is bounded in~$\UD^*$, the origin is its
removable singularity. By Lemma~\ref{LM_evol-fam-classM}, $\varphi_{s,t}\in\mathbb M\big(0,0\big)$.
Hence $\lim_{z\to0}\varphi_{s,t}(z)=0$.

The fact that $(\phi_{s,t})$ satisfies conditions EF1, EF2, and EF3 of
Definition~\ref{D_EV-simply} follows from the corresponding conditions in
Definition~\ref{def-ev} except for EF3 with $z=0$, which holds by the mere fact
that $\phi_{s,t}(0)=0$ for all $s\ge0$ and all $t\ge s$. The proof is complete.
\end{proof}

The converse statement is obvious: {\it if $(\phi_{s,t})$ is an $L^d$-evolution
family in~$\UD$ and ${\phi_{s,t}(0)=0}$ for all $s\ge0$ and all $t\ge s$, then
$(\varphi_{s,t}):=(\phi_{s,t}|_{\UD^*})$ is an $L^d$-evolution family
over~$(D_t)$ with $D_t=\UD^*$ for all $t\ge0$.}

Taking into account Theorem~\ref{TH_EF<->sWHVF} and the above Proposition~\ref{PR_degenerate}, one
can deduce from the results of~\cite{BCM1} a constructive characterization of semicomplete weak
holomorphic vector fields for the degenerate case. Indeed, on the one hand, \cite[Theorem
1.1]{BCM1} establishes the 1-to-1 correspondence between Herglotz vector fields and evolution
families in~$\UD$, while \cite[Theorem~4.8]{BCM1} characterizes Herglotz vector fields in terms of
the Berkson\,--\,Porta representation, see Remarks~\ref{RM_conf-inv-HVF} and~\ref{RM_infDW}. On the
other hand, Theorem~\ref{TH_EF<->sWHVF} establishes the analogous 1-to-1 correspondence between
evolution families and semicomplete weak holomorphic vector fields in the doubly connected
settings. Hence, in view of Proposition~\ref{PR_degenerate}, semicomplete weak holomorphic vector
fields of order~$d\in[1,+\infty]$ over the degenerate canonical domain system are exactly the
Herglotz vector fields, given by \cite[Theorem~4.8]{BCM1} with $\tau(t)=0$ for
a.e.~$t\in[0,+\infty)$.

In this way we obtain the following analogue of~Theorem~\ref{TH_semi-char-non-deg} for the
degenerate case. Let us recall that by $\mathcal C$ we denote the Carath\'eodory class of all
functions $p\in\Hol(\UD,\Complex)$ such that $p(0)=1$ and $\Re p(w)>0$ for all $w\in\UD$.

\begin{proposition}\label{PR_char_semi_degener}
Let $(D_t)$ be a degenerate canonical domain system. Then $G:\mathcal D\to\Complex$, where
$\mathcal D:=\{(z,t):\,t\ge0,\, z\in D_t\}=\UD^*\times[0,+\infty)$, is a semicomplete weak
holomorphic vector field in~$\mathcal D$ of order $d\in[1,+\infty]$ if and only if there exist
functions $\alpha:[0,+\infty)\to[0,+\infty)$, $C:[0,+\infty)\to\Real$, and $p:\mathcal
D\to\Complex$ satisfying the following conditions:
\begin{mylist}
\item[(i)] $G(w,t)=w\big[iC(t)-
\alpha(t) p(w,t)\big]$ for a.e. $t\ge0$ and all $w\in D_t$;
\item[(ii)] for each $w\in D:=\cup_{t\ge0} D_t$ the function $p(w,\cdot)$ is measurable in
$E_w:=\{t\ge0:\,(w,t)\in\mathcal D\}$;
\item[(iii)]  for each $t\ge0$ the function $p(\cdot\,,t)$ belongs to the Carath\'eodory class $\mathcal C$;
\item[(iv)] $C\in L^d_{\mathrm{loc}}\big([0,+\infty),\Real\big)$ and $\alpha\in L^d_{\mathrm{loc}}\big([0,+\infty),
[0,+\infty)\big)$.
\end{mylist}
\end{proposition}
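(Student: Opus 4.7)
The proposition asserts a one-to-one correspondence between semicomplete weak holomorphic vector fields in $\mathcal D=\UD^*\times[0,+\infty)$ and Herglotz vector fields in $\UD$ with a common Denjoy\,--\,Wolff point at the origin. As sketched in the paragraph preceding the statement, the strategy is to reduce to evolution families in $\UD$ that fix the origin via Theorem~\ref{TH_EF<->sWHVF} and Proposition~\ref{PR_degenerate} (together with its obvious converse), and then to invoke the Berkson\,--\,Porta representation from~\cite{BCM1}. The argument splits cleanly along this line.

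For the sufficiency, I would begin from functions $\alpha, C, p$ satisfying (i)\,--\,(iv) and extend $G$ to $\tilde G:\UD\times[0,+\infty)\to\Complex$ by the same formula~(i). Setting $q(z,t):=\alpha(t)p(z,t)-iC(t)$, one has $\Re q(\cdot,t)\ge 0$ since $\alpha(t)\ge 0$ and $\Re p(\cdot,t)>0$, so $\tilde G(z,t)=-z\,q(z,t)$ is an infinitesimal generator in $\UD$ for a.e.~$t$ by the Berkson\,--\,Porta representation (Remark~\ref{RM_conf-inv-HVF}) with $\tau(t)\equiv 0$. Conditions WHVF1\,--\,WHVF3 for $\tilde G$ follow from (ii), (iv), and the standard Carath\'eodory-class bound $|p(z,t)|\le (1+|z|)/(1-|z|)$. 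Hence $\tilde G$ is an $L^d$-Herglotz vector field in~$\UD$; by the results of~\cite{BCM1} it generates an $L^d$-evolution family $(\phi_{s,t})$ in~$\UD$, and since $\tilde G(0,t)=0$ for all~$t$, the uniqueness of solutions to~\eqref{EQ_genGolKom} forces $\phi_{s,t}(0)=0$. By the converse of Proposition~\ref{PR_degenerate}, the restrictions $\varphi_{s,t}:=\phi_{s,t}|_{\UD^*}$ form an $L^d$-evolution family over~$(D_t)$; Theorem~\ref{TH_EF<->sWHVF}\,-(A) then ensures that the semicomplete weak holomorphic vector field associated to~$(\varphi_{s,t})$ coincides with~$G$ on~$\mathcal D$.

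For the necessity, let $G$ be a semicomplete weak holomorphic vector field of order~$d$ in~$\mathcal D$. Theorem~\ref{TH_EF<->sWHVF}\,-(A) yields an evolution family $(\varphi_{s,t})$ over~$(D_t)$, and Proposition~\ref{PR_degenerate} extends it to an $L^d$-evolution family $(\phi_{s,t})$ in~$\UD$ with $\phi_{s,t}(0)=0$ for all $0\le s\le t$. By~\cite[Theorems~6.2 and~4.8]{BCM1} together with Remark~\ref{RM_conf-inv-HVF}, $(\phi_{s,t})$ is generated by a Herglotz vector field of the form $\tilde G(z,t)=(\tau(t)-z)(1-\overline{\tau(t)}\,z)\,q(z,t)$ with $\Re q(\cdot,t)\ge 0$. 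The identity $\phi_{s,t}(0)=0$ forces $\tilde G(0,t)=0$ for a.e.~$t$; by Remark~\ref{RM_infDW}, either $\tilde G(\cdot,t)\equiv 0$ (so $\tau(t)$ may be set to~$0$) or the unique zero of $\tilde G(\cdot,t)$ in~$\UD$ must be the origin, again forcing $\tau(t)=0$. Thus $G(w,t)=-w\,q(w,t)$ on~$\mathcal D$, and it remains to decompose $-q$ into the required form. I would set $\alpha(t):=\Re q(0,t)\ge 0$ and $C(t):=-\Im q(0,t)$, then define $p(w,t):=(q(w,t)-i\,\Im q(0,t))/\alpha(t)$ on the set where $\alpha(t)>0$ (on this set the open mapping principle applied to the harmonic function $\Re q(\cdot,t)$ guarantees $\Re p(\cdot,t)>0$); on the complement, where $\Re q(\cdot,t)\equiv 0$ forces $q(\cdot,t)$ to be the imaginary constant $i\,\Im q(0,t)$, set $p(\cdot,t)\equiv 1$. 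Properties (i) and (iii) are then immediate, measurability~(ii) follows from measurability of $q(0,t)$ (itself a consequence of Cauchy's formula applied to $q(\cdot,t)$ on a small circle around the origin, combined with the $L^d_{\rm loc}$-bounds from WHVF3), and the $L^d_{\rm loc}$-integrability~(iv) of~$\alpha$ and~$C$ follows from condition~WHVF3 applied to~$\tilde G$ on a compact set of the form $\overline{D(0,\rho)}\times[0,T]$.

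The main obstacle, as I see it, is the necessity direction: specifically, justifying $\tau(t)\equiv 0$ (which rests on the structural description of infinitesimal generators from~\cite{BCM1} together with the essential-uniqueness clause of Theorem~\ref{TH_EF<->sWHVF}) and carrying out the decomposition of~$q$ without losing measurability of~$p$ where $\alpha(t)$ vanishes. The remaining verifications are routine bookkeeping between the simply and doubly connected frameworks.
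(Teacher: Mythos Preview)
Your proposal is correct and follows essentially the same route as the paper, which proves the proposition only by the sketch in the paragraph preceding the statement (reduce to evolution families in~$\UD$ fixing the origin via Theorem~\ref{TH_EF<->sWHVF} and Proposition~\ref{PR_degenerate}, then invoke \cite[Theorems~1.1 and~4.8]{BCM1} with $\tau(t)=0$). Your write-up supplies the details the paper omits, in particular the decomposition of $q$ into $\alpha$, $C$, $p$ and the measurability/integrability checks.

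One small correction: in the necessity direction you write ``Theorem~\ref{TH_EF<->sWHVF}\,-(A) yields an evolution family $(\varphi_{s,t})$ over~$(D_t)$'', but it is part~(B) of that theorem that produces an evolution family from a given semicomplete vector field; part~(A) goes the other way. Likewise, in the sufficiency direction your final appeal to part~(A) is valid but somewhat indirect: once you know $\tilde G$ is Herglotz and $\tilde G(0,t)=0$, uniqueness in Theorem~\ref{TH_CarODETheory} already shows that trajectories starting in~$\UD^*$ never hit the origin, so $G=\tilde G|_{\mathcal D}$ is semicomplete in~$\mathcal D$ without passing back through the evolution-family correspondence.
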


\section{Examples}\label{S_examples}

\begin{example}\label{EX_Rotations}
A set of trivial examples can be obtained by considering static non-degenerate
canonical domain systems $(D_t)$, i.e. canonical domain systems for which $D_t$
does not depend on $t$ and does not coincide with the punctured disk~$\UD^*$,
say $D_t:=\mathbb A_r$ for all $t\ge0$ and some constant $r\in(0,1)$.

In this case by Theorem~\ref{TH_semi-char-non-deg}, the semicomplete weak
holomorphic vector fields of order~$d$ are exactly the functions of the form
$G(w,t)=iC(t)w$, where $C$ belongs to $L^d_{\rm
loc}\big([0,+\infty),\Real\big)$. Hence, according to
Theorem~\ref{TH_EF<->sWHVF}, the $L^d$-evolution families $(\varphi_{s,t})$
over static non-degenerate canonical domain systems are just families of
rotations, $\varphi_{s,t}(z)=ze^{i(\theta(t)-\theta(s))}$, where $\theta\in
AC^d\big([0,+\infty),\Real\big)$.
\end{example}

\begin{example}\label{EX_DW-fail}
According to the classical Denjoy\,--\,Wolff theorem, a self-mapping of the unit disk cannot have
more than one fixed point unless it is the identity map. The infinitesimal version of this
statement implies that a Herglotz vector field $G(z,t)$ in the unit disk (see
Definition~\ref{D_HVF}) cannot have more than one zero for almost every $t\ge0$ such that
$G(\cdot,t)$ does not vanish identically, see Remark~\ref{RM_infDW}.

For mappings of the class $\classM(r_1,r_2)$ the situation is
different. One can have any finite number of fixed points in
$\mathbb A_r$. Now we show an example of an evolution family over
an $L^\infty$-canonical system of annuli sharing an arbitrary
finite number of fixed points.

Let $N\in\Natural$, $r_0\in(0,1)$ and $r_*\in(r_0,1)$. Take
$r(t):=r_0e^{-t}$ and denote $R(t):=r(t)^N$, $R_*:=r_*^N$,
\begin{equation}\label{EQ_ex-alpha} \alpha(t):=\frac{\mathcal
K_{R(t)}\big(R(t)/R_*\big)-1}{\mathcal
K_{R(t)}\big(R_*\big)+\mathcal K_{R(t)}\big(R(t)/R_*\big)-1}.
\end{equation}
According to Remark~\ref{RM_VillatKernel}, $\alpha(t)$ is well-defined for all
$t\ge0$ and satisfies $0<\alpha(t)<1$. Consider the two positive measures on
$\UC$, $\mu_1:=\alpha(t)\mu,$ $\mu_2:=(1-\alpha(t))\mu$,
where$$\mu:=\frac1N\sum_{j=0}^{N-1}\delta_{2\pi j/N},$$ and $\delta_\theta$
denotes the Dirac measure with atom at the point $\xi=e^{i\theta}$.

The corresponding function $p=p_t\in\ParClass_{r(t)}$ given by
representation~\eqref{EQ_represV} is
$$
p_t(z)=\frac1N\left(\alpha(t)\sum_{j=0}^{N-1}\mathcal
K_{r(t)}\big(e^{-2i\pi
j/N}z\big)+\big(1-\alpha(t)\big)\sum_{j=0}^{N-1}\Big[ 1-\mathcal
K_{r(t)}\big(e^{2i\pi j/N}r(t)/z\big)\Big]\right).
$$
Define $F(z,c):=(1+cz)/(1-cz)$, $c>0$. Decomposing $F((cz)^N,1)$ into partial
fractions, we get $\sum_{j=0}^{N-1}F(ze^{2i\pi j/N},c)=N\,F(z^N,c^N)$. It
follows that $\sum_{j=0}^{N-1}\mathcal K_r(ze^{2i\pi j/N})=N\,\mathcal
K_{r^N}(z^N)$ for all $r\in(0,1)$ and $z\in\mathbb A_r$. In particular, we have
\begin{equation}\label{EQ_ex-p}p_t(z)=\alpha(t)\mathcal
K_{R(t)}\big(z^N\big)+\big(1-\alpha(t)\big)\Big[1-\mathcal
K_{R(t)}\big(R(t)/z^N\big)\Big].\end{equation} On the one hand,
by~\eqref{EQ_ex-alpha} and~\eqref{EQ_ex-p}, $p_t(z_j)=0$ for all $t\ge0$ and
$j=0,1,\ldots, N-1$, where $z_j:=r_*e^{2i\pi j/N}$. On the other hand,
$p_t\in\ParClass_{r(t)}$ for each $t\ge0$ and $t\mapsto p_t(z)$ is smooth in
$t$, for each fixed $z\in\mathbb D^*$.

By Theorem~\ref{TH_semi-char-non-deg}, it follows that $G(z,t):=wp_t(w)$ is an
$L^\infty$-semicomplete weak holomorphic vector field over~$(D_t)$ with ${G(z_j,t)=0}$ for all
$j=0,1,\ldots,N-1$ and all $t\ge0$. According to Theorem~\ref{TH_EF<->sWHVF} this vector field
generates an $L^\infty$-evolution family $(\varphi_{s,t})$ over~$(D_t)$ such that
$\varphi_{s,t}(z_j)=z_j$ for all $j=0,1,\ldots,N-1$ and all $s$ and $t$ satisfying $0\le s\le t$.
\end{example}

\begin{example}
Let $\big((D_t),(\varphi_{s,t})\big)$ be a non-degenerate evolution family of
order~$d\in[1,+\infty]$. Define $\tilde \varphi_{s,t}(z):=r(t)/\varphi_{s,t}(r(s)/z)$ for all
$s\ge0$, all $t\ge s$ and all $z\in D_s$. Then it easy to deduce from Theorem~\ref{TH_1punto} that
$(\tilde\varphi_{s,t})$ is also an $L^d$-evolution evolution family over~$(D_t)$. According to
Theorem~\ref{TH_semi-char-non-deg} the semicomplete weak holomorphic vector field~$G$ corresponding
to~$(\varphi_{s,t})$ is given by $G(w,t)=w[iC(t)+r'(t)p(w,t)/r(t)]$ for a.e. $t\ge0$ and all $w\in
D_t$, where the functions $C$ and $p$ are subjects to conditions (ii)\,--\,(iv) from this theorem.
Then the vector field corresponding to the evolution family $(\tilde\varphi_{s,t})$ is given by
$\tilde G(w,t)=w[-iC(t)+r'(r)\tilde p(w,t)/r(t)]$, where by Remark~\ref{RM_p-tilde}, $$\tilde
p(w,t):=1-p\big(r(t)/w\big)=p[r(t),\widehat{\mu_2^t},\widehat{\mu_1^t}](w)\quad \text{for all
$t\ge0$ and all $w\in D_t$}$$ and the families $(\mu_1^t)_{t\ge0}$ and $(\mu_2^t)_{t\ge0}$ are
defined by $p(\cdot,t)=p[r(t),\mu_1^t,\mu_2^t]$ for all $t\ge0$.
\end{example}

\section{Comments on parametric representation of slit mappings}\label{S_Lebedev}

Let $0<m<1<M<+\infty$ and $A:=\{\zeta:m<|\zeta|<M\}$. Denote by~$\mathbb U(A)$ the class of all
univalent holomorphic functions $f:A\to\Complex^*$ such that $f(1)=1$ and for any closed
curve~$\gamma\in A$ the index $I(f\circ\gamma)$ of the origin w.r.t. the curve $f\circ\gamma$
coincides with~$I(\gamma)$.

The following theorem, generalizing results of Komatu~\cite{Komatu}, Goluzin~\cite{GoluzinM} and Li
En Pir~\cite{LiEnPir} on slit mappings of annuli, is due to Lebedev~\cite{Lebedev}\footnote{Note
that the paper~\cite{Lebedev} is a short communication, so it does not contain any proofs.}.

\begin{theorem}[Lebedev]The following statements hold:\vskip.5ex
\begin{mylist}
\item[\rm (A)] Let $f\in \mathbb U(A)$. Suppose that $\partial f(A)$ consists of two disjoint open
Jordan curves one of which extends to~$\infty$ and the other ends at the origin. Then for any
function $\lambda:[0,+\infty)\to[0,1]$, continuous in~$[0,+\infty)$ except for a finite number of
jump discontinuities and subject to the condition
$\int_0^{+\infty}\lambda(t)\,dt=\int_0^{+\infty}\big(1-\lambda(t)\big)dt=+\infty$, there exist
continuous functions $\kappa_j:[0,+\infty)\to\UC$, $j=1,2$, such that
$$%
f(\zeta)=\lim_{t\to+\infty}f(\zeta,t)\quad \text{for all $\zeta\in A$,}
$$%
where $f=f(\zeta,t)$ is the solution to the following initial value problem
\begin{multline}\label{EQ_Lebedev1}
\frac{\dot f}{f}=\lambda(t)\big[\mathcal K_{r(t)}\big(\kappa_1(t)m_t/f\big)-\mathcal
K_{r(t)}\big(\kappa_1(t)m_t\big)\big]\\
-\big(1-\lambda(t)\big)\big[\mathcal K_{r(t)}\big(\kappa_2(t)^{-1}r(t)f/m_t\big)-\mathcal
K_{r(t)}\big(\kappa_2(t)^{-1}r(t)/m_t\big)\big],\quad f|_{t=0}=\zeta,
\end{multline}
$r(t):=e^{-t}m/M$, and $t\mapsto m_t$ is the solution to
\begin{multline}\label{EQ_Lebedev2}
\frac{\dot m_t}{m_t}=-\lambda(t)\Re\mathcal
K_{r(t)}\big(\kappa_1(t)m_t\big)\\-\big(1-\lambda(t)\big)\big[1-\Re\mathcal
K_{r(t)}\big(\kappa_2(t)^{-1}r(t)/m_t\big)\big],\quad m_t|_{t=0}=m.
\end{multline}
\item[\rm (B)] Given any functions $\lambda:[0,+\infty)\to[0,1]$ and $\kappa_j:[0,+\infty)\to\UC$,
$j=1,2$, continuous in~$[0,+\infty)$ except for a finite number of jump discontinuities, the
problem~\eqref{EQ_Lebedev1}\,--\,\eqref{EQ_Lebedev2} has a unique solution $f(\zeta,t)$ defined for
all $t\ge0$ and all $\zeta\in A$, with $f(\cdot,t)\in\mathbb U(A)$ for any~$t\ge0$.\vskip.5ex

\item[\rm (C)] Under the conditions of~(B), $f(\cdot,t)$ converges in $A$ as $t\to+\infty$ to some
function $f\in\mathbb U(A)$.
\end{mylist}
\end{theorem}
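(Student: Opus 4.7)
For part (B), the plan is to identify the right-hand side of~\eqref{EQ_Lebedev1} with a semicomplete weak holomorphic vector field of order $d=+\infty$ over the canonical domain system $(D_t):=(\mathbb{A}_{r(t)})$. After a suitable rescaling of the dependent variable (so that $f$ lands in the standard annulus $\mathbb{A}_{r(t)}$ rather than in the varying region with inner radius $m_t$), the ODE takes exactly the form $\dot w=w[iC(t)+r'(t)p(w,t)/r(t)]$ prescribed by Theorem~\ref{TH_semi-char-non-deg}, with $p(\cdot,t)\in\ParClass_{r(t)}$ obtained from the Villat-kernel representation~\eqref{EQ_represV} by taking atomic measures: $\mu_1^t=\lambda(t)\delta_{\kappa_1(t)}$ contributes a term $\lambda(t)\mathcal{K}_{r(t)}(\cdot/\kappa_1(t))$ and $\mu_2^t=(1-\lambda(t))\delta_{\kappa_2(t)}$ contributes $(1-\lambda(t))[1-\mathcal{K}_{r(t)}(r(t)\kappa_2(t)/\cdot)]$. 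Comparison with~\eqref{EQ_Lebedev1}, together with the symmetry from Remark~\ref{RM_p-tilde} to reconcile arguments of the form $\kappa_1(t)m_t/f$ versus $f/\xi$, identifies the values $\mathcal{K}_{r(t)}(\kappa_1(t)m_t)$ and $\mathcal{K}_{r(t)}(\kappa_2(t)^{-1}r(t)/m_t)$ with the imaginary correction $iC(t)$ and the additive term $\dot m_t/m_t$ produced by the rescaling; indeed~\eqref{EQ_Lebedev2} is precisely the bookkeeping ODE forcing the rescaled $w$ to remain in $\mathbb{A}_{r(t)}$. Conditions (ii)--(iv) of Theorem~\ref{TH_semi-char-non-deg} are immediate from the piecewise continuity (hence boundedness and measurability) of $\lambda,\kappa_1,\kappa_2$, so Theorem~\ref{TH_EF<->sWHVF}(B) produces an $L^\infty$-evolution family $(\varphi_{s,t})$ from which $f(\cdot,t)$ is recovered; univalence and index preservation, hence membership in $\mathbb{U}(A)$, then follow from Lemma~\ref{LM_evol-fam-classM}(iii)--(iv).

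For part (C), the images $f(A,t)$ sit in $\mathbb{D}^*$ with $r(t)\to0$, and as $(\varphi_{s,t})$ is an evolution family, $f(A,t)$ is monotonically exhausting a domain; the family $\{f(\cdot,t)\}$ is therefore normal in $A$. Using the pointwise estimate on $|p_t(w)|$ from Lemma~\ref{LM_clV_est} together with $|r'(t)/r(t)|=1$ one controls $|\dot f/f|$ on compact subsets of $A$ and concludes that $t\mapsto f(\zeta,t)$ is Cauchy for each $\zeta\in A$; Vitali's theorem then yields a locally uniform limit $f\in\Hol(A,\Complex)$, with univalence preserved by Hurwitz's theorem and the index condition preserved by continuity of the winding number under locally uniform convergence, so $f\in\mathbb{U}(A)$.

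Part (A), the realization of a given slit mapping $f\in\mathbb{U}(A)$ as such a limit, is the main obstacle. I would parametrize the two Jordan boundary arcs of $f(A)$---the inner one terminating at $0$ and the outer one going to $\infty$---by monotone capacity-type parameters, and splice them into a single time $t\in[0,+\infty)$ whose infinitesimal allocation between the two arcs is encoded by the prescribed $\lambda(t)$, with the time scale chosen so that the conformal modulus of the partially uncovered image domain matches that of $\mathbb{A}_{r(t)}$, i.e.\ $r(t)=e^{-t}m/M$. Under this parametrization the tips of the not-yet-erased portions of the slits correspond, after the standardizing rescaling by $m_t$, to two points on the unit circle, which are declared to be $\kappa_1(t)$ and $\kappa_2(t)$; differentiating the resulting family of conformal uniformizers in $t$ must reproduce~\eqref{EQ_Lebedev1}, which amounts to the one-slit boundary variation formula in an annulus written via a single Villat kernel atom. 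The main technical difficulties are proving that $\kappa_j$ has at most finitely many jumps (which should be inherited from the finite number of smooth pieces of the two Jordan arcs) and controlling the modulus function $t\mapsto r(t)$ under the slit erasure; both can be handled by the Carathéodory kernel theorem for doubly connected domains together with standard distortion estimates for the Villat kernel, in the spirit of the classical single-slit Loewner step.
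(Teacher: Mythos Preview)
The paper does not give a self-contained proof of Lebedev's theorem. It states the theorem as a result from~\cite{Lebedev} (noting that \cite{Lebedev} itself is a short communication without proofs), and then in the discussion following the statement it explains only how part~(B) fits into the framework of Theorems~\ref{TH_EF<->sWHVF} and~\ref{TH_semi-char-non-deg}. Parts~(A) and~(C) are not addressed at all in the paper.

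For part~(B), your approach is essentially identical to the paper's: one first shows that the auxiliary ODE~\eqref{EQ_Lebedev2} for $m_t$ has a global solution with $r(t)<m_t<1$ (the paper does this by the same comparison argument used in the sufficiency proof of Theorem~\ref{TH_semi-char-non-deg}), then performs the change of variables $z:=\zeta/M$, $w:=r(t)f/m_t$ to bring~\eqref{EQ_Lebedev1} into the standard form $\dot w=w[iC(t)+r'(t)p(w,t)/r(t)]$ with $p(\cdot,t)\in\ParClass_{r(t)}$ given by atomic measures, and finally invokes Theorem~\ref{TH_semi-char-non-deg} and Theorem~\ref{TH_EF<->sWHVF}(B). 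One bookkeeping point: in the paper's change of variables the roles of the two measures are swapped relative to what you wrote, namely $p(\cdot,t)=p[r(t),(1-\lambda(t))\delta_{\kappa_2(t)},\,\lambda(t)\delta_{\kappa_1(t)}]$; this is because the substitution $w=r(t)f/m_t$ exchanges the inner and outer boundary components.

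Since the paper proves nothing about~(A) and~(C), there is no comparison to make there. That said, your sketch for~(C) has a gap: the estimate from Lemma~\ref{LM_clV_est} does not give an integrable bound on $|\dot f/f|$ as $t\to+\infty$, because the Villat-kernel bound blows up near $\partial\mathbb A_{r(t)}$ and you have no uniform control on how close $\varphi_{0,t}(z)$ stays to the boundary of $D_t$; convergence in~(C) requires a genuinely different argument (monotonicity of image domains, kernel convergence, or the Loewner-chain machinery to be developed in the sequel paper). For~(A), note that the conclusion asserts \emph{continuous} $\kappa_j$, not merely piecewise continuous, and there is no smoothness hypothesis on the boundary Jordan curves; continuity of the driving terms comes from Carath\'eodory kernel convergence for annuli, not from any decomposition into smooth pieces.
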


In this section we would like to explain how the dynamics of
system~\eqref{EQ_Lebedev1}\,--\,\eqref{EQ_Lebedev2} is connected to the evolution families we
consider in this paper.

First of all, take $r(t):=e^{-t}m/M$ and let $D_t:=\mathbb A_{r(t)}$ for all $t\ge0$. Clearly,
$(D_t)$ is a non-degenerate canonical domain system of order~$d=\infty$. Since
from~\eqref{EQ_Lebedev2} it follows that $(d/dt)\log m_t\le\mathcal K_{r(t)}(r(t)/m_t)-1$ and
$(d/dt)\log(r(t)/m_t)\le \mathcal K_{r(t)}(m_t)-1$ whenever solution ${t\mapsto m_t}$
to~\eqref{EQ_Lebedev2} exists, we can conclude, arguing in the same way as in the proof of the
sufficiency statement of Theorem~\ref{TH_semi-char-non-deg} on page~\pageref{PROOF}, that for any
{\it measurable} functions ${\lambda:[0,+\infty)\to[0,1]}$ and ${\kappa_j:[0,+\infty)\to\UC}$,
$j=1,2$, the initial value problem has a unique solution ${[0,+\infty)\ni t\mapsto m_t}$ and that
$r(t)< m_t<1$ for all $t\ge0$. Now with the change of variables $z:=\zeta/M$, $w:=r(t)f/m_t$, the
initial value problem~\eqref{EQ_Lebedev1} is equivalent to
\begin{align*}
&\dot w=G(w,t),\quad w|_{t=0}=z,\qquad
\text{where}\quad G(w,t):=w[iC(t)-p(w,t)],\\
C(t)&:=\big(1-\lambda(t)\big)\Im \mathcal
K_{r(t)}\big(\kappa_2(t)^{-1}r(t)/m_t\big)-\lambda(t)\Im\mathcal
K_{r(t)}\big(m_t\kappa_1(t)\big),\\
p(\cdot,t)&:=p\big[r(t),\,\big(1-\lambda(t)\big)\nu_2^t,~\lambda(t)\nu_1^t\big]\quad \text{ for all
$t\ge0$ and all $w\in D_t$},
\end{align*}
and $\nu_j^t$, $j=1,2$, $t\ge0$, stands for the Dirac measure on~$\UC$ with the atom at
$\kappa_j(t)$.

It is easy to see that functions $C$ and $p$ satisfy conditions (ii)\,--\,(iv) from
Theorem~\ref{TH_semi-char-non-deg} with $d=+\infty$. Hence $G$ is a semicomplete weak holomorphic
vector fields of order~$+\infty$ and $f(\zeta,t)=m_t\varphi_{0,t}(\zeta/M)/r(t)$ for all $t\ge0$,
where $(\varphi_{s,t})$ is the $L^{\infty}$-evolution family over~$\big(\mathbb A_{r(t)}\big)$
generated by~$G$ in the sense of Theorem~\ref{TH_EF<->sWHVF}\,-(iv). Therefore we conclude that
dynamics of system~\eqref{EQ_Lebedev1}\,--\,\eqref{EQ_Lebedev2} can be regarded as a very special
case of dynamics of evolution families we consider in this paper and that statement~(B) of
Lebedev's theorem follows from our results. In particular, equation~\eqref{EQ_genGolKom} with the
vector field $G$ admitting the representation given in Theorem~\ref{TH_semi-char-non-deg} is a
generalization of the Komatu equation~\cite{Komatu, GoluzinM} (known also as the
Goluzin\,--\,Komatu equation),  since \eqref{EQ_Lebedev1}\,--\,\eqref{EQ_Lebedev2} reduces to the
latter equation when $\lambda\equiv0$ and $m=1$.

\end{document}